\documentclass[preprint]{imsart}
\arxiv{arXiv:1605.07129}

\usepackage{url}
\usepackage{verbatim}
\usepackage{paralist}
\RequirePackage[OT1]{fontenc}
\RequirePackage[numbers]{natbib}
\RequirePackage{hypernat}
\usepackage{mathscinet}
\usepackage[utf8]{inputenc} 
\usepackage{textcomp} 
\usepackage{graphicx}  
\usepackage{flafter}  
\usepackage{natbib} 
\usepackage{amsmath,amssymb,amsthm}  
\usepackage{etoolbox} 
\usepackage{bm}  
\usepackage[pdftex,bookmarks,colorlinks,breaklinks]{hyperref}  
\usepackage{url}
\usepackage{subfig}
\usepackage{dsfont}

\usepackage{verbatim}

\usepackage{wrapfig}
\usepackage{enumerate}

\hypersetup{linkcolor=blue,citecolor=black,filecolor=cyan,urlcolor=blue} 

\startlocaldefs
\numberwithin{equation}{section}
\theoremstyle{plain}
\newtheorem{definition}{Definition}[section]
\newtheorem{theorem}{Theorem}[section]
\newtheorem*{thm}{Theorem}
\newtheorem{corollary}{Corollary}[section]

\newtheorem{lemma}{Lemma}[section]
\newtheorem{assumption}{Assumption}
\newtheorem{remark}{Remark}

\newtheorem{fact}{Fact}[section]

\allowdisplaybreaks

\endlocaldefs

\newcommand{\dotp}[2]{\left\langle#1,#2\right\rangle}

\newcommand{\m}{\mathcal}
\newcommand{\mb}{\mathbb}
\newcommand\argmin{\mbox{argmin}}
\newcommand{\sign}{\mbox{sign}}

\newcommand{\tr}{\mbox{tr\,}}
\newcommand{\mx}{\mbox{\footnotesize{max}\,}}
\newcommand{\mn}{\mbox{\footnotesize{min}\,}}
\newcommand{\rank}{\mathrm{\,rank}}
\def\r{\right}
\def\l{\left}
\newcommand{\eps}{\varepsilon}
\newcommand{\var}{\mbox{Var}}
\newcommand{\wh}{\widehat}

\begin{document}

\begin{frontmatter}
\title{
Sub-Gaussian estimators of the mean of a random matrix with heavy-tailed entries}
\runtitle{
Estimators of the mean of a random matrix}
\begin{aug}
\author{\fnms{Stanislav} \snm{Minsker}\thanksref{t2}\ead[label=e1]{minsker@usc.edu}}

 \thankstext{t2}{Supported in part by the National Science Foundation grant DMS-1712956.}
\runauthor{S. Minsker}

\affiliation{University of Southern California}
\address{
Stanislav Minsker \\
Department of Mathematics, \\
University of Southern California, \\
Los Angeles, CA 90089 \\
\printead{e1}
}
\end{aug}
\maketitle
\begin{abstract}
Estimation of the covariance matrix has attracted a lot of attention of the statistical research community over the years, partially due to important applications such as Principal Component Analysis. 
However, frequently used empirical covariance estimator, and its modifications, is very sensitive to the presence of outliers in the data. 
As P. Huber wrote \cite{huber1964robust}, ``...This raises a question which could have been asked already by Gauss, but which was, as far as I know, only raised a few years ago (notably by Tukey): what happens if the true distribution deviates slightly from the assumed normal one? As is now well known, the sample mean then may have a catastrophically bad performance...'' 
Motivated by Tukey's question, we develop a new estimator of the (element-wise) mean of a random matrix, which includes covariance estimation problem as a special case. 
Assuming that the entries of a matrix possess only finite second moment, this new estimator admits sub-Gaussian or sub-exponential concentration around the unknown mean in the operator norm. 
We explain the key ideas behind our construction, and discuss applications to covariance estimation and matrix completion problems.

\end{abstract}

\begin{keyword}[class=MSC]
\kwd[Primary ]{60B20, 62G35}
\kwd[; secondary ]{62H12}
\end{keyword}

\begin{keyword}
\kwd{random matrix}
\kwd{heavy tails}
\kwd{concentration inequality}
\kwd{covariance estimation}
\kwd{matrix completion.}
\end{keyword}

\end{frontmatter}

\section{Introduction}

Let $Y_1,\ldots,Y_n\in \mb C^{d_1\times d_2}$ be a sequence of independent random matrices such that all their entries have finite second moments: $\mb E\l|(Y_j)_{k,l}\r|^2<\infty$ for all $1\leq j\leq n, \ 1\leq k\leq d_1, \ 1\leq l\leq d_2$.  
Let $\mb EY_1,\ldots,\mb EY_n\in \mb C^{d_1\times d_2}$ be the expectations evaluated element-wise, meaning that $\l(\mb EY_j\r)_{k,l}=\mb E \l(Y_j\r)_{k,l}$. 
The goal of this paper is to construct and study estimators of 
$\mb E \bar Y:=\mb E\l[\frac{1}{n}\sum_{j=1}^n Y_j\r]$ under minimal assumptions on the distributions of $Y_1,\ldots,Y_n$. 
In particular, we are interested in the estimators that admit tight non-asymptotic bounds and exponential deviation inequalities without imposing any additional assumptions (besides finite second moments) on $Y_1,\ldots,Y_n$. 
For example, if $Y_j=Z_j Z_j^T$, where $Z_1,\ldots,Z_n\in \mb R^d$ are i.i.d. copies of a random vector $Z$ such that $\mb EZ=0$, $\mb E \l[ZZ^T\r]=\Sigma$ and $\mb E \|Z\|_2^4<\infty$, formulated problem is reduced to covariance estimation (here and in what follows, $\|\cdot\|_2$ and $\dotp{\cdot}{\cdot}$ stand for the usual Euclidean norm and Euclidean dot product respectively). 

Techniques developed in this paper have direct connection to several problems in high-dimensional statistics and statistical learning theory. 
In the past decade, these fields have seen numerous breakthroughs in structural estimation, concerned with a task of recovering a high-dimensional parameter that belongs to a set with ``simple'' structure from a small number of measurements. 
Examples include sparse linear regression, low-rank matrix recovery and structured covariance estimation. 
However, theoretical recovery guarantees for popular techniques (e.g., $\ell_1$ and nuclear norm minimization) usually require strong assumptions on the underlying probability distribution, such as sub-Gaussian or bounded noise. 
What happens with the performance of the algorithms when these conditions are violated, which is the case for many real data sets modeled by heavy-tailed distributions? Can the assumptions be weakened without sacrificing the quality of theoretical guarantees? 
We look at examples where the answer is positive, and describe modifications of existing techniques that allow to achieve the improvements.

\subsection{Overview of the previous work}

Let us begin by briefly discussing a scalar version of the problem investigated in this paper.  
Assume that $X_1,\ldots,X_n\in \mb R$ are i.i.d. copies of $X$, where $\mb EX^2<\infty$. 
One of the fundamental problems in statistics is to construct the confidence interval for the unknown mean $\mb EX$ based on a given sample. 
A surprising fact (dating back to \cite{Nemirovski1983Problem-complex00} where the ``median of means'' estimator was introduced, along with \cite{alon1996space} and \cite{jerrum1986random}) is that it's possible to construct a non-asymptotic confidence intervals $\hat I_n(\delta)$ with coverage probability $1-\delta$ (meaning that $\Pr\l(\mb EX\in \hat I_n(\delta)\r)\geq 1-\delta$ for given $n$ and $\delta$) and ``nearly optimal'' length 
$\l|\hat I_n(\delta)\r|\leq L\sqrt{\var(X)}\sqrt{\frac{\log(e/\delta)}{n}}$, where $L>0$ is an absolute constant. 
An in-depth study of this and closely related questions was performed in \cite{catoni2012challenging,devroye2015sub} based on two different approaches. 
Note that the center of any such confidence interval is a point estimator $\hat \mu:=\hat \mu(X_1,\ldots,X_n,\delta)$ that satisfies 
\[
\Pr\l( \l| \hat \mu - \mb EX \r|\geq L\sqrt{\var(X)}\sqrt{\frac{\log(e/\delta)}{n}} \r)\leq \delta.
\] 
Because the only assumption on $X$ is the existence of a second moment, it is natural to call such an estimator ``robust''
\footnote{For the classical treatment of robust estimators based on the notion of a breakdown point, we refer the reader to \cite{Huber2009Robust-statisti00}.}: 
it admits strong deviation bounds even for the heavy-tailed distributions that can be used to model outliers in the data. 
Ideas behind these results have also been extended to empirical risk minimization methods \cite{lerasle2011robust,brownlees2015empirical} which cover a wide range of statistical applications. 
Let us emphasize that the aforementioned estimators do not require any assumptions on the ``shape'' of the distribution, such as unimodality or elliptical symmetry. 
 
Generalizations of univariate results to the case of random vectors and random matrices are not straightforward since element-wise deviation inequalities do not always translate into desired bounds.
In some cases, element-wise bounds yield inequalities for the ``wrong'' norm: for example, estimating each entry of the covariance matrix results in a deviation inequality for the Frobenius norm, while we are frequently interested in the bounds for the operator norm that can be much smaller. 
An approach which often yields ``dimension - free'' bounds was proposed in \cite{hsu2013loss} and \cite{minsker2015geometric} (using generalizations of the median in higher dimensions); however, to the best of our knowledge, results of these papers are still not sufficient to obtain deviation guarantees in the operator norm that we are mainly interested in.
Under more restrictive assumptions on the sequence of random matrices $Y_1,\ldots,Y_n$ (such as $\|Y_j\|\leq M$ almost surely for some fixed $M>0$, $j=1,\ldots,n$, where $\|\cdot\|$ stands for the operator norm), behavior of the sample mean $\bar Y=\frac{1}{n}\sum_{j=1}^n Y_j$ has been analyzed with the help of matrix concentration inequalities \cite{Ahlswede2002Strong-converse00,oliveira2009concentration,tropp1}.

A closely related covariance matrix estimation problem has been extensively studied in the past decades. 
A comprehensive review is beyond the scope of this introduction, so we will just mention few classical results and more recent work related to the current line of research. 
Statistical properties of the sample covariance matrix for Gaussian and sub-Gaussian observations have been investigated in detail, see \cite{koltchinskii2017concentration,koltchinskii2016new,vershynin2010introduction,cai2010optimal,cai2016} and references therein; under weaker moment assumptions, sample covariance estimator has been studied in \cite{srivastava2013covariance}. 
Some popular robust estimators of scatter are discussed in \cite{hubert2008high}, including the Minimum Covariance
Determinant (MCD) estimator and the Minimum Volume Ellipsoid estimator (MVE). 
However, rigorous results for these estimators are available only for elliptically symmetric distributions; see \cite{butler1993asymptotics} for results on MCD and \cite{davies1992asymptotics} for results on MVE. 
Popular Maronna's \cite{maronna1976} and Tyler's \cite{tyler1987distribution,zhang2016marvcenko} M-estimators of scatter also admit theoretical guarantees for the family of elliptically symmetric distributions, but we are unaware of any results extending beyond this case.

Recent papers of O. Catoni \cite{catoni2016pac} and I. Guilini \cite{giulini2015pac},  Fan et al. \cite{fan2016eigenvector} are closest in spirit to our work. 
For instance, in \cite{catoni2016pac} author constructs a robust estimator of the Gram matrix of a random vector $Z\in \mb R^d$ (as well as its covariance matrix) via estimating the quadratic form $\mb E \dotp{Z}{u}^2$ uniformly over $\|u\|_2=1$, and obtains error bounds for the operator norm. 
The latter (univariate) estimators for the quadratic form are based on the fruitful ideas originating in \cite{catoni2012challenging}. 
However, results of these works can not be straightforwardly extended beyond covariance estimation, and are obtained under more stringent (compared to the present paper) assumptions on the underlying distribution (such as a known upper bound on the kurtosis of $\dotp{Z}{u}^2$ for any $u$ of norm $1$). 
In \cite{fan2016eigenvector}, authors obtain error bounds for norms other than the operator norm which is the main focus of the present paper. 

Finally, let us mention that the problem of robust matrix recovery (that is discussed as an example below) has also received attention recently: for instance, the work \cite{candes2011robust,klopp2014robust} investigates robust matrix completion under the ``low rank + sparse'' model. 
In \cite{fan2016robust}, authors study low-rank matrix recovery under the assumption that the additive noise has only $(2+\eps)$ moments, and obtain strong results via truncation argument. 
We propose a different approach based on general techniques developed in this paper and achieve similar results for the matrix completion problem while requiring only the finite variance of the noise.

\subsection{Organization of the paper}
\label{section:organization}

Section \ref{section:prelim} contains definitions, notation and background material. 
Our main results are introduced in section \ref{section:main}.  
After presenting core results, we discuss applications to covariance estimation and low-rank matrix completion in section \ref{section:examples}, and illustrate the role of various quantities involved in the general bounds through these examples. Sections \ref{section:variance} and \ref{section:shift} discuss adaptation to unknown parameters that appear in our construction, and contain longer proofs. 

Appendix contains proofs of several technical lemmas and results that were omitted in the main text.

\section{Preliminaries}
\label{section:prelim}

In this section, we introduce main notation and recall several useful facts from linear algebra, matrix analysis and probability theory that we rely on in the subsequent exposition. 

\subsection{Definitions and notation}

Given $A\in \mb C^{d_1\times d_2}$, let $A^\ast\in \mb C^{d_2\times d_1}$ be the Hermitian adjoint of $A$. 
If $A$ is self-adjoint, we will write $\lambda_{\mx}(A)$ and $\lambda_{\mn}(A)$ for the largest and smallest eigenvalues of $A$. 
Next, we will introduce the matrix norms used in the paper. 

Everywhere below, $\|\cdot\|$ stands for the operator norm $\|A\|:=\sqrt{\lambda_{\mx}(A^\ast A)}$. 
If $d_1=d_2=d$, we denote by $\tr A$ the trace of $A$.
Next, for $A\in \mb C^{d_1\times d_2}$, the nuclear norm $\|\cdot\|_1$ is defined as 
$\|A\|_1=\tr(\sqrt{A^*A})$, where $\sqrt{A^*A}$ is a nonnegative definite matrix such that $(\sqrt{A^*A})^2=A^\ast A$. 
The Frobenius (or Hilbert-Schmidt) norm is $\|A\|_{\mathrm{F}}=\sqrt{\tr(A^\ast A)}$, and the associated inner product is 
$\dotp{A_1}{A_2}=\tr(A_1^\ast A_2)$. 
Finally, set $\|A\|_{\max}:=\sup_{i,j}|a_{i,j}|$. 
For $Y\in \mb C^d$, $\l\| Y \r\|_2$ stands for the usual Euclidean norm of $Y$. 

Given two self-adjoint matrices $A$ and $B$, we will write $A\succeq B \ (\text{or }A\succ B)$ iff $A-B$ is nonnegative (or positive) definite.

Given a sequence $Y_1,\ldots,Y_n$ of random matrices, $\mb E_j[\,\cdot \,]$ will stand for the conditional expectation 
$\mb E[\,\cdot\,|Y_1,\ldots,Y_{j}]$.

Finally, for $a,b\in \mb R$, set $a\vee b:=\max(a,b)$ and $a\wedge b:=\min(a,b)$. 

\subsection{Tools from linear algebra}

In this section, we collect several facts from linear algebra, matrix analysis and probability theory that are frequently used in our arguments. 

\begin{definition}
\label{matrix-function}
Given a real-valued function $f$ defined on an interval $\mb T\subseteq \mb R$ and a self-adjoint $A\in \mb C^{d\times d}$ with the eigenvalue decomposition 
$A=U\Lambda U^\ast$ such that $\lambda_j(A)\in \mb T,\ j=1,\ldots,d$, define $f(A)$ as 
$f(A)=Uf(\Lambda) U^\ast$, where 
\[
f(\Lambda)=f\l( \begin{pmatrix}
\lambda_1 & \,  & \,\\
\, & \ddots & \, \\
\, & \, & \lambda_d
\end{pmatrix} \r)
=\begin{pmatrix}
f(\lambda_1) & \,  & \,\\
\, & \ddots & \, \\
\, & \, & f(\lambda_d)
\end{pmatrix}.
\] 
\end{definition}
\noindent 
Additionally, we will often use the following facts: 
\begin{fact}
\label{fact:01}
Let $A\in \mb C^{d\times d}$ be a self-adjoint matrix, and $f_1, \ f_2$ be two real-valued functions such that $f_1(\lambda_j)\geq f_2(\lambda_j)$ for $j=1,\ldots,d$. 
Then $f_1(A)\succeq f_2(A)$. 
\end{fact}

\begin{fact}
\label{fact:02}
Let $A,B\in \mb C^{d\times d}$ be two self-adjoint matrices such that $A\succeq B$. 
Then $\lambda_j(A)\geq \lambda_j(B), \ j=1,\ldots, d$, where $\lambda_j(\cdot)$ stands for the $j$-th largest eigenvalue.   
Moreover, $\tr e^A\geq \tr e^B$.
\end{fact}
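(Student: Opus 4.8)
The plan is to prove the eigenvalue inequality first, via the Courant--Fischer min-max characterization, and then to read off the trace bound as an immediate consequence.

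First I would recall that for any self-adjoint $C\in\mb C^{d\times d}$ the $j$-th largest eigenvalue admits the variational representation
\[
\lambda_j(C)=\max_{\substack{V\subseteq \mb C^{d}\\ \dim V=j}}\ \min_{\substack{x\in V\\ \|x\|_2=1}}\dotp{x}{Cx},
\]
the outer maximum being over all $j$-dimensional subspaces $V$. The hypothesis $A\succeq B$ means precisely that $\dotp{x}{Ax}\geq\dotp{x}{Bx}$ for every $x\in\mb C^{d}$. Hence, for each fixed $j$-dimensional subspace $V$ one has $\min_{x\in V,\,\|x\|_2=1}\dotp{x}{Ax}\geq \min_{x\in V,\,\|x\|_2=1}\dotp{x}{Bx}$, and taking the maximum over all such $V$ yields $\lambda_j(A)\geq\lambda_j(B)$ for each $j=1,\ldots,d$. (Equivalently, one may simply invoke Weyl's monotonicity theorem.)

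For the second assertion I would apply Definition \ref{matrix-function} with $f(x)=e^{x}$: the eigenvalues of $e^{A}$ are exactly $e^{\lambda_1(A)},\ldots,e^{\lambda_d(A)}$, and similarly for $e^{B}$. Since $x\mapsto e^{x}$ is increasing, the first part gives $e^{\lambda_j(A)}\geq e^{\lambda_j(B)}$ for every $j$; summing over $j$ and using that the trace equals the sum of the eigenvalues,
\[
\tr e^{A}=\sum_{j=1}^{d}e^{\lambda_j(A)}\ \geq\ \sum_{j=1}^{d}e^{\lambda_j(B)}=\tr e^{B}.
\]

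I do not expect a genuine obstacle here; the single point worth flagging is that the trace inequality should \emph{not} be argued through a claim of the form $e^{A}\succeq e^{B}$ (the matrix exponential is not operator monotone, and Fact \ref{fact:01} only compares two functions of the \emph{same} matrix), but rather through the scalar comparison of the ordered eigenvalues supplied by the min-max principle, exactly as above.
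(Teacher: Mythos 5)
The paper states Fact \ref{fact:02} without proof, treating it as classical (it is Weyl's monotonicity theorem together with the spectral-mapping computation of $\tr e^A$). Your argument via Courant--Fischer and then the scalar monotonicity of $e^x$ applied to the ordered eigenvalues is exactly this standard proof and is correct, and your caveat that one must not argue via $e^A\succeq e^B$ is well taken, since the matrix exponential is indeed not operator monotone.
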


\begin{fact}
\label{fact:03}
Matrix logarithm is operator monotone: if $A\succ 0, \ B\succ 0$ and $A\succeq B$, then $\log(A)\succeq \log(B)$. 
\end{fact}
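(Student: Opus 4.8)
The plan is to reduce Fact~\ref{fact:03} to two elementary ingredients: the operator anti-monotonicity of the matrix inverse, and the integral representation of the logarithm, and then combine them.

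First I would prove the auxiliary claim that if $A\succeq B\succ 0$ then $B^{-1}\succeq A^{-1}$. Since $A\succ 0$, conjugating the inequality $A\succeq B$ by $A^{-1/2}$ (which preserves the semidefinite order) gives $I\succeq C$, where $C:=A^{-1/2}BA^{-1/2}\succ 0$. By Fact~\ref{fact:02} this means $\lambda_{\mx}(C)\leq 1$, hence the eigenvalues of $C^{-1}$ are all at least $1$, i.e. $C^{-1}\succeq I$; but $C^{-1}=A^{1/2}B^{-1}A^{1/2}$, so conjugating back by $A^{-1/2}$ yields $B^{-1}\succeq A^{-1}$.

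Next I would record the scalar identity, valid for every $x>0$,
\[
\log x=\int_0^\infty\l(\frac{1}{1+s}-\frac{1}{x+s}\r)\,ds,
\]
which follows by direct integration since the integrand has antiderivative $\log\frac{1+s}{x+s}$, with value $0$ as $s\to\infty$ and $-\log x$ at $s=0$. Applying Definition~\ref{matrix-function} in the eigenbasis of $A$ — note that $(A+sI)^{-1}$ and $A$ share eigenvectors, so the integral can be carried out coordinatewise on the diagonal part, with absolute convergence coming from the $O(s^{-2})$ decay of each integrand entry — this lifts to
\[
\log A=\int_0^\infty\l(\frac{1}{1+s}\,I-(A+sI)^{-1}\r)\,ds
\]
for every self-adjoint $A\succ 0$.

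To finish: given $A\succeq B\succ 0$, for each $s\geq 0$ we have $A+sI\succeq B+sI\succ 0$, so the first step gives $(A+sI)^{-1}\preceq (B+sI)^{-1}$ and therefore $\frac{1}{1+s}I-(A+sI)^{-1}\succeq \frac{1}{1+s}I-(B+sI)^{-1}$. Integrating this pointwise-in-$s$ inequality over $[0,\infty)$ preserves the order (the integrand difference is positive semidefinite for every $s$, hence so is its integral, as one sees by pairing with an arbitrary vector), and by the representation of $\log$ above this is exactly $\log A\succeq \log B$. The only step requiring genuine care is the justification of the matrix integral representation — convergence of $\int_0^\infty\l(\frac{1}{1+s}I-(A+sI)^{-1}\r)ds$ and the legitimacy of evaluating it in the eigenbasis of $A$; everything else is a short computation. (One could instead write $\log x=\lim_{r\downarrow 0}\frac{x^r-1}{r}$ as an increasing limit and invoke operator monotonicity of $x\mapsto x^r$ for $r\in(0,1)$, but the latter is itself proved via the same inverse-monotonicity/integral device, so the route above is more self-contained.)
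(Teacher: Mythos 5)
Your proof is correct and complete. The paper supplies no argument of its own for Fact~\ref{fact:03} — it simply cites Bhatia's \emph{Matrix Analysis} — so there is no in-paper proof to compare against; the route you give (operator anti-monotonicity of the inverse, established by conjugating $A\succeq B$ by $A^{-1/2}$, combined with the integral representation $\log x=\int_0^\infty\l(\frac{1}{1+s}-\frac{1}{x+s}\r)ds$ lifted to matrices) is precisely the standard textbook argument found in that reference. Your verification is careful at exactly the right spots: the reduction to $I\succeq C\succ 0$ and its inversion via Fact~\ref{fact:02}, the scalar antiderivative check, the lift of the integral to $\log A$ in the fixed eigenbasis of $A$ with the $O(s^{-2})$ decay guaranteeing convergence, the use of inverse anti-monotonicity for each $s$ applied to $A+sI\succeq B+sI$, and the observation that integrating a pointwise semidefinite inequality preserves the semidefinite order. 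The closing parenthetical correctly identifies the alternative route through $\log x=\lim_{r\downarrow 0}(x^r-1)/r$ and operator monotonicity of $x\mapsto x^r$ for $r\in(0,1)$, and is right that this ultimately rests on the same inverse-monotonicity and integral device, so your direct path is the more self-contained one.
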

\begin{proof}
See \cite{bhatia1997matrix}. 
\end{proof}

\begin{fact}
\label{fact:04}
Let $A\in \mb C^{d\times d}$ be a self-adjoint matrix. 
Then $I+A+\frac{A^2}{2}\succ 0$. 
Moreover, 
\[
-\log \l( I+A+\frac{A^2}{2} \r) \preceq \log\l( I-A+\frac{A^2}{2} \r).
\] 
\end{fact}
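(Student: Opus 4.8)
The plan is to reduce the whole statement to a one-variable inequality via the spectral calculus of Definition \ref{matrix-function}. The key observation is that every matrix appearing in the statement is a function of the single self-adjoint matrix $A$: hence they are all simultaneously diagonalized by the unitary $U$ in the decomposition $A = U\Lambda U^\ast$, and any $\preceq$-relation between them is equivalent, after conjugating by $U^\ast$, to the corresponding pointwise relation between the scalar functions evaluated at the eigenvalues of $A$. This is precisely what Fact \ref{fact:01} lets us exploit.

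First, for the positivity claim, I would look at the scalar function $g(x) = 1 + x + \frac{x^2}{2} = \frac{1}{2}\big((x+1)^2 + 1\big)$, which satisfies $g(x) \ge \frac{1}{2} > 0$ for all real $x$. Since the eigenvalues of $A$ are real, Fact \ref{fact:01} applied to $g$ and the constant function $\tfrac12$ gives $I + A + \frac{A^2}{2} = g(A) \succeq \frac{1}{2} I \succ 0$. Replacing $A$ by $-A$ shows $I - A + \frac{A^2}{2} \succ 0$ as well, so both logarithms in the statement are well defined.

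For the main inequality, I would rewrite the desired relation (adding $\log(I+A+\frac{A^2}{2})$ to both sides, which is legitimate for any matrix inequality) as
\[
\log\Big(I + A + \tfrac{A^2}{2}\Big) + \log\Big(I - A + \tfrac{A^2}{2}\Big) \succeq 0.
\]
The left-hand side equals $h(A)$ for $h(x) = \log\big(1 + x + \tfrac{x^2}{2}\big) + \log\big(1 - x + \tfrac{x^2}{2}\big)$, since each summand is a genuine function of $A$ by the positivity established above, and sums of functions of $A$ are computed eigenvalue by eigenvalue in the basis $U$. By Fact \ref{fact:01} it therefore suffices to check $h(x) \ge 0$ for every real $x$, i.e. $\big(1 + x + \tfrac{x^2}{2}\big)\big(1 - x + \tfrac{x^2}{2}\big) \ge 1$; and this is the elementary identity
\[
\Big(1 + \tfrac{x^2}{2} + x\Big)\Big(1 + \tfrac{x^2}{2} - x\Big) = \Big(1 + \tfrac{x^2}{2}\Big)^2 - x^2 = 1 + \frac{x^4}{4} \ge 1.
\]

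I do not expect a genuine obstacle here: the only thing one must notice is that all matrices in sight commute because they are functions of $A$ alone, which dissolves every non-commutative difficulty and collapses the problem to the displayed algebraic identity. The single point worth stating explicitly is the identity $\log(f_1(A)) + \log(f_2(A)) = (\log f_1 + \log f_2)(A)$, which is immediate from the simultaneous diagonalization in Definition \ref{matrix-function}.
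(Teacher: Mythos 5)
Your proof is correct and takes essentially the same route as the paper's: both reduce the matrix inequalities to the corresponding scalar inequalities via the spectral functional calculus of Definition \ref{matrix-function} and Fact \ref{fact:01}. The paper simply asserts the reduction and leaves the scalar inequality $-\log(1+t+t^2/2)\le\log(1-t+t^2/2)$ unverified, whereas you spell it out with the identity $\bigl(1+x+\tfrac{x^2}{2}\bigr)\bigl(1-x+\tfrac{x^2}{2}\bigr)=1+\tfrac{x^4}{4}\ge 1$.
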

\begin{proof}
In view of the definition of a matrix function, the first claim follows from scalar inequality $1+t+t^2/2>0$ for $t\in\mb R$. 
Similarly, the second relation follows from the inequality $-\log(1+t+t^2/2)\leq \log(1-t+t^2/2)$ for $t\in\mb R$.
\end{proof}

\begin{fact}[Lieb's concavity theorem]
\label{fact:05}
Given a fixed self-adjoint matrix $H$, the function
\[
A\mapsto \tr\exp\l(H+\log(A)\r)
\]
is concave on the cone of positive definite matrices.
\end{fact}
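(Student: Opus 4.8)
This is Lieb's concavity theorem, for which no genuinely elementary proof is known; the plan is to reduce it, by soft arguments, to one deep trace inequality — either the joint convexity of quantum relative entropy or Epstein's concavity theorem — and to isolate that inequality as the single hard ingredient.

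The route I would take goes through the Gibbs variational principle. For any self-adjoint $M\in\mb C^{d\times d}$ one has the variational formula
\[
\tr e^{M}=\sup_{\rho\succ 0}\left\{\tr\big[\rho(I+M)\big]-\tr(\rho\log\rho)\right\},
\]
where the supremum runs over all positive definite $\rho$; indeed the objective is strictly concave in $\rho$, its gradient vanishes exactly at $\rho=e^{M}$, and the value there is $\tr e^{M}$. Applying this with $M=H+\log A$ (legitimate since $A\succ 0$) gives
\[
\tr\exp(H+\log A)=\sup_{\rho\succ 0}\left\{\tr\big[\rho(I+H)\big]+\tr(\rho\log A)-\tr(\rho\log\rho)\right\}.
\]
Here $\tr\big[\rho(I+H)\big]$ is affine in $\rho$ and free of $A$, while $\tr(\rho\log A)-\tr(\rho\log\rho)$ is precisely minus the (extended) quantum relative entropy $D(\rho\,\|\,A)$, which is jointly convex in the pair $(\rho,A)$ on positive definite matrices. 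Hence the bracketed objective is jointly concave in $(\rho,A)$, and since a partial supremum over one variable of a function that is jointly concave on a convex product set is concave in the remaining variable, $A\mapsto\tr\exp(H+\log A)$ is concave on the positive definite cone, which is the claim.

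The only non-routine step is the joint convexity of relative entropy (Lindblad--Lieb--Ruskai), a statement of essentially the same depth as the theorem being proved; a self-contained treatment must therefore also establish it, for instance via operator-convexity arguments (the Pusz--Woronowicz functional calculus applied to $-\log$) or via a purification together with strong subadditivity of von Neumann entropy. If one wishes to bypass relative entropy, an alternative is the Lie--Trotter formula $\exp(H+\log A)=\lim_{n\to\infty}(e^{H/n}A^{1/n})^{n}$ combined with the cyclicity identity $\tr\big[(e^{H/n}A^{1/n})^{n}\big]=\tr\big[(e^{H/(2n)}A^{1/n}e^{H/(2n)})^{n}\big]$, which turns the claim into concavity of $A\mapsto\tr\big[(K^{\ast}A^{1/n}K)^{n}\big]$ with $K=e^{H/(2n)}$ — Epstein's concavity theorem at exponent $p=1/n$ — after which one passes to the limit, using that a pointwise limit of concave functions is concave. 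On this route the hard input is Epstein's theorem, proved by complex-analytic (Herglotz/Pick-function) methods. Either way the matrix-analytic bookkeeping is routine and everything rests on a single deep inequality.
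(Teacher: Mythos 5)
Your argument is mathematically sound. The unnormalized Gibbs variational formula you state is correct (the objective $\rho\mapsto\tr[\rho(I+M)]-\tr(\rho\log\rho)$ is strictly concave with stationary point $\rho=e^{M}$ and value $\tr e^{M}$ there), the decomposition into a linear term plus $-D(\rho\,\|\,A)$ is right, joint convexity of relative entropy does the heavy lifting, and the partial-sup-of-jointly-concave step is standard. The Lie--Trotter/Epstein alternative you sketch is likewise correct, including the cyclicity identity $\tr[(e^{H/n}A^{1/n})^{n}]=\tr[(e^{H/(2n)}A^{1/n}e^{H/(2n)})^{n}]$.

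For context: the paper does not actually prove this fact; its ``proof'' is a pointer to Lieb's original paper and to Tropp's monograph. The variational route you chose is precisely the one presented in Tropp's book (one of the two cited references), so in effect you have reconstructed the argument the paper is delegating to. You are also candid about where the genuine depth lies — joint convexity of quantum relative entropy (equivalently, via Lindblad's reduction, Lieb's original concavity of $(A,B)\mapsto\tr\bigl(K^{\ast}A^{1-s}KB^{s}\bigr)$), or Epstein's theorem on the other route — and that a fully self-contained proof must still establish one of those; that honesty is appropriate, since the statement being proved and the inequality you invoke are essentially equivalent in difficulty. No gaps beyond the one you flag yourself.
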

\begin{proof}
See \cite{lieb1} and \cite{tropp2015introduction}.
\footnote{Let us mention that Lieb's theorem is one of the key tools for proving matrix concentration inequalities, and its power in this context was first demonstrated by J. Tropp \cite{tropp2012user}.}
\end{proof}

\begin{fact}
\label{fact:06}
Let $f:\mb R\mapsto \mb R$ be a convex function. 
Then $A\mapsto \tr f(A)$ is convex on the set of self-adjoint matrices. 
In particular, for any self-adjoint matrices $A,B$, 
\[
\tr f\l( \frac{A+B}{2} \r)\leq \frac{1}{2}\tr f(A) + \frac{1}{2}\tr f(B).
\] 
\end{fact}
\begin{proof}
This is a consequence of Peierls inequality, see Theorem 2.9 in \cite{carlen2010trace} and the comments following it. 
\end{proof}

Finally, we introduce the Hermitian dilation which allows to reduce many problems involving general rectangular matrices to the case of Hermitian operators. 
Given the rectangular matrix $A\in\mb C^{d_1\times d_2}$, the Hermitian dilation $\m H: \mb C^{d_1\times d_2}\mapsto \mb C^{(d_1+d_2)\times (d_1+d_2)}$ is defined as
\begin{align}
\label{eq:dilation}
&
\m H(A)=\begin{pmatrix}
0 & A \\
A^\ast & 0
\end{pmatrix}.
\end{align}
Since 
$\m H(A)^2=\begin{pmatrix}
A A^\ast & 0 \\
0 & A^\ast A
\end{pmatrix},$ 
it is easy to see that $\| \m H(A) \|=\|A\|$. 
Another tool useful in dealing with rectangular matrices is the following lemma:
\begin{lemma}
\label{lemma:dilation}
Let $S\in \mb C^{d_1\times d_1}, \ T\in \mb C^{d_2\times d_2}$ be self-adjoint matrices, and $A\in \mb C^{d_1\times d_2}$. 
Then 
\[
\l\| \begin{pmatrix}
S & A \\
A^\ast & T
\end{pmatrix}\r\|
\geq
\l\| \begin{pmatrix}
0 & A \\
A^\ast & 0
\end{pmatrix} \r\|.
\]
\end{lemma}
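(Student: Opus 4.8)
The plan is to reduce the matrix inequality to a statement about quadratic forms and then exhibit a unit vector that simultaneously tests the full block matrix and the off-diagonal one. Write $M = \begin{pmatrix} S & A \\ A^\ast & T \end{pmatrix}$ and $N = \begin{pmatrix} 0 & A \\ A^\ast & 0 \end{pmatrix} = \m H(A)$, both self-adjoint on $\mb C^{d_1+d_2}$. Since $\|N\| = \|A\|$, it suffices to produce a vector $w=\begin{pmatrix} u \\ v\end{pmatrix}$ with $\|w\|_2=1$ and $|\dotp{w}{Mw}| \geq \|A\|$, because then $\|M\| = \sup_{\|w\|_2=1}|\dotp{w}{Mw}| \geq \|A\| = \|N\|$.

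First I would choose $u,v$ to align with the leading singular direction of $A$: let $A = \sum_k \sigma_k x_k y_k^\ast$ be a singular value decomposition with $\sigma_1 = \|A\|$, and set $u = x_1$, $v = e^{i\theta} y_1$ for a phase $\theta$ to be chosen, so $\dotp{u}{Av} = e^{i\theta}\sigma_1$ and $\dotp{v}{A^\ast u} = e^{-i\theta}\sigma_1$. Then for $w_\pm = \tfrac{1}{\sqrt 2}\begin{pmatrix} u \\ \pm v\end{pmatrix}$ we get
\[
\dotp{w_\pm}{M w_\pm} = \tfrac12 \dotp{u}{Su} + \tfrac12 \dotp{v}{Tv} \pm \operatorname{Re}\!\left(e^{i\theta}\sigma_1\right).
\]
Choosing $\theta$ so that $e^{i\theta}\sigma_1 = \sigma_1$ (i.e. $\theta=0$), the cross term is exactly $\pm\sigma_1 = \pm\|A\|$. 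Denote $a = \tfrac12\dotp{u}{Su} + \tfrac12\dotp{v}{Tv}$, which is real because $S,T$ are self-adjoint. Then $\dotp{w_+}{Mw_+} = a + \|A\|$ and $\dotp{w_-}{Mw_-} = a - \|A\|$, and at least one of $|a+\|A\||$, $|a - \|A\||$ is $\geq \|A\|$ (indeed their maximum is $|a| + \|A\| \geq \|A\|$). Picking the corresponding $w_\pm$ finishes the argument.

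The only real subtlety is the complex phase bookkeeping: one must make sure the inner product convention $\dotp{A_1}{A_2}=\tr(A_1^\ast A_2)$ and the Hermitian adjoints are handled so that the two cross terms $\dotp{u}{Av}$ and $\dotp{v}{A^\ast u}$ are complex conjugates and can be made simultaneously real and positive by a single phase choice — this is exactly why one works with the $2\times 2$ variational family $w_\pm$ rather than a single vector. There is no genuine obstacle here; the statement is essentially the observation that deleting the diagonal blocks cannot increase the operator norm, seen through the variational characterization. An alternative, equally short route avoiding phases is to note that $\|M\| = \|\,|M|\,\|$ and compare via the fact that conjugating $N$ by the unitary $\operatorname{diag}(I_{d_1}, -I_{d_2})$ negates it, so $\tfrac12(N + (-N)) $ arguments give $\|N\|\le \tfrac12\|M - D\| + \tfrac12\|M' \|$ type bounds; but the direct variational proof above is cleanest, so I would present that.
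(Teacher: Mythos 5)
Your variational argument is correct but genuinely different from the paper's route. The paper squares both self-adjoint matrices, computes the block form of
$\begin{pmatrix} S & A \\ A^\ast & T\end{pmatrix}^2 = \begin{pmatrix} S^2 + AA^\ast & \ast \\ \ast & T^2 + A^\ast A\end{pmatrix}$,
and then argues that the operator norm of a PSD matrix dominates the norm of each diagonal block, together with $S^2 + AA^\ast \succeq AA^\ast$, $T^2 + A^\ast A \succeq A^\ast A$; finally it uses that $\|R\|\geq\|Q\|$ for self-adjoint $R,Q$ iff $\|R^2\|\geq \|Q^2\|$. You instead work directly with the variational characterization $\|M\|=\sup_{\|w\|=1}|\langle w,Mw\rangle|$: plugging the leading singular pair of $A$ into the two test vectors $w_\pm=\tfrac{1}{\sqrt2}(u;\pm v)$ forces one of $|a\pm\|A\||$ to be at least $\|A\|$. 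Both are correct; the paper's version buys you a one-line derivation from two standard semidefinite facts, while yours is more elementary and explicitly constructive (it exhibits the test vector). One small note: since the SVD already gives $\sigma_1\geq 0$ real, the extra phase $\theta$ is vacuous and can be dropped; the $\pm$ dichotomy alone handles the sign of the diagonal contribution $a$. Your parenthetical remark about conjugating by $\operatorname{diag}(I_{d_1},-I_{d_2})$ is in fact the slickest route of all: with $U=\operatorname{diag}(I,-I)$ one has $UMU^\ast = \begin{pmatrix} S & -A \\ -A^\ast & T\end{pmatrix}$, hence $\mathcal H(A)=\tfrac12(M-UMU^\ast)$ and $\|\mathcal H(A)\|\leq\tfrac12(\|M\|+\|UMU^\ast\|)=\|M\|$ by the triangle inequality and unitary invariance.
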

\begin{proof}
See section \ref{proof:dilation} in the appendix.
\end{proof}

\section{Main results}
\label{section:main}

Our construction has its roots in the technique proposed by O. Catoni \cite{catoni2012challenging} for estimating the univariate mean. 
Let us briefly recall the main ideas of Catoni's approach. 
Assume that $\xi,\xi_1,\ldots,\xi_n$ is a sequence of i.i.d. random variables such that $\mb E\xi=\mu$ and $\var(\xi)\leq v^2$. 
Catoni's estimator is defined as follows: 
let $\psi(x):\mb R\mapsto \mb R$ be a non-decreasing function such that for all $x\in \mb R$,
\begin{align}
\label{eq:psi}
-\log(1-x+x^2/2)\leq \psi(x) \leq \log(1+x+x^2/2).
\end{align}
See remark \ref{remark:psi2} below for examples of such functions. 
Given $\theta>0$, let $\hat \mu_{\theta}$ be such that 
\begin{align}
\label{eq:catoni}
\sum_{j=1}^n \psi\l( \theta(\xi_j-\hat\mu_{\theta})  \r)=0
\end{align}
(clearly, $\hat\mu_\theta$ always exists due to monotonicity). 
Set $\eta=v\sqrt{\frac{2 t}{n(1-2t/n)}}$ and $\theta_\ast=\sqrt{\frac{2t}{n(v^2+\eta^2)}}$. 
Assuming that $n>2t$, it is shown in \cite{catoni2012challenging} that
$
|\hat\mu_{\theta_\ast} - \mu|\leq \eta
$
with probability $\geq 1-2e^{-t}$.

We proceed by presenting a multivariate extension of the estimator $\hat \mu_\theta$. 
We will first formulate main results for the self-adjoint matrices, and will later deduce the general case of rectangular matrices as a corollary. 
Let $Y_1,\ldots,Y_n\in \mb C^{d\times d}$ be a sequence of independent self-adjoint random matrices 
such that $\sigma_n^2:=\l\|  \sum\limits_{j=1}^n\mb EY_j^2  \r\|<\infty$. 
Let $\Psi$ be such that $\Psi'(x)=\psi(x)$ for all $x\in \mb R$, and set 
\begin{align}
\label{eq:matrix-catoni}
\wh T_\theta^\ast = \argmin_{S\in \mb C^{d\times d},S=S^\ast}\l[ \tr \sum_{j=1}^n \Psi\l( \theta(Y_j - S) \r) \r], 
\end{align}
where $\theta>0$ is an appropriate constant. 
It follows from Fact \ref{fact:06} that $\wh T^\ast_\theta$ exists, moreover, it is unique if $\psi(x)$ is strictly increasing. 
It is also not hard to see that \eqref{eq:matrix-catoni} is equivalent to 
\begin{align}
\label{eq:matrix-catoni2}
\sum_{j=1}^n \psi \l( \theta(Y_j - \wh T^\ast_\theta)\r) = 0_{d\times d}.
\end{align}
Indeed, if $F_\psi(S):= \tr \sum_{j=1}^n \Psi\l( \theta(Y_j - S) \r)$, then \eqref{eq:matrix-catoni2} simply states that the gradient of $F_\psi$ evaluated at $\wh T^\ast_\theta$ is equal to zero; see Lemma \ref{lemma:gradient} in the appendix for more details.

To understand the properties of the estimator defined via \eqref{eq:matrix-catoni} and \eqref{eq:matrix-catoni2}, we will first consider another estimator $\wh T^{(0)}_\theta$ that shares many important properties with $\wh T^\ast_\theta$ but is easier to analyze.

The ``preliminary estimator'' $\wh T^{(0)}_\theta$ is constructed as follows: given $\theta>0$ and a function 
$\psi$ satisfying (\ref{eq:psi}), set $X_j:=\psi\l( \theta Y_j \r), \ j=1,\ldots,n$ and 
\begin{align}
\label{eq:T^0}
\wh T^{(0)}_\theta := \frac{1}{n\theta}\sum_{j=1}^n X_j.
\end{align}
In other words, $\wh T^{(0)}_\theta$ is an average of ``$\psi$-truncated'' observations. 
Since $X_j\simeq \theta Y_j$ for small $\theta$ and a smooth function $\psi$, we expect that $\wh T_\theta^{(0)}$ is close to $\frac{1}{n}\sum_{j=1}^n \mb EY_j$.  
In the following sections, we will make this intuition more precise. 
In particular, we will establish the following (so far informally stated) results: 
\begin{thm} 
\begin{asparaenum}
\item
Assume that the observations $Y_1,\ldots,Y_n$ are i.i.d. copies of $Y\in \mb C^{d\times d}$ and the parameter $\theta$ is chosen properly. Then 
\[
\Pr\l( \l\| \wh T_\theta^{(0)}- \mb EY \r\| \geq \sigma\sqrt{\frac{t}{n}} \r)\leq 2d \exp\l( -\frac{t}{2} \r),
\] 
where $\sigma^2:=\sigma_n^2/n=\l\| \mb E Y^2\r\|$. 
\item Assume that $n$ is large enough and $\theta$ is chosen properly. Then the estimator $\wh T_\theta^\ast$ defined via \eqref{eq:matrix-catoni2} satisfies the inequality 
\[
\Pr\l( \l\| \wh T_\theta^\ast - \mb EY \r\| \geq C_1\sigma_0\sqrt{\frac{t}{n}} \r) \leq C_2 d \exp\l( -\frac{t}{2} \r),
\]
where $C_1,C_2>0$ are absolute constants and $\sigma_0^2:=\l\| \mb E(Y-\mb EY)^2 \r\|$. 
\end{asparaenum}
\end{thm}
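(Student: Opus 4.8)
The plan is to treat the two parts separately, with the first (the preliminary estimator $\widehat T_\theta^{(0)}$) serving as the technical backbone, and the second following from it via a perturbation/linearization argument. For part (1), the key is that $\widehat T_\theta^{(0)} = \frac{1}{n\theta}\sum_j \psi(\theta Y_j)$ is an average of bounded-in-a-suitable-sense independent self-adjoint matrices, so I would adapt the matrix-Chernoff / matrix-Bernstein machinery of Ahlswede--Winter and Tropp. Concretely, fix a unit direction implicitly by working with $\tr\exp$: for any $\lambda>0$,
\[
\Pr\!\left( \lambda_{\max}\!\left( \widehat T_\theta^{(0)} - \mathbb E Y \right) \geq u \right) \leq e^{-\lambda u}\, \mathbb E\, \tr\exp\!\left( \lambda \left(\widehat T_\theta^{(0)} - \mathbb E Y\right) \right),
\]
then use the subadditivity of the matrix cumulant generating function (Lieb's concavity theorem, Fact \ref{fact:05}) to reduce to bounding a single term $\mathbb E\,\exp\!\bigl( \tfrac{\lambda}{n}(\tfrac{1}{\theta}\psi(\theta Y) - \mathbb E Y)\bigr)$ in the semidefinite order. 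The crucial calculation, and the place where the defining inequality \eqref{eq:psi} does its work, is the bound
\[
\mathbb E\, \exp\!\left( \frac{\lambda}{n\theta}\,\psi(\theta Y) \right) \preceq \exp\!\left( \frac{\lambda}{n}\,\mathbb E Y + \frac{\lambda^2}{2 n^2}\,\mathbb E Y^2 \right),
\]
obtained as follows: since $\psi(x)\le \log(1+x+x^2/2)$, Fact \ref{fact:03} (operator monotonicity of $\log$) and Fact \ref{fact:01} give $\psi(\theta Y)\preceq \log\!\bigl(I+\theta Y+\tfrac{\theta^2}{2}Y^2\bigr)$, hence $\exp(\tfrac{\lambda}{n\theta}\psi(\theta Y))$ — after choosing $\lambda/n = \theta$, the natural scaling that makes the argument clean — is dominated by $I+\theta Y+\tfrac{\theta^2}{2}Y^2$, and taking expectations and using $I + M \preceq e^M$ yields the claim. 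Multiplying over $j$ (legitimized by Lieb) and optimizing over $\theta$ (equivalently over $\lambda$) with the choice $\theta \asymp \sqrt{t/(n\sigma^2)}$ gives, on the event in question, the bound $\sigma\sqrt{t/n}$; the factor $2d$ comes from the trace (dimension $d$) and from applying the argument to both $\widehat T_\theta^{(0)}-\mathbb E Y$ and its negative to control $\|\cdot\| = \lambda_{\max}(\pm\,\cdot)$.

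For part (2), the estimator $\widehat T_\theta^\ast$ is defined implicitly by the fixed-point equation \eqref{eq:matrix-catoni2}, $\sum_j \psi(\theta(Y_j - \widehat T_\theta^\ast)) = 0$. I would compare $\widehat T_\theta^\ast$ to $\widehat T_\theta^{(0)}$: plugging $S=0$ into $\frac{1}{n\theta}\sum_j\psi(\theta(Y_j-S))$ recovers $\widehat T_\theta^{(0)}$, while $S=\widehat T_\theta^\ast$ makes it vanish, so the deviation of $\widehat T_\theta^\ast$ from $\mathbb E Y$ should be governed by $\widehat T_\theta^{(0)}-\mathbb E Y$ plus a correction. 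Using Fact \ref{fact:04} (which is precisely why the two-sided bound \eqref{eq:psi} was imposed) one gets a sandwich: $-\log(I - \theta(Y-S) + \tfrac{\theta^2}{2}(Y-S)^2) \preceq \psi(\theta(Y-S)) \preceq \log(I + \theta(Y-S) + \tfrac{\theta^2}{2}(Y-S)^2)$. Feeding $S = \mathbb E Y \pm r\,v$ for a candidate radius $r$ and the extremal direction $v$ into the fixed-point relation, and bounding $\tfrac1n\sum_j (Y_j - \mathbb E Y \mp r v)^2$ in operator norm (this is where $\sigma_0^2 = \|\mathbb E(Y-\mathbb E Y)^2\|$ enters, together with a lower-order term that requires $n$ to be large), one shows that the function $S\mapsto \lambda_{\max}\bigl(\tfrac1{n}\sum_j\psi(\theta(Y_j-S))\bigr)$ changes sign as $S$ moves from $\mathbb E Y - r v$ to $\mathbb E Y + rv$ with $r \asymp \sigma_0\sqrt{t/n}$, forcing the root $\widehat T_\theta^\ast$ to lie within distance $r$ of $\mathbb E Y$. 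The monotonicity built into $\psi$, together with Fact \ref{fact:02} converting semidefinite ordering into eigenvalue ordering, makes this localization rigorous, and the same $\tr\exp$ concentration as in part (1) controls the randomness.

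The main obstacle is part (2): unlike $\widehat T_\theta^{(0)}$, the estimator $\widehat T_\theta^\ast$ is not an explicit average, so the clean Lieb/Chernoff argument does not apply directly, and one must instead set up a deterministic reduction — showing that accuracy of the fixed point follows from a uniform (over $S$ in a small ball around $\mathbb E Y$) control of $\frac1n\sum_j \psi(\theta(Y_j-S))$ — and only then invoke concentration. Handling this uniformity, and in particular extracting the sharp variance proxy $\sigma_0$ rather than $\sigma$ (which requires the recentering $Y_j - \mathbb E Y$ to be visible inside the $\psi$ term), together with quantifying exactly how large $n$ must be so the quadratic correction term $\tfrac{\theta^2}{2}\|\tfrac1n\sum(Y_j - \mathbb E Y)^2\|$ is negligible, is the delicate part; the rest is a repackaging of the standard matrix-concentration toolkit assembled in Section \ref{section:prelim}.
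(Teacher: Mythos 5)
Your sketch for part (1) is essentially the paper's argument: the bound on the matrix MGF is derived from the scalar inequality $\psi(x)\leq\log(1+x+x^2/2)$ lifted to the semidefinite order via Fact \ref{fact:01} (and, since both sides are functions of the same matrix $Y_j$, the exponentiation step you use is legitimate even though $\exp$ is not operator monotone), the products are combined via Lieb (the paper does it iteratively through conditional expectations, keeping everything under $\tr\exp$, which is technically a bit cleaner than writing a matrix MGF bound), and the two one-sided tails are assembled. This part is fine and matches Lemma \ref{lemma:main} and Theorem \ref{th:main}.

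Part (2) is where your proposal diverges from the paper and where it has a real gap. You propose a one-shot root-localization argument: feed $S=\mathbb EY\pm r\,v$ into the fixed-point relation, show that $\lambda_{\max}\bigl(\tfrac1n\sum_j\psi(\theta(Y_j-S))\bigr)$ changes sign, and conclude that the root lies within distance $r$. That is an intermediate-value-theorem argument along a \emph{single} direction, but $\widehat T^\ast_\theta$ is the zero of a $d^2$-dimensional vector field (equivalently, the minimizer of the convex functional $F_\psi(S)=\tr\sum_j\Psi(\theta(Y_j-S))$). Checking a sign change along one direction $v$ does not localize the root; what is actually needed is that the negative gradient points inward uniformly over the whole sphere $\{S:\|S-\mathbb EY\|=r\}$, i.e.\ a bound of the form
\begin{align*}
\sup_{\|S-\mathbb EY\|\leq r}\Bigl\|\frac1{n\theta}\sum_j\psi(\theta(Y_j-S))-\mathbb E\bigl[\tfrac1\theta\psi(\theta(Y-S))\bigr]\Bigr\|
\end{align*}
that is small compared to $r$. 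You flag this uniformity as ``the delicate part'' but do not supply a mechanism for it, and that mechanism is the crux: a pointwise (single-$S$) matrix-Chernoff bound cannot simply be upgraded by a union bound, because the index set is a continuum. The paper supplies the mechanism in Lemma \ref{lemma:L_n}: a generic chaining argument (Dudley's entropy integral) for the $\mathbb C^{d\times d}$-valued process $S\mapsto\tfrac1{n\theta}\sum_j(X_{j,i}(S)-\mathbb EX_{j,i}(S))$, whose sub-Gaussian increments come from the matrix Hoeffding inequality \emph{combined with} the operator Lipschitz property of $\psi$ (Assumption \ref{assumption:2}, Lemma \ref{lemma:lipschitz}) --- an ingredient absent from your proposal. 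This chaining step is also the exact source of the ``$n$ large enough'' condition: the relevant metric space is a ball in the $d^2$-dimensional space of Hermitian matrices under the operator norm, whose covering number is $\bigl(1+2r/\eps\bigr)^{d^2}$ (Lemma \ref{lemma:cover}), yielding $\gamma_2\asymp d$ and hence the requirement $n\gtrsim d^2$; your proposal attributes the sample-size restriction instead to the quadratic remainder $\tfrac{\theta^2}{2}\|\tfrac1n\sum(Y_j-\mathbb EY)^2\|$, which is not the bottleneck. Finally, even after uniform control is in hand, the paper does not argue directly about the fixed point: it runs a steepest-descent iteration $T^{(k)}_{n,j}=T^{(k-1)}_{n,j}+\tfrac1{n\theta_j}\sum_i\psi(\theta_j(Y_i-T^{(k-1)}_{n,j}))$, proves a deterministic contraction $\delta^{(k)}=2\sigma_{0,j}\sqrt{2t/n}+\tfrac16\delta^{(k-1)}$ on the good event, and passes to the limit $k\to\infty$; on top of that it layers Lepski's method over $j$ to remove knowledge of $\sigma_0$. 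Your one-shot comparison ``$\widehat T^\ast$ vs.\ $\widehat T^{(0)}$'' would need to be replaced by (or shown equivalent to) this iterative scheme, and the argument as written does not extract the factor $\sigma_0$ in place of $\sigma$ without the recentering that the iteration performs automatically.
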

Note that the ``variance term'' $\l\| \mb EY^2 \r\|$ appearing in the first part of the bound above is akin to the second moment, while in the second bound it is replaced by $\sigma_0^2=\l\| \mb E(Y-\mb EY)^2 \r\|$; presence of the term $\l\| \mb EY^2 \r\|$ can be explained by the fact that the estimator $\wh T^{(0)}_\theta$ is obtained via bias-producing truncation. 
We remark that in some applications, such as matrix completion discussed in section \ref{section:examples}, even the estimator $\wh T_\theta^{(0)}$ with ``suboptimal'' variance term suffices to obtain good bounds.

\begin{remark}
\label{remark:psi2}
Most of our results do not depend on the concrete choice of the function $\psi$. 
One possibility is 
\begin{align}
\label{eq:psi2}
\psi_1(x)=\begin{cases}
\log\l(1+x+\frac{x^2}{2}\r), & x\geq 0, \\
-\log\l(1-x+\frac{x^2}{2}\r), & x<0.
\end{cases}
\end{align}
Another example is 
\begin{align}
\label{eq:psi3}
\psi_2(x)=\begin{cases}
x - \sign(x)\frac{x^2}{2}, & x\in[-1,1], \\
\frac{1}{2}\sign(x), & |x|>1.
\end{cases}
\end{align}
Since the latter function is bounded, it can provide additional advantages (such as robustness) in applications. 
However, note that $\psi_2(x)$ does not satisfy \eqref{eq:psi}; instead, it satisfies a slightly weaker inequality 
\[
-\log\l(1-x+x^2\r)\leq \psi_2(x) \leq \log\l(1+x+x^2 \r),
\]
hence all subsequent results hold for $\psi_2$ as well, albeit with slightly worse constant factors. 
We also note that both $\psi_1$ and $\psi_2$ are operator Lipschitz functions; see Lemma \ref{lemma:lipschitz} for details. 
\end{remark}

\subsection{Bounds for the moment generating function}

In this section, we will establish deviation inequalities for the estimator $\wh T_\theta^{(0)}=\frac{1}{n\theta}\sum_{j=1}^n \psi(\theta Y_j)$. 
The lemma below is the cornerstone of our results. 
As before, given $\theta>0$, let $X_j=\psi(\theta Y_j)$.
\begin{lemma}
\label{lemma:main}
The following inequalities hold:
\begin{align}
&
\label{main:01}
\mb E \tr \exp\l( \sum_{j=1}^n \l( X_j -\theta\mb EY_j\r) \r)\leq \tr \exp\l( \frac{\theta^2}{2}\sum_{j=1}^n \mb EY_j^2 \r), \\
&
\label{main:02}
\mb E \tr \exp\l( \sum_{j=1}^n \l( \theta\mb EY_j - X_j\r) \r)\leq \tr \exp\l( \frac{\theta^2}{2}\sum_{j=1}^n \mb EY_j^2 \r).
\end{align}
\end{lemma}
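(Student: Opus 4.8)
The plan is to follow the standard Tropp-style recipe for matrix concentration, using Lieb's concavity theorem (Fact \ref{fact:05}) to split off the contribution of each summand one at a time, combined with the defining inequality \eqref{eq:psi} for $\psi$ and Fact \ref{fact:04} to bound the relevant one-step exponential moment. I will prove \eqref{main:01} in detail; \eqref{main:02} follows by applying the same argument to $-Y_j$ (note that $\psi(-\theta Y_j) = -\psi_{-}(\theta Y_j)$ for the ``reflected'' function, or more simply by repeating the argument with the roles of the two bounds in \eqref{eq:psi} interchanged, which is why we require the two-sided bound).

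First I would set $Z_j := X_j - \theta\,\mb E Y_j = \psi(\theta Y_j) - \theta\,\mb E Y_j$ and write $W_k := \sum_{j=1}^k Z_j$, so that the goal is to bound $\mb E\,\tr\exp(W_n)$. The key step is to condition on $Y_1,\dots,Y_{n-1}$ and peel off the $n$-th term: writing $W_n = W_{n-1} + Z_n$ and invoking Fact \ref{fact:05} with $H = W_{n-1}$ (a fixed self-adjoint matrix given the conditioning) together with the monotonicity of $A\mapsto\tr\exp(\cdot)$ from Fact \ref{fact:02}, Jensen's inequality for the concave map in Lieb's theorem yields
\[
\mb E_{n-1}\,\tr\exp\bigl(W_{n-1} + Z_n\bigr) \le \tr\exp\Bigl(W_{n-1} + \log\mb E_{n-1}\,e^{Z_n}\Bigr).
\]
Here I would need the per-step bound: since $\mb E_{n-1} e^{Z_n} = e^{-\theta \mb E Y_n}\,\mb E\, e^{\psi(\theta Y_n)}$ — wait, $Z_n$ and $\theta\mb E Y_n$ commute with each other only trivially, so more carefully one uses $\mb E_{n-1} e^{Z_n}$ directly. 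The upper bound in \eqref{eq:psi} gives $\psi(\theta Y_n) \preceq \log(I + \theta Y_n + \theta^2 Y_n^2/2)$ in the matrix sense (Fact \ref{fact:01} applied entrywise on eigenvalues, using Fact \ref{fact:04} for positive-definiteness), hence by operator monotonicity of $\exp$ of a logarithm and then taking expectations, $\mb E\, e^{\psi(\theta Y_n)} \preceq I + \theta\,\mb E Y_n + \tfrac{\theta^2}{2}\mb E Y_n^2$. The subtlety is handling the deterministic shift $-\theta\mb E Y_n$; the cleanest route is to absorb it directly, observing via Fact \ref{fact:04} (the inequality $-\log(1-x+x^2/2)\le\psi(x)$ gives a lower matrix bound too) and a short computation that $\log\mb E_{n-1} e^{Z_n} \preceq \tfrac{\theta^2}{2}\mb E Y_n^2$, using $\log(I + \theta\mb E Y_n + \tfrac{\theta^2}{2}\mb E Y_n^2) - \theta\mb E Y_n \preceq \tfrac{\theta^2}{2}\mb E Y_n^2$ which reduces to the scalar inequality $\log(1+t+t^2/2) \le t + t^2/2$.

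Iterating this bound from $j=n$ down to $j=1$, each step adding a term $\tfrac{\theta^2}{2}\mb E Y_j^2$ to the exponent and using Fact \ref{fact:01}/\ref{fact:02} to maintain the inequality under $\tr\exp$, produces
\[
\mb E\,\tr\exp(W_n) \le \tr\exp\Bigl(\tfrac{\theta^2}{2}\sum_{j=1}^n \mb E Y_j^2\Bigr),
\]
which is \eqref{main:01}. For \eqref{main:02}, I would run the identical argument on the matrices $-Y_j$, noting that the function $x\mapsto -\psi(-x)$ again satisfies a bound of the form \eqref{eq:psi} (the two-sided structure of \eqref{eq:psi} is exactly symmetric under $x\mapsto -x$ up to swapping the two sides), and that $\mb E(-Y_j)^2 = \mb E Y_j^2$, so the right-hand side is unchanged. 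The main obstacle I anticipate is the bookkeeping around the deterministic shift $\theta\mb E Y_j$ inside the matrix exponential — matrices do not commute, so one cannot naively factor $e^{Z_j} = e^{-\theta\mb E Y_j}e^{\psi(\theta Y_j)}$; the fix is to keep $Z_j$ intact and bound $\log\mb E_{j-1}e^{Z_j}$ as a single object, which is legitimate because $\mb E_{j-1}e^{Z_j}$ is a fixed positive-definite matrix and Facts \ref{fact:01}, \ref{fact:03}, and \ref{fact:04} are exactly designed to let one pass from the scalar inequalities on eigenvalues to the matrix ordering needed for the Lieb iteration.
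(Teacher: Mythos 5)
Your framework---iterate Lieb's concavity (Fact \ref{fact:05}) plus Jensen to peel off one summand at a time, after exploiting the scalar inequality~\eqref{eq:psi}---is the right one and is indeed the one the paper uses, but the specific way you invoke Lieb creates a gap that your own ``fix'' does not close. You take $H=W_{n-1}$ and $\log A = Z_n = \psi(\theta Y_n)-\theta\mb E Y_n$, which leaves you needing the operator bound $\log \mb E_{n-1} e^{Z_n}\preceq \tfrac{\theta^2}{2}\mb E Y_n^2$. However, $e^{Z_n}=e^{\psi(\theta Y_n)-\theta\mb E Y_n}$ is \emph{not} a matrix function of the single self-adjoint matrix $\theta Y_n$: the shift $\theta\mb E Y_n$ does not commute with $\psi(\theta Y_n)$, so $e^{Z_n}$ does not factor, and Facts \ref{fact:01}--\ref{fact:04}, which transfer scalar inequalities only to functions evaluated at one fixed self-adjoint argument, simply do not apply to $\mb E\, e^{Z_n}$. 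The bridge from $\log\mb E\, e^{Z_n}$ to $\log(I+\theta\mb E Y_n+\tfrac{\theta^2}{2}\mb E Y_n^2)-\theta\mb E Y_n$ is never established, and I do not see how to establish it with the stated tools; the appeal to the scalar inequality $\log(1+t+t^2/2)\le t+t^2/2$ does not engage the non-commutativity that you yourself flag as ``the main obstacle.''

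The paper avoids this entirely by re-arranging the exponent \emph{before} invoking Lieb: put the deterministic shift inside $H$, i.e.\ take $H = W_{n-1}-\theta\mb E Y_n$ (still a fixed self-adjoint matrix given the conditioning on $Y_1,\ldots,Y_{n-1}$), first bound the random part $\psi(\theta Y_n)\preceq \log(I+\theta Y_n+\tfrac{\theta^2}{2}Y_n^2)$ via Fact \ref{fact:01} (both sides are functions of $\theta Y_n$, so commutativity is automatic, and Fact \ref{fact:04} guarantees the argument of $\log$ is positive definite), and only then apply Lieb and Jensen with $A=I+\theta Y_n+\tfrac{\theta^2}{2}Y_n^2$, which \emph{is} a positive-definite matrix function of $Y_n$ alone. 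Jensen produces $\log \mb E_{n-1} A = \log(I+\theta\mb E Y_n+\tfrac{\theta^2}{2}\mb E Y_n^2)$, and now the bound $\log(I+B)\preceq B$ for $B = \theta\mb E Y_n + \tfrac{\theta^2}{2}\mb E Y_n^2$ (a single matrix, so Fact \ref{fact:01} legitimately lifts $\log(1+x)\le x$), combined with the $-\theta\mb E Y_n$ already sitting in $H$, gives the clean increment $\tfrac{\theta^2}{2}\mb E Y_n^2$ at each step. Your treatment of \eqref{main:02} (rerun the argument on $-Y_j$, using the other side of \eqref{eq:psi}) matches the paper and is fine once the first part is repaired along these lines.
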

\begin{proof}
Note that 
\begin{align*}
\mb E\, &\tr \exp\l( \sum_{j=1}^n \l( X_j -\theta\mb EY_j\r) \r) = \\ 
&\mb E \mb E_{n-1} \tr \exp\l( \l[ \sum_{j=1}^{n-1} \l( X_j -\theta\mb EY_j\r) -\theta\mb EY_n \r] + \psi\l( \theta Y_n  \r) \r)\leq \\
&\mb E \mb E_{n-1} \tr \exp\l( \l[ \sum_{j=1}^{n-1} \l( X_j -\theta\mb EY_j\r) -\theta\mb EY_n \r] + \log\l(I+\theta Y_n +\theta^2 Y_n^2/2\r) \r)\leq \\
&
\mb E\, \tr \exp\l( \sum_{j=1}^{n-1} \l( X_j -\theta\mb EY_j\r)  + \log\l( I+\theta \mb EY_n +\theta^2 \mb EY_n^2/2 \r) - \theta\mb EY_n \r),
\end{align*}
where the first inequality follows from the semidefinite relation $\psi(\theta Y_n)\preceq \log\l( I +\theta Y_n+\frac{\theta^2}{2}Y_n^2\r)$ and Fact \ref{fact:02}, 
and the second inequality follows from Lieb's concavity theorem (Fact \ref{fact:05}) with 
$
H= \sum_{j=1}^{n-1} \l( X_j -\theta\mb EY_j\r) - \theta\mb EY_n
$ 
and Jensen's inequality for conditional expectation. 
We also note that $I+\theta\mb EY_n+\theta^2\mb EY_n^2/2\succ 0$ since $I+\theta Y_n+\theta^2 Y_n^2/2\succ 0$ almost surely, hence $\log\l( I+\theta \mb EY_n +\theta^2 \mb EY_n^2/2 \r)$ is well-defined.
Repeating the steps for $X_{n-1},\ldots,X_1$, we obtain the inequality
\begin{align}
\label{eq:01}
&
\mb E \, \tr \exp\l( \sum_{j=1}^n \l( X_j - \theta\mb EY_j\r) \r) \leq 
\mb \tr \exp\l( \sum_{j=1}^n \l( \log\l( I+\theta \mb EY_j +\theta^2 \mb EY_j^2/2 \r) - \theta\mb EY_j \r) \r)
\end{align}
It remains to note that by Fact \ref{fact:01} and the inequality $\log(1+x)\leq x$ (that holds $\forall \ x>-1$), for all $j=1,\ldots, n$
\[
\log\l( I+\theta \mb E Y_j + \theta^2 \mb E Y_j^2/2 \r) \preceq \theta\mb EY_j + \frac{\theta^2}{2}\mb E Y_j^2,
\]
or $\log\l( I+\theta \mb E Y_j + \theta^2 \mb E Y_j^2/2 \r)- \theta\mb EY_j \preceq \frac{\theta^2}{2}\mb E Y_j^2$. 
The first inequality (\ref{main:01}) now follows from (\ref{eq:01}) and Fact \ref{fact:02}.

To establish the second inequality of the lemma, we use the relation 
$-X_j=-\psi(\theta Y_j)\preceq \log\l( I -\theta Y_j + \frac{\theta^2}{2}Y_j^2\r)$ (which follows from (\ref{eq:psi}) and Fact \ref{fact:01})
together with the Fact \ref{fact:02} to deduce that 
\begin{align*}
\mb E\, \tr \exp&\l( \sum_{j=1}^n \l( \theta\mb EY_j - X_j\r) \r)\leq \\
&
\mb E\, \tr \exp\l(  \sum_{j=1}^n \l( \log\l(I +\theta (-Y_j)+\theta^2 Y_j^2/2 \r) - \theta\mb E(-Y_j)\r)  \r),
\end{align*}
and apply inequality (\ref{main:01}) to the sequence $-Y_1,\ldots,-Y_n$ with 
\[
X_j=\log\l(I +\theta (-Y_j)+\theta^2 (-Y_j)^2/2 \r), \  j=1,\ldots,n.
\] 
\end{proof}
\noindent We are ready to state and prove the main result of this section. 
\begin{theorem}
\label{th:main}
Let $Y_1,\ldots,Y_n\in \mb C^{d\times d}$ be a sequence of independent self-adjoint random matrices, and 
$\sigma_n^2\geq\l\| \sum_{j=1}^n \mb EY_j^2 \r\|$. 
Then for all $\theta>0$
\[
\Pr\l( \l\| \sum_{j=1}^n \l(\frac{1}{\theta}\psi\l( \theta Y_j  \r) - \mb EY_j \r) \r\| \geq t\sqrt n \r)
\leq 2d \exp\l( -\theta t\sqrt{n} + \frac{\theta^2\sigma_n^2}{2} \r).
\]
In particular, setting $\theta=\frac{t\sqrt{n}}{\sigma_n^2}$, we get the ``sub-Gaussian'' tail bound $2d \exp\l( -\frac{t^2}{2\sigma_n^2/n} \r)$, for a given $t>0$. 
Alternatively, setting $\theta=\frac{\sqrt{n}}{\sigma_n^2}$ (independent of $t$), we obtain sub-exponential concentration with tail 
$2d \exp\l( -\frac{2t-1}{2\sigma_n^2/n} \r)$ for all $t>1/2$. 
\end{theorem}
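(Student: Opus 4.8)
\textbf{Proof plan for Theorem \ref{th:main}.}

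The plan is to bound the operator norm deviation by passing to the largest eigenvalue of the Hermitian dilation (or rather, to control both $\lambda_{\mx}$ and $-\lambda_{\mn}$ of the centered sum directly, since the $Y_j$ are already self-adjoint), and then apply the exponential Markov inequality together with the moment generating function bounds of Lemma \ref{lemma:main}. Concretely, write $W_\theta := \sum_{j=1}^n \bigl( \frac{1}{\theta}\psi(\theta Y_j) - \mb EY_j \bigr)$, which is self-adjoint, so that $\|W_\theta\| = \max\bigl( \lambda_{\mx}(W_\theta), \lambda_{\mx}(-W_\theta) \bigr)$. The first step is the standard observation that for any self-adjoint $A$ and any $s>0$, $\Pr\bigl( \lambda_{\mx}(A) \geq u \bigr) = \Pr\bigl( \lambda_{\mx}(e^{sA}) \geq e^{su} \bigr) \leq e^{-su}\,\mb E\,\lambda_{\mx}(e^{sA}) \leq e^{-su}\,\mb E\,\tr e^{sA}$, where the last step uses that $e^{sA} \succ 0$ so its largest eigenvalue is dominated by its trace.

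Next I would apply this with $A = W_\theta$ and $s = \theta$ (this choice is what makes the $X_j - \theta\mb EY_j$ terms in Lemma \ref{lemma:main} appear, since $\theta W_\theta = \sum_j (X_j - \theta\mb EY_j)$ with $X_j = \psi(\theta Y_j)$). Then inequality \eqref{main:01} of Lemma \ref{lemma:main} gives
\[
\mb E\,\tr\exp\bigl( \theta W_\theta \bigr) \leq \tr\exp\Bigl( \frac{\theta^2}{2}\sum_{j=1}^n \mb EY_j^2 \Bigr) \leq d\,\exp\Bigl( \frac{\theta^2}{2}\Bigl\| \sum_{j=1}^n \mb EY_j^2 \Bigr\| \Bigr) \leq d\,e^{\theta^2\sigma_n^2/2},
\]
using that $\tr e^B \leq d\,\lambda_{\mx}(e^B) = d\,e^{\lambda_{\mx}(B)}$ for self-adjoint $B$, and that $\sum_j \mb EY_j^2 \succeq 0$ so its largest eigenvalue equals its operator norm, which is at most $\sigma_n^2$. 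Setting $u = \theta t\sqrt{n}$ in the Markov bound yields $\Pr\bigl( \lambda_{\mx}(W_\theta) \geq t\sqrt{n} \bigr) \leq d\exp\bigl( -\theta t\sqrt{n} + \theta^2\sigma_n^2/2 \bigr)$. The symmetric bound on $\lambda_{\mx}(-W_\theta) = \lambda_{\mx}\bigl( \sum_j (\theta\mb EY_j - X_j)/\theta \cdot \theta\bigr)$ follows identically from inequality \eqref{main:02}. A union bound over these two events gives the factor $2d$ and the stated inequality.

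Finally, the two specializations are just substitution: with $\theta = t\sqrt{n}/\sigma_n^2$ the exponent becomes $-t^2 n/\sigma_n^2 + t^2 n/(2\sigma_n^2) = -\frac{t^2 n}{2\sigma_n^2} = -\frac{t^2}{2(\sigma_n^2/n)}$; with $\theta = \sqrt{n}/\sigma_n^2$ the exponent becomes $-tn/\sigma_n^2 + n/(2\sigma_n^2) = -\frac{2t-1}{2(\sigma_n^2/n)}$, which is a genuine bound precisely when $t > 1/2$. I do not anticipate a serious obstacle here: Lemma \ref{lemma:main} has already done the heavy lifting (the Lieb concavity argument), and what remains is the routine Markov/trace-exponential machinery. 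The only points requiring a little care are the reduction from operator norm to a pair of one-sided eigenvalue bounds for self-adjoint matrices, and the bookkeeping that relates the exponent $\theta^2\sigma_n^2/2$ as stated (with $\sigma_n^2$) to the quantity $\|\sum_j \mb EY_j^2\|$ actually controlled in Lemma \ref{lemma:main}, which is immediate from the hypothesis $\sigma_n^2 \geq \|\sum_j \mb EY_j^2\|$.
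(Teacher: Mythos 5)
Your proposal is correct and follows essentially the same route as the paper: exponential Markov applied to $\lambda_{\mx}$ and $\lambda_{\mx}(-\,\cdot\,)$ of the centered sum, invoking both halves of Lemma \ref{lemma:main} for the trace-mgf bound, then $\tr e^A\leq d\,e^{\|A\|}$ and a union bound. One trivial typo: you write ``setting $u=\theta t\sqrt n$'' but in your own Markov inequality $u$ is the threshold on $\lambda_{\mx}(W_\theta)$, so you mean $u=t\sqrt n$ (so that $su=\theta t\sqrt n$); the displayed conclusion is already correct.
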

\begin{remark}
\label{remark:iid}
In the important special case when $Y_j, \ j=1,\ldots,n$ are i.i.d. copies of $Y$, we will often use the following equivalent form of of the bound: assume that $\sigma^2\geq\| \mb EY^2 \|$, then replacing $t$ by $\sigma \sqrt{s}$ and setting $\theta:=\sqrt{\frac{s}{n}}\frac{1}{\sigma}$ implies that 
\begin{align}
\label{th:iid}
&
\Pr\l( \l\| \wh T_\theta^{(0)} - \mb EY \r\| \geq \sigma \sqrt{\frac{s}{n}} \r)
\leq 2d \exp\l( -s/2 \r),
\end{align}
where $\wh T_\theta^{(0)}$ was defined in \eqref{eq:T^0}.
\end{remark}
\begin{proof}
As before, set 
$X_j:=\psi\l( \theta Y_j \r), \ j=1,\ldots,n$. 
Then 
\begin{align*}
\Pr\Bigg( \lambda_{\mx}&\l( \frac{1}{\theta}\sum_{j=1}^n \l(X_j - \theta\mb E Y_j\r) \r) \geq s \Bigg)  \\
&
=\Pr\l( \exp\l( \lambda_{\mx}\l( \sum_{j=1}^n \l(X_j - \theta\mb E Y_j\r) \r)  \r) \geq e^{\theta s} \r) \\
&
\leq e^{-\theta s}\mb E\, \tr \exp\l( \sum_{j=1}^n \l( X_j -\theta\mb EY_j\r) \r) 
\leq e^{-\theta s} \, \tr \exp\l( \frac{\theta^2}{2}\sum_{j=1}^n \mb EY_j^2 \r)  \\
&
\leq d\exp\l( -\theta s + \frac{\theta^2}{2}\l\| \sum_{j=1}^n \mb EY_j^2 \r\|  \r),
\end{align*}
where we used Chebyshev's inequality, the fact that $e^{\lambda_{\mx}(A)}=\lambda_{\mx}(e^A)$ and the inequality $\lambda_{\mx} (e^A)\leq \tr e^A$ on the second step, the first inequality of Lemma \ref{lemma:main} on the third step, and the bound 
$\tr e^A\leq d \, e^{\|A\|}$ on the last step (here and below, $A\in \mb C^{d\times d}$ is an arbitrary self-adjoint matrix). 
Similarly, since $-\lambda_{\mn}(A)=\lambda_{\mx}(-A)$, we have 
\begin{align*}
\Pr\Bigg( \lambda_{\mn} & \l( \frac{1}{\theta}\sum_{j=1}^n \l(X_j - \theta\mb E Y_j\r) \r) \leq -s\Bigg) \\
& = \Pr\l( \lambda_{\mx}\l( \frac{1}{\theta}\sum_{j=1}^n \l(\theta\mb EY_j - X_j\r) \r) \geq s\r) \\
&
\leq e^{-\theta s}\mb E \tr \exp\l( \sum_{j=1}^n \l(\theta\mb EY_j - X_j\r) \r)
\leq e^{-\theta s} \, \tr \exp\l( \frac{\theta^2}{2}\sum_{j=1}^n \mb EY_j^2 \r)  \\
&
\leq d\exp\l( -\theta s + \frac{\theta^2}{2}\l\| \sum_{j=1}^n \mb EY_j^2 \r\|  \r),
\end{align*}
where we used the second inequality of Lemma \ref{lemma:main} instead. 
The result follows by taking $s:=t\sqrt{n}$ since for a self-adjoint matrix $A$, $\|A\|=\max\l( \r. \lambda_{\mx}(A),$
$ -\lambda_{\mn}(A) \l.\r)$. 
\end{proof}

The main weakness of the estimator $\wh T_\theta^{0}$ discussed above is the fact that the ``variance term'' $\l\| \sum_{j=1}^n \mb EY_j^2 \r\|$ appearing in the bound is akin to the second moment (the price we pay for applying bias-producing truncation) while we would like to replace it by $\l\| \sum_{j=1}^n \mb E(Y_j-\mb EY_j)^2 \r\|$. 
This problem will be addressed in detail in section \ref{section:shift}. 
In particular, we will show the following: assume that $Y_1,\ldots,Y_n$ are i.i.d. copies of $Y$, $\sigma_0^2 \geq \l\| \mb E(Y-\mb EY)^2 \r\|$, 
$\theta_0=\sqrt{\frac{2t}{n}}\frac{1}{\sigma_0}$, and $n$ is large enough ($n\gtrsim d^2$). 
Then, with exponentially high probability with respect to $s$, the solution $\wh T_{\theta_0}^\ast$ of equation \eqref{eq:matrix-catoni2} satisfies $\l\| \wh T_{\theta_0}^\ast - \mb EY \r\|\leq C\sigma_0\sqrt{\frac{s}{n}}$ for an absolute constant $C>0$. 
Another problem is the fact that one needs to know the value of $\l\| \sum_{j=1}^n \mb EY_j^2 \r\|$ (or its tight upper bound) a priori to choose the ``optimal'' value of parameter $\theta$. 
This issue and its resolution based on adaptive estimators  is discussed in section \ref{section:variance}. 
We conclude this discussion with few additional comments.
\begin{remark}
\begin{asparaenum}
\item 
Sub-Gaussian guarantees provided by Theorem \ref{th:main} hold for a given confidence parameter $t>0$ that has to be fixed a priori: in particular, the optimal value of $\theta$ depends it. 
However, as it was noted in \cite{devroye2015sub}, this is sufficient to construct (via Lepski's method \cite{lepskii1992asymptotically}) estimators that admit sub-Gaussian tails uniformly over $t$ in a certain range. 
\item 
Let $Y_1,\ldots,Y_n\in \mb C^{d\times d}$ be i.i.d. copies of $Y$, and $\sigma_0^2=\| \mb E(Y - \mb EY)^2 \|$. 
It is interesting to compare our estimator (in particular, bound (\ref{th:iid})) to the guarantees for the sample mean $\frac{1}{n}\sum_{j=1}^n Y_j$. 
Under an additional restrictive boundedness assumption requiring that $\|Y\|\leq M$ almost surely, 
the noncommutative Bernstein's inequality (see Theorem 1.4 in \cite{tropp2012user}) implies that
$
\l\| \frac{1}{n}\sum_{j=1}^n Y_j -\mb EY \r\|\leq 2\sigma_0 \sqrt{\frac{t}{n}}\vee \frac{4}{3} \frac{M t}{n}
$
with probability $\geq 1-2d e^{-t/2}$. 
Hence, even under additional strong assumptions our technique allows to obtain guarantees that compare favorably to the sample mean. 
However, as noted in \cite{tropp2012user}, in the case when $\|Y\|\leq M$ almost surely, the size of 
$\mb E \l\| \frac{1}{n}\sum_{j=1}^n Y_j -\mb EY \r\|$ is controlled by $\sigma_0^2$ while the scale of deviations of the random variable
$
\l| \l\| \frac{1}{n}\sum_{j=1}^n Y_j -\mb EY \r\| - \mb E \l\| \frac{1}{n}\sum_{j=1}^n Y_j -\mb EY \r\|\r|
$
depends on the ``weak variance'' parameter 
$\sigma^2_\ast = \sup_{\|v\|_2=1}\mb E \dotp{(Y - \mb EY) v}{v}^2\leq \sigma_0^2$. It is not clear if similar improvements are achievable in the case of heavy-tailed distributions; see Remark \ref{remark:sigma} for additional comments.
\end{asparaenum}
\end{remark}

\subsection{Bounds depending on the effective dimension}

The bound obtained in Theorem \ref{th:main} explicitly depends on the dimension $d$ of random matrices.  
Example is subsection \ref{ex:dimension} below shows that the dimensional factor in the right-hand side of the inequality is unavoidable in general. However, it is possible to prove a similar inequality which only includes the ``effective dimension'' defined as 
\begin{align}
\label{effdim}
\bar d:=\frac{\tr \l( \sum_{j=1}^n \mb EY_j^2\r) }{\l\| \sum_{j=1}^n \mb EY_j^2  \r\|}
\end{align}
which can be much smaller than $d$ if $\sum_{j=1}^n \mb EY_j^2$ has many eigenvalues that are close to $0$. 
The following result holds:
\begin{theorem}
\label{th:intdim}
Let $Y_1,\ldots,Y_n\in \mb C^{d\times d}$ be a sequence of independent self-adjoint random matrices, and 
$\sigma_n^2\geq\l\| \sum_{j=1}^n \mb EY_j^2 \r\|.$ 
Then 
\[
\Pr\l( \l\| \sum_{j=1}^n \l(\frac{1}{\theta}\psi\l( \theta Y_j \r) - \mb EY_j \r) \r\| \geq t\sqrt n \r)
\leq 2 \bar d \l( 1 + \frac{1}{\theta t \sqrt n} \r) \exp\l( -\theta t\sqrt{n} + \frac{\theta^2\sigma_n^2}{2} \r).
\]
\end{theorem}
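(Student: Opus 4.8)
The goal is to sharpen Theorem~\ref{th:main} by replacing the dimensional factor $d$ with the effective dimension $\bar d$ (plus a lower-order correction). The starting point is exactly the chain of inequalities in the proof of Theorem~\ref{th:main}: by Lemma~\ref{lemma:main}, for $\theta>0$ and $s>0$,
\[
\Pr\l( \lambda_{\mx}\l( \tfrac{1}{\theta}\sum_{j=1}^n (X_j-\theta\mb EY_j) \r)\geq s \r)\leq e^{-\theta s}\,\tr\exp\l( \tfrac{\theta^2}{2}\sum_{j=1}^n \mb EY_j^2 \r),
\]
and the same bound for $\lambda_{\mn}$ via \eqref{main:02}. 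In the proof of Theorem~\ref{th:main} we crudely bounded $\tr e^A\leq d\,e^{\|A\|}$; here instead I would keep the trace and estimate it more carefully in terms of $\tr A$ and $\|A\|$ with $A=\tfrac{\theta^2}{2}\sum_j\mb EY_j^2\succeq 0$.

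**The key trace estimate.** The heart of the matter is a bound of the form $\tr e^A\leq \bar d\,(1+\|A\|)\,e^{\|A\|}$ for nonnegative definite $A$ with $\bar d=\tr A/\|A\|$. This follows from the scalar inequality $e^x\leq 1+x\,e^{\|A\|}$ valid for $0\leq x\leq\|A\|$ (since $x\mapsto(e^x-1)/x$ is increasing, so $(e^x-1)/x\leq (e^{\|A\|}-1)/\|A\|\leq e^{\|A\|}$): applying it eigenvalue-by-eigenvalue to $A$ and summing gives $\tr e^A\leq d + (\tr A)\,e^{\|A\|}$. Writing $d=\tr A/\|A\|\cdot(\|A\|/\dots)$ is not quite right since eigenvalues can be small; instead note $d\leq \bar d\cdot(\text{something})$ fails in general, so I would instead use $e^x\leq 1+x\,e^{\|A\|}$ more directly: summing, $\tr e^A\leq \sum_j(1+\lambda_j(A)e^{\|A\|})$ — this still has a $d$ in it. The correct route is to separate: since each $\lambda_j(A)\le\|A\|$, we have $e^{\lambda_j(A)}\le e^{\|A\|}$ trivially, but to get $\bar d$ we use $e^{x}\le \frac{e^{\|A\|}-1}{\|A\|}\,x + 1 \le \frac{x}{\|A\|}e^{\|A\|}+1$ only when we can absorb the $+1$; here one writes $\tr e^A \le d + \frac{e^{\|A\|}}{\|A\|}\tr A = d + \bar d\, e^{\|A\|}$, and then bounds $d \le \bar d$ is false — so the right statement must be the asymmetric one $\tr e^A\le \bar d\,e^{\|A\|} + d$, and one checks $d\le \bar d e^{\|A\|}/\dots$; to land exactly on $\bar d(1+\frac{1}{\theta t\sqrt n})e^{\|A\|}$ one sets $\|A\|=\frac{\theta^2\sigma_n^2}{2}$, multiplies by $e^{-\theta s}$ with $s=t\sqrt n$, and observes $d\,e^{-\theta t\sqrt n}\le \frac{\bar d}{\theta t\sqrt n}e^{-\theta t\sqrt n + \|A\|}$ after bounding $d$ via... actually the cleanest path, which I would write out, is: $\tr e^A \le d + \bar d\,e^{\|A\|}$, hence the one-sided tail is $\le e^{-\theta s}(d + \bar d\, e^{\theta^2\sigma_n^2/2})$, and then for the range of $s$ of interest one shows $d\,e^{-\theta s}\le \frac{\bar d}{\theta s}e^{-\theta s + \theta^2\sigma_n^2/2}$ — this reduces to $d\,\theta s\le \bar d\, e^{\theta^2\sigma_n^2/2}$, which need not hold for all parameters, so I expect the actual argument instead replaces $d$ in $\tr e^A\le d+\bar d e^{\|A\|}$ by noting $\tr e^A = \tr(e^A - I) + d$ and $\tr(e^A-I)\le \|A\|^{-1}(e^{\|A\|}-1)\tr A$ only gives $\bar d(e^{\|A\|}-1)$, whence $\tr e^A\le \bar d(e^{\|A\|}-1)+d$; combining with the harder bound requires tracking that $d\le\bar d$ is generally false but $d$ enters multiplied by $e^{-\theta s}$.

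**Reconciling the constants.** Given the delicacy above, the clean statement I would actually prove and verify is: for $A\succeq 0$,
\[
\tr e^{A}\ \le\ \bar d\,\Big(1+\tfrac{1}{\|A\|}\Big)\,e^{\|A\|},\qquad \bar d=\frac{\tr A}{\|A\|},
\]
obtained from $e^x\le (1+x)e^x$ trivially refined as $e^x\le e^{\|A\|}+\frac{x}{\|A\|}\,e^{\|A\|}$ — wait, this needs $e^x-e^{\|A\|}\le \frac{x}{\|A\|}e^{\|A\|}$, i.e. $e^{x}\le e^{\|A\|}(1+x/\|A\|)$, which is FALSE at $x=\|A\|$. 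The valid convexity bound is $e^x\le 1+\frac{x}{\|A\|}(e^{\|A\|}-1)\le 1+\frac{x}{\|A\|}e^{\|A\|}$ for $x\in[0,\|A\|]$; summing over eigenvalues: $\tr e^A\le d+\bar d\,e^{\|A\|}$. Then with $A=\frac{\theta^2}{2}\sum_j\mb EY_j^2$, $\|A\|\le\frac{\theta^2\sigma_n^2}{2}$, the one-sided tail is at most $e^{-\theta t\sqrt n}(d+\bar d\,e^{\theta^2\sigma_n^2/2})$. Finally I would bound $d\le\bar d\cdot\frac{1}{\theta t\sqrt n}\,e^{\theta^2\sigma_n^2/2}$ — which is precisely the claim when $d\,\theta t\sqrt n\le\bar d\,e^{\theta^2\sigma_n^2/2}$; since the theorem as stated carries no such restriction, I suspect the intended argument bounds $d$ differently, perhaps using $\tr(e^A-I)\ge \lambda_{\mx}(e^A-I)=e^{\|A\|}-1$ is the wrong direction. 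The honest conclusion: the main obstacle is getting the constant $(1+\frac{1}{\theta t\sqrt n})$ exactly right, and I would resolve it by writing the tail as $e^{-\theta t\sqrt n}\tr e^A$ with $\tr e^A\le d+\bar d e^{\|A\|}$, then observing that replacing the trivial $\tr e^A\le d\,e^{\|A\|}$ bound used in Theorem~\ref{th:main}, the factor $d$ should itself be controlled by $\bar d$ times a correction — the cleanest rigorous version uses $\tr e^A\le \bar d(1+\|A\|)e^{\|A\|}$ via the scalar inequality $1+\frac{x}{\|A\|}e^{\|A\|}\le \frac{x}{\|A\|}(1+\|A\|)e^{\|A\|}$ valid when $x\ge 1$... so one splits eigenvalues of $\sum_j\mb EY_j^2$ above and below a threshold. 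Adding over both tails ($\lambda_{\mx}$ and $\lambda_{\mn}$) and using $\|B\|=\max(\lambda_{\mx}(B),-\lambda_{\mn}(B))$ then yields the stated bound with the factor $2\bar d(1+\frac{1}{\theta t\sqrt n})$. I would present exactly this: the scalar trace lemma, substitution $\|A\|=\theta^2\sigma_n^2/2$, multiplication by $e^{-\theta t\sqrt n}$, and the union over the two one-sided events.
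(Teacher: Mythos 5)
Your proposal correctly identifies that the crude bound $\tr e^A\le d\,e^{\|A\|}$ from Theorem~\ref{th:main} must be replaced by something involving $\bar d=\tr A/\|A\|$, and you even write down the right trace estimate along the way: for $A\succeq 0$, $\tr(e^A-I)\le \bar d\,(e^{\|A\|}-1)$. But the proposal never reaches a complete argument, and you say so yourself. The reason is that you insist on bounding $\tr e^A$ rather than $\tr(e^A-I)$, which forces you to write $\tr e^A = \tr(e^A-I) + d$ and then try to absorb the residual $d$ into the exponential; as you correctly observe, this cannot be done uniformly in the parameters. The fallback estimate you float, $\tr e^A\le \bar d\,(1+\|A\|)\,e^{\|A\|}$, is simply false in general: take $A$ of rank one with eigenvalue $\|A\|\to 0$ and $d$ large, then $\tr e^A = (d-1)+e^{\|A\|}$ while the right-hand side tends to $\bar d=1$, so the inequality fails for $d\ge 2$. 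Splitting eigenvalues by a threshold would not repair this without introducing a $d$-dependence.

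The idea you are missing, which is exactly how the paper handles it, is to change the function in the Markov (Chebyshev) step. Instead of exponentiating and using $\Pr(\lambda_{\max}\ge \theta s)\le e^{-\theta s}\,\mb E\tr e^A$, apply Markov with the nondecreasing nonnegative function $\phi(x)=\max(e^x-1,0)$. This gives
\[
\Pr\left(\lambda_{\max}\left(\frac1\theta\sum_j (X_j-\theta\mb EY_j)\right)\ge s\right)\ \le\ \frac{1}{e^{\theta s}-1}\,\Big(\mb E\,\tr e^{\sum_j(X_j-\theta\mb EY_j)} - d\Big),
\]
so that after invoking Lemma~\ref{lemma:main} it is precisely $\tr(e^{A'}-I)$ with $A'=\frac{\theta^2}{2}\sum_j\mb EY_j^2\succeq 0$ that needs to be controlled, and your trace estimate then applies with \emph{no} leftover $d$: $\tr(e^{A'}-I)\le\bar d\,(e^{\|A'\|}-1)$. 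The same change in Markov functional is also what produces the correction factor: $\frac{1}{e^{\theta s}-1}=e^{-\theta s}\cdot\frac{e^{\theta s}}{e^{\theta s}-1}\le e^{-\theta s}(1+\frac{1}{\theta s})$, using $e^{\theta s}-1\ge\theta s$. Substituting $s=t\sqrt n$, $\|A'\|\le\frac{\theta^2\sigma_n^2}{2}$, and adding the symmetric bound for $\lambda_{\min}$ then gives exactly $2\bar d(1+\frac{1}{\theta t\sqrt n})\exp(-\theta t\sqrt n+\frac{\theta^2\sigma_n^2}{2})$. In short: switch to $\phi(x)=\max(e^x-1,0)$ at the Markov step; the rest of your plan (your trace lemma via the power-series/conjugation-by-$\sqrt{A}$ argument, and the final union over the two one-sided tails) is sound.
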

\begin{remark}
As before, we can set $\theta=\frac{t\sqrt{n}}{\sigma_n^2}$ to get 
\[
\Pr\l( \l\| \sum_{j=1}^n \l(\frac{1}{\theta}\psi\l( \theta Y_j \r) - \mb EY_j \r) \r\| \geq t\sqrt n \r)
\leq 2 \bar d \l( 1 + \frac{\sigma_n^2/n}{t^2} \r) \exp\l( -\frac{t^2}{2\sigma_n^2/n} \r).
\]
For the values of $t\geq \sqrt{\sigma_n^2/n}$ (when the bound becomes useful), it further simplifies to  
\[
\Pr\l( \l\| \sum_{j=1}^n \l(\frac{1}{\theta}\psi\l( \theta Y_j \r) - \mb EY_j \r) \r\| \geq t\sqrt n \r)
\leq 4 \bar d \exp\l( -\frac{t^2}{2\sigma_n^2/n} \r).
\] 
For the ``sub-exponential regime'' with $\theta=\frac{\sqrt{n}}{\sigma_n^2}$, we get that for all $t\geq \frac{1}{2}\vee \sigma_n^2/n$ simultaneously,
\[
\Pr\l( \l\| \sum_{j=1}^n \l(\frac{1}{\theta}\psi\l( \theta Y_j \r) - \mb EY_j \r) \r\| \geq t\sqrt n \r)
\leq 4 \bar d \exp\l( -\frac{2t-1}{2\sigma_n^2/n} \r).
\]
\end{remark}

\begin{proof}
The argument is similar in spirit to the proof of Theorem \ref{th:main}. 
Details are included in appendix \ref{proof:intdim}.
\end{proof}

\subsubsection{Dimensional factor in Theorem \ref{th:main}}
\label{ex:dimension}
Example below shows that the dimensional factor in Theorem \ref{th:main} is unavoidable in general. 
Assume that $\psi(x)=\psi_1(x)$ as defined in \eqref{eq:psi2}, $\theta=1$, $n=d$, and let $Y_j, \ j\leq d$ be independent and such that 
$\psi_1(Y_j)=\gamma_j \, e_j e_j^T$, where $\gamma_j, \ j\leq d$ are i.i.d. random variables with density $p(x) = e^{- 2|x|}$, and $\{e_1,\ldots,e_d\}$ is the standard Euclidean basis. 
Recalling that $Y_j = \psi_1^{-1}(\gamma_j)e_j e_j^T$, it is easy to check that $\mb EY_j = 0_{d\times d}$, and that 
$\l\| \sum_{j=1}^d \mb EY_j^2 \r\| = \mb E \l( \psi_1^{-1}(\gamma_1) \r)^2<\infty$. 
Theorem \ref{th:main} implies that 
\[
\Pr\l( \l\| \sum_{j=1}^d \gamma_j  e_j e_j^T \r\| \geq s \r)\leq f(d) e^{-s}
\] 
with $f(d)\leq Cd$ for some absolute constant $C$.  
Since $\l\| \sum_{j=1}^d \gamma_j e_j e_j^T \r\| = \max\l( |\gamma_1|,\ldots,|\gamma_d| \r)$, it follows from Lemma \ref{lemma:lower} that 
\[
\Pr\l(  \l\|  \sum_{j=1}^d \gamma_j \, e_j e_j^T  \r\| \geq \l(\frac{1}{2} - \tau \r)\log d  \r) \geq c(\tau)
\] 
for any $0<\tau<1/2$ and some constant $c(\tau)>0$. 
This shows that the dimensional factor $f(d)$ can not grow slower than $d^{1/2-\tau}$ for any $\tau>0$.

\subsection{Bounds for arbitrary rectangular matrices}

In this section, we will deduce results for arbitrary matrices from the bounds for self-adjoint operators. 
Let $Y_1,\ldots,Y_n\in \mb C^{d_1\times d_2}$ be independent, and assume that 
\[
\sigma_n^2\geq \max\l( \l\|  \sum_{j=1}^n \mb E Y_j Y_j^\ast \r\|, \l\|  \sum_{j=1}^n \mb E Y_j^\ast Y_j \r\| \r).
\]
Given $\theta>0$, set 
$X_j:=\psi\l( \theta\m H(Y_j) \r)$ (where $\m H(\cdot)$ is the self-adjoint dilation, see equation (\ref{eq:dilation})) and 
define $\hat T\in \mb C^{(d_1+d_2)\times (d_1+d_2)}$ as
\begin{align*}
&
\widehat T:=\widehat T(\theta)=\sum_{j=1}^n\frac{1}{\theta} X_j.
\end{align*}
Let $\hat T_{11}\in \mb C^{d_1\times d_1}$, $\hat T_{22}\in \mb C^{d_2\times d_2}$, $\hat T_{12}\in \mb C^{d_1\times d_2}$ be such that 
$\widehat T=\begin{pmatrix}
\hat T_{11} & \hat T_{12} \\
\hat T^\ast_{12} & \hat T_{22}
\end{pmatrix}.$ 
Since $\widehat T$ is ``close'' to $\sum_{j=1}^n \m H\l( \mb EY_j \r)$ for the proper choice of $\theta$, it is natural to expect that 
$\hat T_{12}$ is close to $\sum_{j=1}^n \mb EY_j$. 
\begin{corollary}
\label{cor:rectangular}
Under the assumptions stated above,
\[
\Pr\l( \l\| \hat T_{12} - \sum_{j=1}^n \mb EY_j  \r\| \geq t\sqrt n \r)
\leq 2(d_1+d_2) \exp\l( -\theta t\sqrt{n} + \frac{\theta^2\sigma_n^2}{2} \r)
\]
and
\[
\Pr\l( \l\| \hat T_{12} - \sum_{j=1}^n \mb EY_j  \r\| \geq t\sqrt n \r)
\leq 2\bar d\l( 1+\frac{1}{\theta t\sqrt{n}} \r) \exp\l( -\theta t\sqrt{n} + \frac{\theta^2\sigma_n^2}{2} \r),
\]
where 
$\bar d=2 \frac{  \tr\l(\sum_{j=1}^n \mb E Y_j^\ast Y_j\r) }{ \l\|  \sum_{j=1}^n \mb E Y_j Y_j^\ast \r\| \vee \l\|  \sum_{j=1}^n \mb E Y_j^\ast Y_j \r\| }$.
\end{corollary}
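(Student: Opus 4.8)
The plan is to reduce Corollary \ref{cor:rectangular} to Theorem \ref{th:main} (and Theorem \ref{th:intdim}) applied to the sequence of self-adjoint dilations $\m H(Y_1),\ldots,\m H(Y_n)\in \mb C^{(d_1+d_2)\times(d_1+d_2)}$. First I would observe that $\m H$ is linear, so $\mb E\,\m H(Y_j)=\m H(\mb E Y_j)$, and that by the block computation following \eqref{eq:dilation} we have
\[
\m H(Y_j)^2=\begin{pmatrix} Y_j Y_j^\ast & 0 \\ 0 & Y_j^\ast Y_j \end{pmatrix},
\]
so $\sum_{j=1}^n \mb E\,\m H(Y_j)^2$ is block-diagonal and its operator norm equals $\max\bigl(\|\sum_j \mb E Y_j Y_j^\ast\|,\|\sum_j \mb E Y_j^\ast Y_j\|\bigr)\le \sigma_n^2$. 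Hence the hypothesis of Theorem \ref{th:main} is met with the same $\sigma_n^2$ and dimension $d_1+d_2$.

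Next I would apply Theorem \ref{th:main} to $\{\m H(Y_j)\}$, which gives, for all $\theta>0$,
\[
\Pr\!\left( \left\| \sum_{j=1}^n \left(\tfrac{1}{\theta}\psi(\theta\,\m H(Y_j)) - \m H(\mb EY_j)\right) \right\| \ge t\sqrt n \right) \le 2(d_1+d_2)\exp\!\left(-\theta t\sqrt n + \tfrac{\theta^2\sigma_n^2}{2}\right).
\]
By definition $\sum_j \tfrac1\theta\psi(\theta\,\m H(Y_j)) = \widehat T$, and $\sum_j \m H(\mb EY_j) = \m H\bigl(\sum_j \mb EY_j\bigr)$ has zero diagonal blocks and off-diagonal block $\sum_j\mb EY_j$. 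Therefore the matrix inside the norm is
\[
\widehat T - \m H\!\Bigl(\sum_{j=1}^n \mb EY_j\Bigr)=\begin{pmatrix} \widehat T_{11} & \widehat T_{12}-\sum_j\mb EY_j \\ (\widehat T_{12}-\sum_j\mb EY_j)^\ast & \widehat T_{22} \end{pmatrix},
\]
a matrix of exactly the form treated in Lemma \ref{lemma:dilation} with $S=\widehat T_{11}$, $T=\widehat T_{22}$, $A=\widehat T_{12}-\sum_j\mb EY_j$. That lemma gives
\[
\left\| \widehat T - \m H\!\Bigl(\sum_j \mb EY_j\Bigr) \right\| \ge \left\| \begin{pmatrix} 0 & A \\ A^\ast & 0 \end{pmatrix} \right\| = \|A\| = \left\| \widehat T_{12}-\sum_{j=1}^n\mb EY_j \right\|,
\]
so the event $\{\|\widehat T_{12}-\sum_j\mb EY_j\|\ge t\sqrt n\}$ is contained in the event bounded above, yielding the first displayed inequality of the corollary. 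For the second inequality I would instead invoke Theorem \ref{th:intdim} for $\{\m H(Y_j)\}$; the effective dimension there is $\tr\bigl(\sum_j \mb E\,\m H(Y_j)^2\bigr)/\|\sum_j \mb E\,\m H(Y_j)^2\|$, and since the trace of the block-diagonal matrix equals $\tr(\sum_j \mb E Y_jY_j^\ast)+\tr(\sum_j \mb E Y_j^\ast Y_j)=2\tr(\sum_j\mb E Y_j^\ast Y_j)$ (the two traces coincide) while the norm equals $\|\sum_j \mb E Y_jY_j^\ast\|\vee\|\sum_j \mb E Y_j^\ast Y_j\|$, this is precisely the $\bar d$ stated in the corollary; the same containment-of-events argument via Lemma \ref{lemma:dilation} then finishes the proof.

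There is no serious obstacle here — the corollary is genuinely a packaging of earlier results. The only point requiring a little care is the direction of the inequality in Lemma \ref{lemma:dilation}: one must note that it is the \emph{full} bordered matrix $\widehat T-\m H(\sum_j\mb EY_j)$ whose norm dominates that of the purely off-diagonal part, so that concentration of the former (which is what Theorems \ref{th:main}/\ref{th:intdim} deliver, since $\widehat T$ is a sum of $\tfrac1\theta\psi(\theta\,\m H(Y_j))$ and $\m H(\sum_j\mb EY_j)$ is its "target") automatically implies concentration of $\widehat T_{12}$ around $\sum_j\mb EY_j$. One should also remark explicitly that $\psi$ acts on $\m H(Y_j)$ in the sense of the matrix-function calculus of Definition \ref{matrix-function}, which is legitimate because $\m H(Y_j)$ is self-adjoint. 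Everything else is bookkeeping with block matrices and the linearity of $\m H$.
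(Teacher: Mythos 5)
Your proposal is correct and follows exactly the paper's own route: apply Theorem \ref{th:main} (resp.\ Theorem \ref{th:intdim}) to the Hermitian dilations $\m H(Y_j)$, compute $\sum_j \mb E\,\m H(Y_j)^2$ as a block-diagonal matrix to identify $\sigma_n^2$ and $\bar d$, and then invoke Lemma \ref{lemma:dilation} to pass from the dilation norm to $\|\widehat T_{12}-\sum_j\mb EY_j\|$. There is no material difference from the paper's argument.
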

\begin{proof}
Note that 
\[
\l\| \sum_{j=1}^n \mb E\,\m H(Y_j)^2 \r\| = \max\l( \l\|  \sum_{j=1}^n \mb E Y_j Y_j^\ast \r\|, \l\|  \sum_{j=1}^n \mb E Y_j^\ast Y_j \r\| \r)\leq \sigma_n^2.
\] 
Theorem \ref{th:main} applied to self-adjoint random matrices $\m H(Y_j)\in \mb C^{(d_1+d_2)\times (d_1+d_2)}, \ j=1,\ldots,n$ implies that 
$
\l\| \widehat T - \sum_{j=1}^n \m H(\mb E Y_j) \r\|\leq t\sqrt{n}
$
with probability $\geq 1-2(d_1+d_2) \exp\l( -\theta t\sqrt{n} + \frac{\theta^2\sigma_n^2}{2} \r)$. 
It remains to apply Lemma \ref{lemma:dilation}: 
\begin{align*}
\l\| \widehat T - \sum_{j=1}^n \m H(\mb E Y_j)\r\| & =
\l\| \begin{pmatrix}
\hat T_{11} & \hat T_{12} - \sum_{j=1}^n \mb E Y_j \\
\hat T^\ast_{12} - \sum_{j=1}^n \mb E Y^\ast_j & \hat T_{22}
\end{pmatrix} \r\|  \\
&\geq 
\l\|  \begin{pmatrix}
0 & \hat T_{12} - \sum_{j=1}^n \mb E Y_j \\
\hat T^\ast_{12} - \sum_{j=1}^n \mb E Y^\ast_j & 0
\end{pmatrix} \r\| = 
\l\|  \hat T_{12} - \sum_{j=1}^n \mb E Y_j  \r\|,
\end{align*}
and the first inequality follows. 
To obtain the second inequality, it is enough to use Theorem \ref{th:intdim} instead of Theorem \ref{th:main} and note that 
\[
\tr \l(\sum_{j=1}^n \mb E\m H(Y_j)^2\r) = \tr \l(\sum_{j=1}^n \mb E Y_j Y_j^\ast\r) + \tr \l(\sum_{j=1}^n \mb E Y_j^\ast Y_j\r) = 
2\tr \l(\sum_{j=1}^n \mb E Y_j^\ast Y_j\r)
\]
since for any $1\leq j\leq n$, $\tr \l(\mb E Y_j Y_j^\ast\r) =\mb E \tr(Y_j Y_j^\ast)=\mb E \tr(Y_j^\ast Y_j)$.
\end{proof}

In a particular case when $Y\in \mb R^d$ is a random vector such that $\mb EYY^T=\Sigma$ and $Y_1,\ldots,Y_n$ are its i.i.d. copies, 
$\max\l( \l\|  \sum_{j=1}^n \mb E Y_j Y_j^\ast \r\|, \l\|  \sum_{j=1}^n \mb E Y_j^\ast Y_j \r\| \r)=n\,\tr \Sigma$ and 
$\tr \l( \sum_{j=1}^n \mb EY_j^\ast Y_j \r)=n\, \tr\Sigma$, 
hence $\bar d=2$ and the estimator $\hat T_{12}$ admits the following bound: 
if we replace $t$ by $\sqrt{s}\sqrt{\tr\Sigma}$ and set $\theta=\sqrt{\frac{s}{n}}\frac{1}{\sqrt{\tr \Sigma}}$ in the second bound of Corollary \ref{cor:rectangular}, then
\[
\Pr\l( \l\| \frac{\hat T_{12}}{n} - \mb EY \r\|_2\geq \sqrt{\tr \Sigma}\sqrt{\frac{s}{n}}\r)\leq 4\l(1+1/s\r) e^{-s/2}.
\]

\subsection{Bounds under weaker moment assumptions}

In this section, we discuss the mean estimation problem under weaker moment conditions. 
Namely, assume that $Y_1,\ldots,Y_n$ are independent self-adjoint random matrices such that $\| \mb E|Y_j|^\alpha\|<\infty$ for some $\alpha\in (1,2]$ and all $1\leq j\leq n$. 
Let $\psi_\alpha$ satisfy 
\[
-\log(1-x+c_\alpha |x|^\alpha) \leq \psi_\alpha(x) \leq \log(1+x+c_\alpha |x|^\alpha)
\]
for all $x\in \mb R$, where $c_\alpha = \frac{\alpha-1}{\alpha}\vee \sqrt{\frac{2-\alpha}{\alpha}}$. 
The fact that such $\psi_\alpha$ exists follows from Lemma \ref{lemma:psi_alpha} in the appendix. 
For example, one can take $\psi_\alpha(x)=\log(1+x+c_\alpha |x|^\alpha)$. 
The following result holds:
\begin{theorem}
\label{th:psi_alpha}
Assume that $v^\alpha_{n}\geq \l\|  \sum_{j=1}^n \mb E|Y_j|^\alpha \r\|$. 
Then for any positive $t$ and $\theta$,
\[
\Pr\l( \l\| \sum_{j=1}^n \l(\frac{1}{\theta}\psi_\alpha(\theta Y_j) - \mb E Y_j \r) \r\| \geq t \r)\leq 2d\exp\l( -\theta t + c_\alpha \theta^\alpha v^\alpha_{n}\r).
\]
\end{theorem}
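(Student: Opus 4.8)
The plan is to mimic the proof of Theorem \ref{th:main} almost verbatim, with the quadratic majorant $I + \theta Y + \frac{\theta^2}{2}Y^2$ replaced by the $\alpha$-majorant $I + \theta Y + c_\alpha |\theta Y|^\alpha = I + \theta Y + c_\alpha \theta^\alpha |Y|^\alpha$ (using $|\theta Y|^\alpha = \theta^\alpha |Y|^\alpha$ for $\theta > 0$, where $|A| := (A^2)^{1/2}$ for self-adjoint $A$). So the first step is to prove the analogue of Lemma \ref{lemma:main}: for $X_j := \psi_\alpha(\theta Y_j)$,
\[
\mb E \tr \exp\l( \sum_{j=1}^n \l( X_j - \theta \mb E Y_j \r) \r) \leq \tr \exp\l( c_\alpha \theta^\alpha \sum_{j=1}^n \mb E |Y_j|^\alpha \r),
\]
and the same with $\theta \mb E Y_j - X_j$ in place of $X_j - \theta \mb E Y_j$. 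The derivation is identical to that of Lemma \ref{lemma:main}: peel off one term at a time using the semidefinite bound $\psi_\alpha(\theta Y_n) \preceq \log(I + \theta Y_n + c_\alpha \theta^\alpha |Y_n|^\alpha)$ (valid by the defining inequality for $\psi_\alpha$ together with Fact \ref{fact:01}), then apply Lieb's concavity theorem (Fact \ref{fact:05}) with $H = \sum_{j=1}^{n-1}(X_j - \theta\mb E Y_j) - \theta \mb E Y_n$ and Jensen's inequality for conditional expectation to pass the expectation inside the logarithm, obtaining $\log(I + \theta \mb E Y_n + c_\alpha \theta^\alpha \mb E |Y_n|^\alpha)$. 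One must check that $I + \theta Y_n + c_\alpha \theta^\alpha |Y_n|^\alpha \succ 0$ almost surely so the logarithm is well-defined; this follows from the scalar inequality $1 + x + c_\alpha|x|^\alpha > 0$ for all $x \in \mb R$, which is exactly the content of (part of) Lemma \ref{lemma:psi_alpha}. After iterating over $X_{n-1},\ldots,X_1$ one arrives at
\[
\mb E \tr \exp\l( \sum_{j=1}^n (X_j - \theta \mb E Y_j) \r) \leq \tr \exp\l( \sum_{j=1}^n \l( \log(I + \theta \mb E Y_j + c_\alpha \theta^\alpha \mb E |Y_j|^\alpha) - \theta \mb E Y_j \r) \r),
\]
and then $\log(1+x) \leq x$ (Fact \ref{fact:01}) gives $\log(I + \theta \mb E Y_j + c_\alpha \theta^\alpha \mb E |Y_j|^\alpha) - \theta \mb E Y_j \preceq c_\alpha \theta^\alpha \mb E |Y_j|^\alpha$, whence the claim via Fact \ref{fact:02}. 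The second inequality follows exactly as in Lemma \ref{lemma:main}, by applying the first one to the sequence $-Y_1,\ldots,-Y_n$ and using $-\psi_\alpha(\theta Y_j) \preceq \log(I - \theta Y_j + c_\alpha \theta^\alpha |Y_j|^\alpha) = \log(I + \theta(-Y_j) + c_\alpha \theta^\alpha |{-Y_j}|^\alpha)$, noting $|{-Y_j}|^\alpha = |Y_j|^\alpha$.

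The second step is the Chernoff/Markov argument, copied from the proof of Theorem \ref{th:main}. Writing $S_n := \sum_{j=1}^n(\frac{1}{\theta}\psi_\alpha(\theta Y_j) - \mb E Y_j)$, we bound
\[
\Pr\l( \lmax{S_n} \geq t \r) = \Pr\l( e^{\lmax{\theta S_n}} \geq e^{\theta t} \r) \leq e^{-\theta t}\, \mb E \tr \exp\l( \sum_{j=1}^n (X_j - \theta \mb E Y_j) \r) \leq e^{-\theta t}\, \tr \exp\l( c_\alpha \theta^\alpha \sum_{j=1}^n \mb E |Y_j|^\alpha \r),
\]
using $e^{\lmax{A}} = \lmax{e^A} \leq \tr e^A$, Chebyshev, and the first inequality from step one; then $\tr e^A \leq d\, e^{\|A\|}$ and $\l\| \sum_j \mb E|Y_j|^\alpha \r\| \leq v_n^\alpha$ give $\Pr(\lmax{S_n} \geq t) \leq d \exp(-\theta t + c_\alpha \theta^\alpha v_n^\alpha)$. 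The symmetric bound on $\Pr(\lambda_{\mn}(S_n) \leq -t) = \Pr(\lmax{-S_n} \geq t)$ uses the second inequality from step one. Combining the two via $\|A\| = \max(\lmax{A}, -\lambda_{\mn}(A))$ and a union bound yields the factor $2d$ and completes the proof.

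I do not anticipate a serious obstacle here — the argument is structurally the same as Theorem \ref{th:main}, and everything about the function $\psi_\alpha$ (its existence, the two-sided logarithmic bounds, positivity of the majorant) is delegated to Lemma \ref{lemma:psi_alpha}. The only points requiring a little care are bookkeeping ones: making sure $|\theta Y|^\alpha = \theta^\alpha|Y|^\alpha$ for $\theta>0$ and that $|{-Y}|^\alpha = |Y|^\alpha$ so that the "flip to $-Y_j$" trick goes through, and confirming that $\mb E|Y_j|^\alpha$ is well-defined as a matrix (the assumption $\|\mb E|Y_j|^\alpha\| < \infty$ is exactly this). No new analytic input beyond Facts \ref{fact:01}, \ref{fact:02}, \ref{fact:05} and Lemma \ref{lemma:psi_alpha} is needed.
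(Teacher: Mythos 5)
Your proposal is correct and is exactly the argument the paper intends: the paper's proof of Theorem \ref{th:psi_alpha} is a one-line remark saying to repeat Lemma \ref{lemma:main} and Theorem \ref{th:main} with Fact \ref{fact:04} replaced by Lemma \ref{lemma:psi_alpha}, and you have carried out precisely that substitution, including the bookkeeping points ($|\theta Y|^\alpha = \theta^\alpha|Y|^\alpha$ for $\theta>0$, $|{-Y}|^\alpha = |Y|^\alpha$, and positivity of the matrix $I+\theta Y + c_\alpha\theta^\alpha|Y|^\alpha$) that make the substitution go through.
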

\begin{proof}
The argument repeats the steps of Lemma \ref{lemma:main} and Theorem \ref{th:main}, the only difference being that application of Fact \ref{fact:04} is replaced by Lemma \ref{lemma:psi_alpha}.
\end{proof}
\begin{remark}
In the special case when $Y_1,\ldots,Y_n$ are i.i.d. copies of $Y$ with $v=\| \mb E|Y|^\alpha \|^{1/\alpha}$, setting 
$t=v n^{1/\alpha} s^{\frac{\alpha-1}{\alpha}}$ and 
$\theta=\l(\frac{1}{\alpha c_\alpha} \r)^{1/(\alpha-1)}\l(\frac{s}{n}\r)^{1/\alpha}\frac{1}{v}$ gives the inequality 
\[
\Pr\l( \l\| \frac{1}{n\theta}\sum_{j=1}^n \psi_\alpha(\theta Y_j) - \mb E Y  \r\| \geq  v \l(\frac{s}{n}\r)^{\frac{\alpha-1}{\alpha}} \r)\leq 
2d\exp\l( -\frac{\alpha-1}{\alpha}\l(\frac{1}{\alpha c_\alpha}\r)^{1/(\alpha-1)}s\r).
\]
Note that for $\alpha=2$, we recover (\ref{th:iid}).
\end{remark}

Before we proceed with discussion or further improvements and adaptation issues, let us demonstrate applications of developed techniques to popular problems in statistics and highlight the advantages over existing results.

\section{Examples}
\label{section:examples}

We present two examples which highlight the potential improvements obtained via our technique in popular scenarios: estimation of the covariance matrix in Frobenius and operator norms, and low-rank matrix completion problem. 

\subsection{Estimation of the covariance matrix in operator norm}

Let $Z\in \mb R^d$ be a random vector with 
$\mb EZ=\mu$, $\mb E \|Z-\mu\|_2^4<\infty$, $\Sigma=\mb E\l[ (Z-\mu)(Z-\mu)^T \r]$, and let 
$Z_1,\ldots, Z_{2n}$ be i.i.d. copies of $Z$. 
Let us first assume that $\mu=0$, and define 
\begin{align*}
&
\widetilde \Sigma_{2n}(\theta) = \frac{1}{2n\theta}\sum_{j=1}^{2n} \psi\l(  \theta Z_j Z_j^T  \r), 
\end{align*}
where $\psi(\cdot)$ satisfies (\ref{eq:psi}). 
Let $\sigma^2\geq\l\|\mb E\|Z\|_2^2 Z Z^T \r\|$ and $\tilde\theta = \sqrt{\frac{t}{n}}\frac{1}{\sigma}$. 
It is straightforward to deduce from Theorem \ref{th:main} that with probability $\geq 1- 2d e^{-t}$,
\[
\l\| \widetilde\Sigma_{2n}(\tilde\theta) - \Sigma\r\|\leq \sigma \sqrt{\frac{t}{n}}.
\]
\begin{remark}
\label{remark:sigma}
\begin{asparaenum}
\item 
Note that for any matrix $X=\lambda U U^T$ of rank $1$ (where $\|U\|_2=1$), 
\[
\psi(X)=\psi(\lambda)U U^T \text{ (since $\psi(0)=0$)},
\] 
hence 
$\tilde \Sigma_{2n}(\tilde\theta) = \frac{1}{2n \tilde\theta}\sum_{j=1}^{2n} \psi(\tilde\theta \|Z_j\|_2^2)\frac{Z_j Z_j^T}{\|Z_j\|_2^2}$. 
In particular, this expression is easy to evaluate numerically; in general, computation of the estimator \eqref{eq:T^0} requires $n$ singular value decompositions. 
\item 
Parameter $\sigma$ is closely related to the \textit{effective rank} defined as 
$\mathrm{r}(\Sigma)=\frac{\tr(\Sigma)}{\|\Sigma\|}$ \cite{vershynin2010introduction}; clearly, it always true that $\mathrm{r}(\Sigma)\leq d$.  
The quantity $\sqrt{\mathrm{r}(\Sigma)} \|\Sigma\|$ has been shown to control the expected error of the sample covariance estimator in the Gaussian setting \cite{koltchinskii2017concentration}. 
Under the additional assumption that the kurtosis of the linear forms $\langle Z,v \rangle, \ v\ne 0$, is uniformly bounded by $K$, 
it is possible to show that (see Lemma 2.3 in \cite{S.-Minsker:2017aa}) that 
$\sigma^2\leq K \,\mathrm{r}(\Sigma) \, \|\Sigma\|^2$.  
On the other hand, fluctuations of the error around its expected value in the Gaussian case  \cite{koltchinskii2017concentration} are controlled by the ``weak variance'' 
$\sup_{v\in\mb R^d:\|v\|_2=1}\mb E^{1/2}\dotp{Z}{v}^4\leq \sqrt{K} \|\Sigma\|$, while in our bounds fluctuations are controlled by the ``strong variance'' $\sigma^2$; this fact leaves room for improvement in our construction and proof techniques. 
\end{asparaenum}
\end{remark}

Of course, the initial assumption that $\mu$ is known is often unrealistic, hence we modify the estimator as follows. 
Given $\theta>0$, set 
\begin{align*}
&Y_j = \frac{1}{2}\l( Z_{2j-1} - Z_{2j} \r)\l( Z_{2j-1} - Z_{2j} \r)^T,
\\
&\widehat \Sigma_{2n}(\theta) = \frac{1}{n\theta}\sum_{j=1}^n \psi(\theta Y_j). 
\end{align*}
Let
$
\hat\sigma^2\geq \frac{1}{2} \l\|  \mb E\l(  (Z-\mu)(Z-\mu)^T \r)^2 + \tr(\Sigma)\Sigma +2\Sigma^2 \r\|, 
$ 
and $\hat\theta = \sqrt{\frac{t}{n}}\frac{1}{\hat\sigma}$. 
Our covariance estimator is then defined as 
$\widehat \Sigma_{2n}:=\widehat \Sigma_{2n}(\hat\theta).$  
The following result can be deduced from Theorem \ref{th:main}:
\begin{corollary}
\label{corollary:cov}
With probability $\geq 1-2d\, e^{-t}$,
\[
\l\| \widehat\Sigma_{2n} - \Sigma  \r\| \leq \sqrt{2}\hat\sigma\sqrt{\frac{t}{n}}.
\]
\end{corollary}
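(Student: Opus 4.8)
The plan is to recognize the covariance estimator $\widehat\Sigma_{2n}=\widehat\Sigma_{2n}(\hat\theta)$ as the preliminary estimator $\wh T^{(0)}_\theta$ of \eqref{eq:T^0} applied to the sequence $Y_1,\dots,Y_n$, and then to invoke Theorem~\ref{th:main} in its i.i.d.\ form \eqref{th:iid}. For that I need two ingredients: that the $Y_j$ are i.i.d.\ self-adjoint matrices with $\mb EY_1=\Sigma$ (so the estimator is aimed at $\Sigma$), and a bound $\l\|\mb EY_1^2\r\|\le\hat\sigma^2$ on the variance proxy that enters the exponent of Theorem~\ref{th:main}; the latter is where the work lies.

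First I would write $\xi_i:=Z_i-\mu$, so that $\xi_1,\dots,\xi_{2n}$ are i.i.d., centered, with covariance $\Sigma$ and $\mb E\|\xi_1\|_2^4<\infty$, and $Z_{2j-1}-Z_{2j}=\xi_{2j-1}-\xi_{2j}$. Since the pairs $(Z_{2j-1},Z_{2j})$, $j=1,\dots,n$, use disjoint blocks of observations, $Y_1,\dots,Y_n$ are i.i.d.\ (each a symmetric positive semidefinite rank-one matrix, in particular self-adjoint). By independence and $\mb E\xi_i=0$, $\mb E\l[(\xi_1-\xi_2)(\xi_1-\xi_2)^T\r]=2\Sigma$, hence $\mb EY_1=\Sigma$, so $\widehat\Sigma_{2n}(\theta)=\frac1{n\theta}\sum_{j=1}^n\psi(\theta Y_j)$ is exactly $\wh T^{(0)}_\theta$ for the i.i.d.\ sample $Y_1,\dots,Y_n$ with mean $\Sigma$.

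Next I would compute $\mb EY_1^2$. With $W:=\xi_1-\xi_2$ and $Y_1=\tfrac12 WW^T$ one has $Y_1^2=\tfrac14\|W\|_2^2\,WW^T$, so $\mb EY_1^2=\tfrac14\,\mb E\l[\|W\|_2^2\,WW^T\r]$. Expanding $\|W\|_2^2=\|\xi_1\|_2^2-2\dotp{\xi_1}{\xi_2}+\|\xi_2\|_2^2$ and $WW^T=\xi_1\xi_1^T-\xi_1\xi_2^T-\xi_2\xi_1^T+\xi_2\xi_2^T$ and taking expectations term by term, every product containing an odd power of $\xi_1$ or of $\xi_2$ vanishes by independence and centering, and — using $((Z-\mu)(Z-\mu)^T)^2=\|Z-\mu\|_2^2(Z-\mu)(Z-\mu)^T$ for the $\|\xi_i\|_2^2\,\xi_i\xi_i^T$ contributions, the identity $\sum_k\Sigma_{ik}\Sigma_{kj}=(\Sigma^2)_{ij}$ for the $\dotp{\xi_1}{\xi_2}$ cross terms, and $\mb E\|\xi_i\|_2^2=\tr(\Sigma)$ for the remaining ones — the surviving terms assemble to
\[
\mb E\l[\|W\|_2^2\,WW^T\r]=2\l(\mb E\l((Z-\mu)(Z-\mu)^T\r)^2+\tr(\Sigma)\Sigma+2\Sigma^2\r).
\]
Hence $\l\|\mb EY_1^2\r\|=\tfrac12\l\|\mb E\l((Z-\mu)(Z-\mu)^T\r)^2+\tr(\Sigma)\Sigma+2\Sigma^2\r\|\le\hat\sigma^2$, all expectations being finite thanks to $\mb E\|Z-\mu\|_2^4<\infty$. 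It then remains to apply \eqref{th:iid} to $Y_1,\dots,Y_n$ with variance parameter $\hat\sigma^2$, choosing the free confidence parameter so that the right-hand side equals $2d\,e^{-t}$ (i.e.\ taking $s=2t$ there), which produces $\l\|\widehat\Sigma_{2n}-\Sigma\r\|\le\sqrt2\,\hat\sigma\sqrt{t/n}$ with probability at least $1-2d\,e^{-t}$. The main obstacle is the moment expansion above: twelve products must be tracked, and although half vanish by symmetry, correctly assembling the remaining ones into the three pieces of $\hat\sigma^2$ is the only part that needs care.
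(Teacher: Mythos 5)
Your argument is correct and follows the paper's route exactly: verify that $Y_1,\dots,Y_n$ are i.i.d.\ self-adjoint with $\mb EY_j=\Sigma$, check $\l\|\mb EY_1^2\r\|\le\hat\sigma^2$ (the twelve-term expansion you spell out is precisely what the paper dismisses as ``easy to check''), and apply Theorem~\ref{th:main} through its i.i.d.\ form \eqref{th:iid} with $s=2t$. One small observation: taking $s=2t$ in \eqref{th:iid} corresponds to $\theta=\sqrt{2t/n}\,/\hat\sigma$, whereas the $\hat\theta=\sqrt{t/n}\,/\hat\sigma$ displayed in the text omits the factor $\sqrt2$ --- with the latter the same computation only yields the larger constant $3/2$ in place of $\sqrt2$ at confidence level $1-2de^{-t}$, so your (implicit) choice of $\theta$ is the one that actually produces the stated bound.
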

\noindent Before presenting the proof, let us make several additional remarks.
\begin{remark}
\begin{asparaenum}
\item 
It is not hard to show that (see Corollary \ref{corollary:FKG-bound}) that  
\[
\l\| \mb E\l(  (Z-\mu)(Z-\mu)^T \r)^2 \r\| \geq \tr(\Sigma) \l\| \Sigma \r\|, 
\]
hence it is enough to choose
$\hat\sigma^2\geq \| \Sigma \|^2 + \sigma_0^2 = \| \Sigma \|^2 + \l\| \mb E\l(  (Z-\mu)(Z-\mu)^T \r)^2 \r\|$. 
In view of remark \ref{remark:sigma}, this expression can be further simplified under the bounded kurtosis assumption, 
and one can choose $\hat\sigma^2\geq \|\Sigma\|^2 \l( 1+K \mathrm{r}(\Sigma) \r)$, where $K$ is the uniform bound on the kurtosis of the coordinates of $Z$, and $\mathrm{r}(\Sigma)$ is the effective rank.
\item
Construction of $\widehat \Sigma_{2n}(\theta)$ essentially halves the effective sample size. 
While the loss of a constant factor can be deemed insignificant in non-asymptotic theoretical bounds, it is undesirable in applications. 
A more natural version of the estimator based on a sample of size $2n$ is the U-statistic 
\[
\bar \Sigma_{2n}(\theta) = \frac{1}{{2n\choose 2}}\sum_{1\leq i<j\leq 2n} \frac{1}{\theta}
\psi\l(  \frac{\theta}{2}(Z_i-Z_j)(Z_i-Z_j)^T \r).
\] 
Another possibility to avoid ``halving'' the sample size is to center the data using a robust estimator of location, such as the spatial median or the median-of-means estimator \citep{joly2017estimation,lugosi2017sub,minsker2015geometric}. 
Analysis of the estimators of these types is not covered in the present paper, and requires a slightly different set of technical tools to deal with dependent summands; see \cite{S.-Minsker:2017aa} for results in this direction.
\end{asparaenum}
\end{remark}
\begin{proof}[Proof of Corollary \ref{corollary:cov}]
Note that for all $j=1,\ldots,n$, $\mb E Y_j=\Sigma$. 
Since $Y_1,\ldots,Y_n$ are i.i.d. random matrices, Theorem \ref{th:main} applies (see remark \ref{remark:iid}), giving that
\[
\Pr\l( \l\| \hat\Sigma(\hat\theta) -\Sigma  \r\| \geq \hat\sigma\sqrt{\frac{2t}{n}} \r)\leq 2d e^{-t},
\]
where $\hat\sigma^2\geq \l\| \mb E Y_1^2 \r\|$. 
It is easy to check that 
\[
\l\| \mb E Y_1^2 \r\|=\frac{1}{2} \l\|  \mb E\l(  (Z-\mu)(Z-\mu)^T \r)^2 + \tr(\Sigma)\Sigma +2\Sigma^2 \r\|,
\]
and result follows. 
\end{proof}

\subsection{Estimation of the covariance matrix in Frobenius norm}
\label{section:frobenius}
Next, we present an estimator which achieves strong deviation guarantees in the Frobenius norm. 
Estimation of the covariance matrix with respect to this norm has been previously investigated in the literature, for instance, see \cite{lam2009sparsistency}, \cite{cai2010optimal} and references therein; Frobenius norm is a natural choice when one wants to understand the effect of the rank of an unknown covariance matrix on the estimation error \cite{lounici2014high}. 
Let $\hat S_{2n}$ be the sample covariance estimator based on $Z_1,\ldots,Z_{2n}$:
\[
\hat S_{2n} = \frac{1}{{2n\choose 2}}\sum_{1\leq i<j\leq 2n}\frac{(Z_i-Z_j)(Z_i-Z_j)^T}{2}.
\] 
The following ``soft thresholding'' estimator has been studied in \cite{lounici2014high}; here, $\tau>0$ is a fixed threshold parameter: 
\begin{align}
&
\hat S_{2n}^\tau=\argmin_{A\in \mb R^{d\times d}}\l[  \l\| A - \hat S_{2n} \r\|^2_{\mathrm{F}} +\tau \l\| A \r\|_1\r].
\end{align}
We propose to replace the sample covariance $\hat S_{2n}$ by $\widehat \Sigma_{2n}$, and consider 
\begin{align}
&
\widehat \Sigma_{2n}^\tau=\argmin_{A\in \mb R^{d\times d}}\l[  \l\| A - \widehat \Sigma_{2n} \r\|^2_{\mathrm{F}} + \tau \l\| A \r\|_1\r].
\end{align}
It is not hard to see (e.g., see the proof of Theorem 1 in \cite{lounici2014high}) that $\widehat \Sigma_{2n}^\tau$ can be written explicitly as 
\[
\widehat \Sigma_{2n}^\tau = \sum_{j=1}^d \max\l(\lambda_j\l(\widehat \Sigma_{2n}\r) -\tau/2, 0\r) v_j(\widehat \Sigma_{2n}) v_j(\widehat \Sigma_{2n})^T,
\]
where $\lambda_j(\widehat \Sigma_{2n})$ and $v_j(\widehat \Sigma_{2n})$ are the eigenvalues and corresponding eigenvectors of $\widehat \Sigma_{2n}$. 
The following result holds: 
\begin{theorem}
\label{th:covariance}
For any 
\[
\tau \geq 4\hat \sigma\sqrt{\frac{t+\log(2d)}{2n}}
\]
\begin{align}
&\label{eq:ex70}
\l\| \widehat \Sigma^\tau_{2n} - \Sigma \r\|_{\mathrm{F}}^2\leq \inf_{A\in \mb R^{d\times d}} \l[  \l\| A - \Sigma \r\|_{\mathrm{F}}^2 + \frac{(1+\sqrt{2})^2}{8}\tau^2\mathrm{rank}(A)  \r].
\end{align}
with probability $\geq 1-e^{-t}$.
\end{theorem}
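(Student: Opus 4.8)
The estimator $\widehat\Sigma^\tau_{2n}$ is the minimizer of a nuclear-norm penalized least-squares objective whose ``data matrix'' is $\widehat\Sigma_{2n}$, so the argument naturally splits into a probabilistic part controlling $\widehat\Sigma_{2n}-\Sigma$ in operator norm and a deterministic oracle-inequality part. The plan is to work on the event $\mathcal E:=\l\{\l\|\widehat\Sigma_{2n}-\Sigma\r\|\le \tau/2\r\}$: first establish $\Pr(\mathcal E)\ge 1-e^{-t}$, then show that \eqref{eq:ex70} holds pointwise on $\mathcal E$. Note also that $\widehat\Sigma_{2n}$ is a matrix function of the symmetric rank-one matrices $Y_j$ and hence is symmetric, so $\widehat\Sigma^\tau_{2n}$ is symmetric and the whole analysis takes place among symmetric matrices.

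For the probabilistic step, apply Corollary \ref{corollary:cov} with the confidence parameter $t+\log(2d)$ in place of $t$ (which sets the tuning parameter of $\widehat\Sigma_{2n}$ accordingly): it gives $\l\|\widehat\Sigma_{2n}-\Sigma\r\|\le \sqrt2\,\hat\sigma\sqrt{(t+\log(2d))/n}$ with probability at least $1-2d\,e^{-(t+\log(2d))}=1-e^{-t}$, and the hypothesis $\tau\ge 4\hat\sigma\sqrt{(t+\log(2d))/(2n)}=2\sqrt2\,\hat\sigma\sqrt{(t+\log(2d))/n}$ forces the right-hand side to be at most $\tau/2$. Hence $\Pr(\mathcal E)\ge 1-e^{-t}$, for every $\tau$ above the stated threshold.

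For the deterministic step I follow the standard analysis of nuclear-norm penalized estimators, as in the proof of Theorem 1 of \cite{lounici2014high}. Fix $A\in\mb R^{d\times d}$, let $r=\mathrm{rank}(A)$ and $\Delta:=\widehat\Sigma^\tau_{2n}-A$. The first-order optimality condition reads $2(\widehat\Sigma^\tau_{2n}-\widehat\Sigma_{2n})+\tau V=0_{d\times d}$ for some $V\in\partial\l\|\widehat\Sigma^\tau_{2n}\r\|_1$; combining it with the subgradient inequality $\l\|A\r\|_1\ge \l\|\widehat\Sigma^\tau_{2n}\r\|_1+\dotp{V}{A-\widehat\Sigma^\tau_{2n}}$ and the identity $2\dotp{\Sigma-\widehat\Sigma^\tau_{2n}}{\widehat\Sigma^\tau_{2n}-A}=\l\|\Sigma-A\r\|_{\mathrm F}^2-\l\|\Sigma-\widehat\Sigma^\tau_{2n}\r\|_{\mathrm F}^2-\l\|\Delta\r\|_{\mathrm F}^2$ gives
\[
\l\|\widehat\Sigma^\tau_{2n}-\Sigma\r\|_{\mathrm F}^2\le \l\|A-\Sigma\r\|_{\mathrm F}^2-\l\|\Delta\r\|_{\mathrm F}^2+2\dotp{\widehat\Sigma_{2n}-\Sigma}{\Delta}+\tau\l\|A\r\|_1-\tau\l\|\widehat\Sigma^\tau_{2n}\r\|_1.
\]
Next, decompose $\Delta$ along the tangent space of $A$: with $S_1,S_2$ the column and row spaces of $A$, set $\mathcal P_A^\perp(\Delta):=P_{S_1^\perp}\Delta P_{S_2^\perp}$ and $\mathcal P_A(\Delta):=\Delta-\mathcal P_A^\perp(\Delta)$. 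Decomposability of the nuclear norm, $\l\|A+\mathcal P_A^\perp(\Delta)\r\|_1=\l\|A\r\|_1+\l\|\mathcal P_A^\perp(\Delta)\r\|_1$, bounds $\tau\l\|A\r\|_1-\tau\l\|\widehat\Sigma^\tau_{2n}\r\|_1$ above by $\tau\l\|\mathcal P_A(\Delta)\r\|_1-\tau\l\|\mathcal P_A^\perp(\Delta)\r\|_1$; splitting $\dotp{\widehat\Sigma_{2n}-\Sigma}{\Delta}$ along $\mathcal P_A$ and $\mathcal P_A^\perp$ and using duality of the operator and nuclear norms together with $\l\|\widehat\Sigma_{2n}-\Sigma\r\|\le\tau/2$ on $\mathcal E$, plus $\mathrm{rank}(\mathcal P_A(\Delta))\le 2r$ (hence $\l\|\mathcal P_A(\Delta)\r\|_1\le\sqrt{2r}\,\l\|\mathcal P_A(\Delta)\r\|_{\mathrm F}\le\sqrt{2r}\,\l\|\Delta\r\|_{\mathrm F}$), controls it. The $\mathcal P_A^\perp(\Delta)$ nuclear-norm contributions cancel, leaving an inequality of the form $\l\|\widehat\Sigma^\tau_{2n}-\Sigma\r\|_{\mathrm F}^2\le \l\|A-\Sigma\r\|_{\mathrm F}^2-\l\|\Delta\r\|_{\mathrm F}^2+c\,\tau\sqrt{r}\,\l\|\Delta\r\|_{\mathrm F}$ for an explicit constant $c$. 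Maximizing the right-hand side over $\l\|\Delta\r\|_{\mathrm F}\ge 0$ gives $\l\|\widehat\Sigma^\tau_{2n}-\Sigma\r\|_{\mathrm F}^2\le \l\|A-\Sigma\r\|_{\mathrm F}^2+\tfrac{c^2}{4}\,\tau^2 r$, and taking the infimum over $A$ yields \eqref{eq:ex70}, with $\tfrac{c^2}{4}=\tfrac{(1+\sqrt2)^2}{8}$ coming from the sharpest version of the bounds above.

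The probabilistic reduction is immediate once Corollary \ref{corollary:cov} is in hand; the real work is the deterministic oracle inequality, and inside it the delicate point is extracting the precise constant $\tfrac{(1+\sqrt2)^2}{8}$. That requires care in how the tangent-space part of $\Delta$ is apportioned between the negative term $-\l\|\Delta\r\|_{\mathrm F}^2$ and the linear term $c\,\tau\sqrt r\,\l\|\Delta\r\|_{\mathrm F}$, and in the exact form of the duality estimate for $\dotp{\widehat\Sigma_{2n}-\Sigma}{\mathcal P_A(\Delta)}$; the detailed bookkeeping is precisely that of \cite{lounici2014high}, which is why the statement appeals to it. No ingredients beyond that reference and the operator-norm control of Corollary \ref{corollary:cov} are needed.
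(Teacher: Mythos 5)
Your proposal follows the same route as the paper: it isolates the deterministic event $\mathcal E=\{\tau\ge 2\|\widehat\Sigma_{2n}-\Sigma\|\}$ on which the oracle inequality holds (deferring the precise bookkeeping of the constant $(1+\sqrt2)^2/8$ to the proof of Theorem~1 in \cite{lounici2014high}, as the paper does), and then invokes Corollary~\ref{corollary:cov} with confidence parameter $t+\log(2d)$ to show $\Pr(\mathcal E)\ge 1-e^{-t}$. Your sketch of the decomposability argument is a correct skeleton of that deterministic step, and you rightly flag that the sharp constant requires the finer apportionment carried out in \cite{lounici2014high}; nothing further is needed.
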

Result stated above mimics the (almost) optimal rates obtained in \cite{lounici2014high} (in the situation when no data is missing) under significantly weaker assumptions on the underlying distribution. 
\begin{proof}[Proof of Theorem \ref{th:covariance}]
The proof is based on the following lemma:
\begin{lemma}
Inequality (\ref{eq:ex70}) holds on the event $\m E=\l\{ \tau\geq 2\l\| \widehat \Sigma_{2n} - \Sigma \r\| \r\}$. 
\end{lemma}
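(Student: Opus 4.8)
The plan is to establish the stated oracle inequality on the deterministic event $\m E=\{\tau\geq 2\|\widehat\Sigma_{2n}-\Sigma\|\}$, and then separately control $\Pr(\m E)$ using Corollary \ref{corollary:cov}. The two pieces together give the result: on $\m E$ the bound \eqref{eq:ex70} holds by the lemma, and $\Pr(\m E)\geq 1-e^{-t}$ follows by an appropriate choice of the confidence parameter inside Corollary \ref{corollary:cov}.

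First I would prove the lemma, which is the analytic heart of the argument and the step I expect to be the main obstacle. The approach mirrors the classical analysis of soft-thresholding / nuclear-norm penalized least squares (as in the proof of Theorem 1 in \cite{lounici2014high}, which the text explicitly invites us to adapt). Since $\widehat\Sigma_{2n}^\tau$ minimizes $A\mapsto \|A-\widehat\Sigma_{2n}\|_{\mathrm F}^2+\tau\|A\|_1$, for any competitor $A\in\mb R^{d\times d}$ the first-order optimality (equivalently, comparing objective values) yields
\[
\l\|\widehat\Sigma_{2n}^\tau - \widehat\Sigma_{2n}\r\|_{\mathrm F}^2 + \tau\l\|\widehat\Sigma_{2n}^\tau\r\|_1 \leq \l\|A-\widehat\Sigma_{2n}\r\|_{\mathrm F}^2 + \tau\|A\|_1.
\]
Expanding the squares around $\Sigma$, writing $\Delta=\widehat\Sigma_{2n}-\Sigma$, and rearranging gives
\[
\l\|\widehat\Sigma_{2n}^\tau - \Sigma\r\|_{\mathrm F}^2 \leq \|A-\Sigma\|_{\mathrm F}^2 + 2\dotp{\Delta}{\widehat\Sigma_{2n}^\tau - A} + \tau\|A\|_1 - \tau\l\|\widehat\Sigma_{2n}^\tau\r\|_1.
\]
The key step is to bound the cross term by duality of the nuclear and operator norms, $|\dotp{\Delta}{B}|\leq \|\Delta\|\,\|B\|_1$, so that $2\dotp{\Delta}{\widehat\Sigma_{2n}^\tau-A}\leq 2\|\Delta\|\l(\|\widehat\Sigma_{2n}^\tau\|_1+\|A\|_1\r)\leq \tau\l(\|\widehat\Sigma_{2n}^\tau\|_1+\|A\|_1\r)$ on $\m E$; a more refined argument splitting $\widehat\Sigma_{2n}^\tau-A$ into components on and orthogonal to the range of $A$ (using the decomposability of the nuclear norm and the trace inequalities of Facts \ref{fact:01}--\ref{fact:02}) is what produces the sharp $\mathrm{rank}(A)$ dependence and the constant $(1+\sqrt2)^2/8$. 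Carrying the bookkeeping so that the final constant matches is the delicate part; conceptually it is standard, but I would cite \cite{lounici2014high} for the precise combinatorial manipulation rather than reproduce it.

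Second, I would bound $\Pr(\m E^c)$. By Corollary \ref{corollary:cov}, $\|\widehat\Sigma_{2n}-\Sigma\|\leq \sqrt2\,\hat\sigma\sqrt{s/n}$ with probability at least $1-2d\,e^{-s}$. Choosing $s=t+\log(2d)$ makes the failure probability at most $2d\,e^{-(t+\log(2d))}=e^{-t}$, and the corresponding bound on $\|\widehat\Sigma_{2n}-\Sigma\|$ becomes $\sqrt2\,\hat\sigma\sqrt{(t+\log(2d))/n}=2\hat\sigma\sqrt{(t+\log(2d))/(2n)}$, which is exactly $\tau/2$ for the smallest admissible $\tau$ in the statement (and hence $\leq\tau/2$ for any larger $\tau$). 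Thus $\Pr(\m E)\geq 1-e^{-t}$. Combining this with the lemma, inequality \eqref{eq:ex70} holds with probability at least $1-e^{-t}$, which completes the proof.
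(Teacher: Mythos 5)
Your proposal is correct and takes essentially the same route as the paper: the paper's own proof of this lemma is a one-line reduction to the proof of Theorem 1 in \cite{lounici2014high} with $\hat S_{2n}$ replaced by $\widehat\Sigma_{2n}$, and your objective-comparison step, expansion around $\Sigma$, and operator/nuclear-norm duality bound on the cross term are precisely the opening moves of that argument, with (as you rightly flag) the rank-dependent refinement and the constant $(1+\sqrt2)^2/8$ delegated to the cited reference. Your second paragraph on $\Pr(\m E)$ belongs to the surrounding proof of Theorem \ref{th:covariance} rather than to the lemma itself, but it too matches the paper's reasoning.
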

\noindent To verify this statement, it is enough to repeat the steps of the proof of Theorem 1 in \cite{lounici2014high}, replacing each occurrence of the sample covariance $\hat S_{2n}$ by its robust counterpart $\widehat \Sigma^\tau_{2n}$. \\
Result then follows from corollary \ref{corollary:cov} that $\Pr(\m E)\geq 1-e^{-t}$ whenever $\tau \geq 4\hat\sigma\sqrt{\frac{t+\log(2d)}{2n}}$.
\end{proof}

\subsection{Matrix completion}

Let $A_0\in \mb R^{d_1\times d_2}$ be an unknown matrix, and assume that we observe a random subset of its entries contaminated by noise. 
The goal is to estimate $A_0$ from a small number of such noisy measurements under an additional assumption that $A_0$ is likely to be of low rank (or can be well approximated by a low rank matrix). 
More specifically, let 
\[
\m X=\l\{  e_j(d_1)e_k^T(d_2), \ 1\leq j\leq d_1,  \  1\leq k\leq d_2\r\},
\]
where $e_j(d_1)$ and $e_k(d_2)$ are the elements of the canonical bases of $\mb R^{d_1}$ and $\mb R^{d_2}$ respectively. 
Let $X$ have uniform distribution $\Pi:=\mathrm{Unif}(\m X)$ on $\m X$, and assume that the noisy linear measurement $Y$ has the form 
\[
Y=\tr(X^T A_0)+\xi, 
\]
where $\mb E(\xi|X)=0$. 
Finally, assume that $(X_1,Y_1),\ldots,(X_n,Y_n)$ are i.i.d. copies of $(X,Y)$. 

It is easy to check that $\mb E(YX)=\frac{1}{d_1 d_2}A_0$, hence the natural unbiased estimator of $A_0$ is 
\[
\widehat A=\frac{d_1 d_2}{n}\sum_{j=1}^n Y_j X_j.
\]
To incorporate the structural (low-rank) assumption on $A_0$, the following estimator has been considered in the literature: let $\tau>0$, and define
\begin{align*}
\widehat A^\tau&=\argmin_{A\in \mb R^{d_1\times d_2}}\l[  \frac{1}{d_1 d_2}\|  A - \widehat A \|_{\mathrm{F}}^2 +\tau \|A\|_1\r] \\
&
=\argmin_{A\in \mb R^{d_1\times d_2}}\l[ \frac{1}{d_1 d_2} \|A\|_{\mathrm{F}}^2 - \dotp{\frac{2}{n}\sum_{j=1}^n Y_j X_j}{A} + \tau \|A\|_1  \r].
\end{align*}
Note that one can use the symmetric version $\widehat A_s\in \mb R^{(d_1+d_2)\times (d_1+d_2)}$ of $\widehat A$ instead, defined as
\[
\widehat A_s=\frac{d_1 d_2}{n}\sum_{j=1}^n Y_j \m H(X_j),
\]
so that $\mb E\widehat A_s=\m H(A_0)$, and consider the equivalent convex minimization problem
\begin{align*}
\widehat A^\tau&=\argmin_{A\in \mb R^{d_1\times d_2}}\l[  \frac{1}{d_1 d_2}\|  \m H(A) - \m H(\widehat A_s) \|_{\mathrm{F}}^2 +2 \tau \|A\|_1\r] \\
&
=\argmin_{A\in \mb R^{d_1\times d_2}}\l[ \frac{1}{d_1 d_2} \|\m H(A)\|_{\mathrm{F}}^2 - \dotp{\frac{2}{n}\sum_{j=1}^n Y_j \m H(X_j)}{\m H(A)} +2\tau \|A\|_1  \r].
\end{align*}
However, strong theoretical guarantees for this estimator exist only when the ``noise term'' $\xi_j$ is either bounded with probability 1, or has sub-exponential tails. 
We propose to replace $\widehat A_s$ with a robust estimator 
\begin{align*}
&
\widehat R=\frac{d_1 d_2}{n\theta}\sum_{j=1}^n \psi\l(\theta Y_j \m H(X_j)\r), 
\end{align*}
where $\psi(\cdot)$ satisfies (\ref{eq:psi}) 
and 
\[
\theta:=\theta(t,n,A_0)=\frac{1}{\|A_0\|_{\max}\vee\sqrt{\var(\xi)}}\sqrt{\frac{(t+\log(2(d_1+d_2)))(d_1\wedge d_2)}{n}}.
\] 
The reasoning behind this choice of $\theta$ is explained below. 
Consider 
\begin{align*}
&
\widehat R^\tau=\argmin_{A\in \mb R^{d_1\times d_2}}
\l[ \frac{1}{d_1 d_2} \| \m H(A) \|_{\mathrm{F}}^2 - \dotp{\frac{2}{d_1 d_2}\widehat R}{\m H(A)} +2\tau \|A\|_1  \r].
\end{align*}
Finally, set 
\[
M = \widehat R - \mb E \l(Y\m H(X)\r).
\]
The following result holds:

\begin{theorem}
\label{th:completion}
Assume that $\xi_j$ is independent of $X_j, \ j=1,\ldots,n$, and that $\var(\xi)<\infty$. 
For any 
\[
\tau\geq 4\l(\|A_0\|_{\max}\vee\sqrt{\var(\xi)}\r)\sqrt{\frac{t+\log (2(d_1+d_2))}{n(d_1\wedge d_2)}},
\]
\[
\frac{1}{d_1 d_2}\l\|\widehat R^\tau - A_0 \r\|^2_{\textrm{F}}\leq 
\inf_{A\in \mb R^{d_1\times d_2} } \l[ \frac{1}{d_1 d_2} \l\| A-A_0 \r\|^2_{\mathrm{F}} +\l(\frac{1+\sqrt 2}{2} \r)^2 d_1 d_2 \tau^2 \rank(A) \r].
\]
with probability $\geq 1-e^{-t}$.
\end{theorem}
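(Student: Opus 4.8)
The plan is to follow the standard oracle-inequality argument for trace-norm-penalized least squares (as in Koltchinskii–Lounici–Tsybakov and the specific instantiation in \cite{lounici2014high}), but with the key probabilistic input — a bound on the ``stochastic error'' $\|M\|$ — supplied by our robust estimator $\widehat R$ rather than by concentration of a sample average. First I would record the deterministic oracle inequality: on the event $\m E = \{ 2\|M\| \le \tau \}$, for every $A \in \mb R^{d_1\times d_2}$,
\[
\frac{1}{d_1 d_2}\l\|\widehat R^\tau - A_0 \r\|^2_{\mathrm F} \le \frac{1}{d_1 d_2}\l\|A - A_0\r\|^2_{\mathrm F} + \l(\frac{1+\sqrt 2}{2}\r)^2 d_1 d_2 \tau^2 \rank(A).
\]
This is proved exactly as Theorem 1 in \cite{lounici2014high}: write the first-order optimality condition for the convex objective, use convexity of the squared Frobenius norm and the subgradient characterization of the nuclear norm, the trace duality $|\dotp{M}{B}| \le \|M\|\,\|B\|_1$, and the decomposition of $A$ into the part supported on $\mathrm{range}(A)$ and its complement to absorb the nuclear-norm cross terms; the rank of $A$ enters through $\|P A\|_1 \le \sqrt{\rank(A)}\,\|PA\|_{\mathrm F}$. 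Since $\widehat R$ enters the objective only through the linear term $\dotp{\frac{2}{d_1d_2}\widehat R}{\m H(A)}$ and $M = \widehat R - \mb E(Y\m H(X))$, the argument is insensitive to how $\widehat R$ was constructed, so this step is essentially verbatim.

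Next I would estimate $\Pr(\m E)$ using Corollary \ref{cor:rectangular} applied to the i.i.d. matrices $W_j := d_1 d_2\, Y_j X_j \in \mb R^{d_1 \times d_2}$, whose mean is $A_0$. The content of the step is computing the variance proxy. One has $\mb E(W W^\ast) = (d_1 d_2)^2 \mb E[Y^2 X X^\ast]$; since $X = e_j(d_1) e_k^T(d_2)$ with probability $1/(d_1d_2)$, $XX^\ast = e_j(d_1)e_j(d_1)^T$, and conditioning on $X$ gives $\mb E[Y^2 | X = e_je_k^T] = (A_0)_{jk}^2 + \var(\xi) \le \|A_0\|_{\max}^2 + \var(\xi)$. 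Hence $\mb E(WW^\ast) = (d_1d_2)^2 \cdot \frac{1}{d_1 d_2}\sum_{j,k} \mb E[Y^2|X=e_je_k^T]\, e_j(d_1)e_j(d_1)^T \preceq d_1 d_2 (\|A_0\|_{\max}^2 + \var(\xi)) \cdot d_2 \cdot I_{d_1}$, so $\|\sum_{j=1}^n \mb E(W_j W_j^\ast)\| \le n\, d_1 d_2^2(\|A_0\|_{\max}^2 + \var(\xi))$; symmetrically $\|\sum_j \mb E(W_j^\ast W_j)\| \le n\, d_1^2 d_2(\|A_0\|_{\max}^2 + \var(\xi))$. Thus I may take
\[
\sigma_n^2 = n\, d_1 d_2 (d_1 \vee d_2)\l(\|A_0\|_{\max}^2 + \var(\xi)\r),
\]
and using $\var(\xi) \le (\|A_0\|_{\max} \vee \sqrt{\var(\xi)})^2$ and $\|A_0\|_{\max}^2 \le (\|A_0\|_{\max}\vee\sqrt{\var(\xi)})^2$ bound $\sigma_n^2 \le 2n\, d_1 d_2 (d_1\vee d_2)(\|A_0\|_{\max}\vee\sqrt{\var(\xi)})^2$. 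Plugging $\theta = \frac{\theta(t,n,A_0)}{\text{(appropriate scaling)}}$ — that is, the stated choice of $\theta$, after rescaling by $\frac{1}{n}$ to pass from $\widehat R$ as a sum to $\widehat R$ as defined — into the first bound of Corollary \ref{cor:rectangular} with the confidence level $t + \log(2(d_1+d_2))$ absorbing the dimensional prefactor $2(d_1+d_2)$, I obtain $\Pr(\|M\| \le \tau/2) \ge 1 - e^{-t}$ for $\tau$ at least the stated threshold. Combining with the deterministic inequality and taking the infimum over $A$ finishes the proof.

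The main obstacle I anticipate is bookkeeping rather than conceptual: getting the constants and the scaling of $\theta$ to line up so that the threshold on $\tau$ comes out exactly as $4(\|A_0\|_{\max}\vee\sqrt{\var(\xi)})\sqrt{(t+\log(2(d_1+d_2)))/(n(d_1\wedge d_2))}$. The subtle point is that $\widehat R = \frac{d_1 d_2}{n\theta}\sum_j \psi(\theta Y_j \m H(X_j))$ is written in the ``averaged'' normalization, so Corollary \ref{cor:rectangular} (stated for sums $\sum_j \frac{1}{\theta}X_j$) must be applied to $W_j = d_1 d_2 Y_j X_j$ with the deviation level $t\sqrt{n}$ translated into a bound on $\frac1n\|\sum_j(\cdots)\| = \|M\|$; one must check that the factor $d_1 \wedge d_2$ in the denominator of $\theta$ is exactly what is needed to make $\theta^2 \sigma_n^2 / 2$ of the same order as $\theta t \sqrt n$ at the optimal choice, given $\sigma_n^2 \propto n d_1 d_2 (d_1\vee d_2)$ and the target accuracy $\tau \propto \sqrt{1/(n(d_1\wedge d_2))}$ — indeed $d_1 d_2 (d_1 \vee d_2) \cdot \frac{1}{d_1\wedge d_2} = (d_1 \vee d_2)^2$, and the $d_1 d_2$ prefactor in $\widehat R$ together with the $\frac{1}{d_1 d_2}$ in front of the Frobenius norm in the objective conspires to make this consistent. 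A secondary point worth stating carefully is that $\widehat R^\tau$ is genuinely the minimizer of a strongly convex objective so the first-order condition is both necessary and sufficient; this is immediate but should be noted so that the adaptation of the \cite{lounici2014high} argument is rigorous. Everything else is a routine transcription of the known nuclear-norm oracle inequality.
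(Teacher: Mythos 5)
Your proposal is correct and follows essentially the same route as the paper: reduce to the deterministic oracle inequality of Koltchinskii--Lounici--Tsybakov (Lemma~\ref{lemma:completion}) on the event $\{\tau\geq 2\|M\|\}$, then control the stochastic error via the matrix-Catoni deviation bound. The only cosmetic difference is that you invoke Corollary~\ref{cor:rectangular} for the rectangular matrices $W_j=d_1 d_2\, Y_j X_j$, whereas the paper applies Theorem~\ref{th:main} directly to the self-adjoint dilations $Y_j\m H(X_j)$ — but since Corollary~\ref{cor:rectangular} is itself proved by applying Theorem~\ref{th:main} to $\m H(\cdot)$ and extracting the off-diagonal block, these are the same calculation in different dress; your variance computation ($\sigma_n^2 \lesssim n\,d_1 d_2(d_1\vee d_2)(\|A_0\|_{\max}^2+\var(\xi))$ after the $d_1 d_2$ rescaling) agrees with the paper's bound $\sigma^2\leq(\var(\xi)\vee\|A_0\|_{\max}^2)\frac{2}{d_1\wedge d_2}$ for the un-rescaled $Y\m H(X)$, and the choice of $\theta$ and the absorption of the prefactor $2(d_1+d_2)$ into $t$ both match. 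One small point worth stating explicitly if you write this up: Corollary~\ref{cor:rectangular} controls only the off-diagonal block of the dilated deviation, not the full operator norm $\|M\|$ that appears in the paper's statement of Lemma~\ref{lemma:completion}; this is still sufficient because $\m H(A)$ has zero diagonal blocks, so the pairing $\dotp{M}{\m H(A)-\m H(\widehat R^\tau)}$ in the oracle-inequality proof sees only $M_{12}$, but the substitution should be noted when adapting the deterministic lemma.
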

\noindent Note that we only assume that $\var(\xi)<\infty,$ while in \cite{fan2016robust}, a similar result is obtained under a slightly stronger assumption requiring that $\mb E | \xi |^{2+\eps}<\infty$ for some $\eps>0$.
\begin{proof}
Define $\mb A\subseteq \mb R^{(d_1+d_2)\times (d_1+d_2)}$ to be the image of $\mb R^{d_1\times d_2}$ under $\m H(\cdot)$:
\[
\mb A=\l\{ B\in \mb R^{(d_1+d_2)\times (d_1+d_2)}: \ B=\m H(A) \text{ for some } A\in \mb R^{d_1\times d_2} \r\}.
\]
We begin with the following inequality:
\begin{lemma}
\label{lemma:completion}
Assume that $\tau\geq 2\|M\|$. Then
\begin{align*}
\frac{1}{d_1 d_2}\l\| \m H(\widehat R^\tau)  - \m H(A_0) \r\|^2_{\textrm{F}}\leq 
\inf_{B\in \mb A} \l[ \frac{1}{d_1 d_2} \l\| B-\m H(A_0) \r\|^2_{\mathrm{F}} +\l(\frac{1+\sqrt 2}{2} \r)^2 d_1 d_2 \tau^2 \rank(B) \r].
\end{align*}
\end{lemma}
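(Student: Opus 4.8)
\textbf{Proof proposal for Lemma \ref{lemma:completion}.}

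The plan is to mimic the by-now standard oracle-inequality argument for nuclear-norm-penalized least squares (as carried out in the proof of Theorem 1 in \cite{lounici2014high} and its precursors), but applied to the surrogate response $\widehat R/(d_1 d_2)$ in place of the empirical average $\frac{1}{n}\sum_j Y_j X_j$, and expressed through the Hermitian dilation so that everything lives on the space of self-adjoint matrices. First I would write down the first-order optimality condition for $\widehat R^\tau$: since $\widehat R^\tau$ minimizes $B\mapsto \frac{1}{d_1 d_2}\|\m H(B)\|_{\mathrm F}^2 - \dotp{\frac{2}{d_1 d_2}\widehat R}{\m H(B)} + 2\tau \|B\|_1$ over the convex set $\mb A$, there is a subgradient $V\in\partial\|\widehat R^\tau\|_1$ with $\frac{2}{d_1 d_2}\m H(\widehat R^\tau) - \frac{2}{d_1 d_2}\widehat R + 2\tau \m H(V) = 0$ on $\mb A$ (up to the usual projection onto $\mb A$). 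Then for any competitor $B\in\mb A$ I would subtract and use convexity of the nuclear norm together with the duality $\dotp{M}{\cdot}\le \|M\|\,\|\cdot\|_1$ to get the basic inequality
\[
\frac{1}{d_1 d_2}\l\|\m H(\widehat R^\tau)-\m H(A_0)\r\|_{\mathrm F}^2 \le \frac{1}{d_1 d_2}\l\|B-\m H(A_0)\r\|_{\mathrm F}^2 + 2\tau\bigl(\|B\|_1 - \|\widehat R^\tau\|_1\bigr) + 2\dotp{M}{\m H(\widehat R^\tau)-B},
\]
after which, on the event $\tau\ge 2\|M\|$, the term $2\dotp{M}{\m H(\widehat R^\tau)-B}$ is absorbed into $\tau\|\widehat R^\tau - B\|_1$.

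Next I would carry out the familiar decomposition of the nuclear norm relative to the support of a fixed best rank-$r$ approximation $B$: split $\widehat R^\tau - B$ into its component in the ``model subspace'' determined by the row/column spaces of $B$ and its orthogonal complement, use $\|B\|_1 - \|\widehat R^\tau\|_1 \le \|P_B(\widehat R^\tau-B)\|_1 - \|P_B^\perp(\widehat R^\tau-B)\|_1$, and combine with the absorbed noise term so that the $P_B^\perp$ part drops out and one is left controlling $\|P_B(\widehat R^\tau - B)\|_1 \le \sqrt{2\rank(B)}\,\|\widehat R^\tau - B\|_{\mathrm F}$. Feeding this back and completing the square (balancing $\tau\sqrt{\rank(B)}\,\|\widehat R^\tau - B\|_{\mathrm F}$ against the quadratic term via $ab\le \tfrac{a^2}{2\varepsilon}+\tfrac{\varepsilon b^2}{2}$ with the optimal $\varepsilon$) produces exactly the constant $\bigl(\tfrac{1+\sqrt2}{2}\bigr)^2$ and the factor $d_1 d_2\,\tau^2$ in front of $\rank(B)$, once one tracks the $\frac{1}{d_1 d_2}$ normalization through the dilation (note $\|\m H(A)\|_{\mathrm F}^2 = 2\|A\|_{\mathrm F}^2$ and $\|\m H(A)\|_1 = 2\|A\|_1$, and these factors of $2$ are what turn the plain oracle constant into the stated one).

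The only genuinely non-routine point — and the reason this is phrased as a lemma separate from Theorem \ref{th:completion} — is the bookkeeping that links the dilated problem over $\mb A$ back to the original problem over $\mb R^{d_1\times d_2}$ and, in particular, checking that restricting the minimization to the subspace $\mb A$ (rather than all self-adjoint matrices) does not spoil the optimality argument: the projection onto $\mb A$ is an orthogonal projection that commutes with $\m H$ and fixes $\m H(\widehat R)$, $\m H(A_0)$, and all dilations, so the subgradient inequality survives intact. Everything else is a verbatim transcription of \cite{lounici2014high}, so I would state it as such, indicate the substitutions ($\hat S_{2n}\rightsquigarrow \widehat R/(d_1 d_2)$, sample-covariance deviation $\rightsquigarrow \|M\|$, ambient space $\rightsquigarrow \mb A$ via $\m H$), and refer the reader there for the remaining details. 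The subsequent step in the proof of Theorem \ref{th:completion} — namely showing $\Pr(\tau\ge 2\|M\|)\ge 1-e^{-t}$ for the stated $\tau$ — is then a direct application of Corollary \ref{cor:rectangular} to the i.i.d.\ matrices $Y_j\m H(X_j)$, using $\|A_0\|_{\max}\vee\sqrt{\var(\xi)}$ to bound the relevant second-moment quantity $\sigma_n^2$.
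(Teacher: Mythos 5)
Your approach is correct and is essentially what the paper does: write the first-order optimality condition, absorb the noise term via trace duality under $\tau\ge 2\|M\|$, peel the nuclear norm relative to the competitor's support, and complete the square, with $\widehat R$ substituted for the sample average. One correction on the bookkeeping: the paper cites Theorem 1 of Koltchinskii--Lounici--Tsybakov \cite{koltchinskii2011nuclear}, not \cite{lounici2014high}, and this matters for the constant --- the objective here has the form $\frac{1}{d_1d_2}\|B\|_{\mathrm F}^2-\dotp{\cdot}{B}+\tau\|B\|_1$ as in \cite{koltchinskii2011nuclear}, whose Theorem 1 yields exactly $\l(\frac{1+\sqrt 2}{2}\r)^2 d_1 d_2\,\tau^2\rank(B)$, whereas Lounici's Theorem 1 (which \emph{is} the right template for Theorem \ref{th:covariance} in Section \ref{section:frobenius}) treats $\|A-\hat S\|_{\mathrm F}^2+\tau\|A\|_1$ and produces $\frac{(1+\sqrt 2)^2}{8}\tau^2\rank$ instead; also, the dilation factors of $2$ from $\|\m H(A)\|_{\mathrm F}^2=2\|A\|_{\mathrm F}^2$ and $\rank\m H(A)=2\rank A$ do not deform the constant inside the lemma --- they only cancel when the lemma is translated back to $\mb R^{d_1\times d_2}$ via \eqref{eq:ex10}--\eqref{eq:ex20}.
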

\begin{proof}
By the definition of $\widehat R^{\tau}$, we see that 
\begin{align*}
&
\m H(\widehat R^\tau)=\argmin_{B\in \mb A}
\l[ \frac{1}{d_1 d_2} \| B \|_{\mathrm{F}}^2 - \dotp{\frac{2}{d_1 d_2}\widehat R}{B} +\tau \|B\|_1  \r].
\end{align*}
If we replace $\frac{1}{d_1 d_2}\widehat R$ by $\frac{1}{d_1 d_2}\widehat A_s=\frac{1}{n}\sum_{j=1}^n Y_j \m H(X_j)$, the result follows from Theorem 1 in \cite{koltchinskii2011nuclear} immediately. 
To obtain the current statement, it is enough to repeat the argument of Theorem 1 in \cite{koltchinskii2011nuclear}, replacing each occurrence of the matrix 
$\frac{1}{d_1 d_2}\widehat A_s$ by $\frac{1}{d_1 d_2}\widehat R$. 
\end{proof}
\noindent 
To complete the proof, we will estimate each side of the inequality of Lemma \ref{lemma:completion}. 
First, it is obvious from the definition of the Frobenius norm that 
\begin{align}
\label{eq:ex10}
\frac{1}{d_1 d_2}\l\| \m H(\widehat R^\tau) - \m H(A_0) \r\|^2_{\textrm{F}} = \frac{2}{d_1 d_2}\l\| \widehat R^\tau - A_0 \r\|^2_{\textrm{F}}.
\end{align}
\noindent Next, since $\rank(\m H(A))=2\rank(A)$,
\begin{align}
\label{eq:ex20}
&\nonumber
\inf_{B\in \mb A} \l[ \frac{1}{d_1 d_2} \l\| B - \m H(A_0) \r\|^2_{\mathrm{F}} +\l(\frac{1+\sqrt 2}{2} \r)^2 d_1 d_2 \tau^2 \rank(B) \r] \\
&
=2 \inf_{A\in \mb R^{d_1\times d_2}} \l[ \frac{1}{d_1 d_2} \l\| A-A_0 \r\|^2_{\mathrm{F}} +\l(\frac{1+\sqrt 2}{2} \r)^2 d_1 d_2 \tau^2 \rank(A) \r].
\end{align}
It remains to estimate the probability of the event $\m E=\l\{ \tau\geq 2\|M\| \r\}$.
Let 
\[
\sigma^2:=\max\l( \l\|  \mb E \l[ Y^2 XX^T\r] \r\|, \l\|  \mb E \l[Y^2 X^T X\r] \r\|  \r).
\] 
\begin{lemma}
Assume that $\xi_j$ is independent of $X_j$, $j=1,\ldots,n$. Then
\[
\sigma^2 \leq \l(\var(\xi)\vee\|A_0\|^2_{\max}\r) \frac{2}{d_1\wedge d_2}.
\]
\end{lemma}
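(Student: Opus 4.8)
The plan is to bound the operator norms of $\mb E\left[Y^2 XX^T\right]$ and $\mb E\left[Y^2 X^T X\right]$ directly, using independence of $\xi$ and $X$ together with the explicit structure $Y = \tr(X^T A_0) + \xi$ and $X$ uniform on the rank-one matrices $e_j(d_1)e_k^T(d_2)$. First I would condition on $X$ and compute $\mb E\left[Y^2 \,\middle|\, X\right] = \tr(X^T A_0)^2 + \var(\xi)$, since the cross term vanishes by $\mb E(\xi|X)=0$ and $\mb E(\xi^2|X)=\var(\xi)$. Because $X = e_j(d_1)e_k^T(d_2)$ with probability $\frac{1}{d_1 d_2}$, we have $\tr(X^T A_0) = (A_0)_{j,k}$, so $\tr(X^T A_0)^2 \leq \|A_0\|_{\max}^2$ pointwise. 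Hence $\mb E\left[Y^2\,\middle|\,X\right] \leq \|A_0\|_{\max}^2 + \var(\xi) \leq 2\left(\var(\xi)\vee\|A_0\|^2_{\max}\right)$ almost surely.

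Next I would use this pointwise bound to pull the scalar factor out: $\mb E\left[Y^2 XX^T\right] = \mb E\left[\mb E(Y^2|X)\, XX^T\right] \preceq 2\left(\var(\xi)\vee\|A_0\|^2_{\max}\right)\mb E\left[XX^T\right]$, using that $XX^T$ is nonnegative definite and Fact~\ref{fact:01}/positivity of the conditional expectation. Now $XX^T = e_j(d_1)e_j(d_1)^T$ when $X = e_j(d_1)e_k^T(d_2)$, and averaging over the uniform distribution gives $\mb E\left[XX^T\right] = \frac{1}{d_1 d_2}\sum_{j=1}^{d_1}\sum_{k=1}^{d_2} e_j(d_1)e_j(d_1)^T = \frac{1}{d_1} I_{d_1}$, so $\left\|\mb E\left[XX^T\right]\right\| = \frac{1}{d_1}$. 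Symmetrically, $X^T X = e_k(d_2)e_k(d_2)^T$ and $\mb E\left[X^T X\right] = \frac{1}{d_2} I_{d_2}$ with norm $\frac{1}{d_2}$. Taking the maximum of the two bounds yields $\sigma^2 \leq 2\left(\var(\xi)\vee\|A_0\|^2_{\max}\right)\left(\frac{1}{d_1}\vee\frac{1}{d_2}\right) = \frac{2\left(\var(\xi)\vee\|A_0\|^2_{\max}\right)}{d_1\wedge d_2}$, which is exactly the claim.

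I do not anticipate a serious obstacle here; the argument is essentially a conditioning computation plus the observation that the uniform measure on coordinate rank-one matrices pushes forward to a scaled identity under $X\mapsto XX^T$ and $X\mapsto X^T X$. The only mild subtlety is being careful that the semidefinite bound $\mb E(Y^2|X)\,XX^T \preceq \left(\sup_X \mb E(Y^2|X)\right)XX^T$ is valid before taking expectations — this holds because for each fixed $X$ the left side is a nonnegative scalar times a fixed PSD matrix, and the inequality is preserved under the (positive, linear) expectation operator; then Fact~\ref{fact:02} converts the Loewner bound into the operator-norm bound. One should also record that $\var(\xi) < \infty$ is exactly what guarantees $\sigma^2 < \infty$, consistent with the theorem's hypotheses.
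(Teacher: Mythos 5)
Your proof is correct and follows essentially the same route as the paper: both decompose $Y^2$ using $\mb E(\xi\mid X)=0$ and independence of $\xi$ from $X$, bound $\l(\tr(X^T A_0)\r)^2$ pointwise by $\|A_0\|_{\max}^2$, and compute $\mb E[XX^T]=\frac{1}{d_1}I_{d_1}$, $\mb E[X^TX]=\frac{1}{d_2}I_{d_2}$ to extract the operator norms. The only cosmetic difference is that you first form $\mb E[Y^2\mid X]$ and pull out a uniform scalar bound before taking the outer expectation, while the paper expands $\mb E[Y^2 XX^T]$ directly into the two terms $\mb E[\xi^2 XX^T]+\mb E[(\tr(X^T A_0))^2 XX^T]$ and bounds each — same content, slightly different bookkeeping.
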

\begin{proof}
Note that $\mb E \l[Y^2 XX^T\r] = \mb E \l[\xi^2 XX^T\r]  + \mb E \l[ \l(\tr(X^T A_0)\r)^2 X X^T \r]$. 
Moreover, $|\tr(X^T A_0)|\leq \max_{i,j}\l|  (A_0)_{i,j} \r|=\|A_0\|_{\max}$, and $\l\| \mb EX X^T = \frac{1}{d_1}\r\|$, hence
\[
\l\|  \mb E \l[Y^2 XX^T\r]  \r\|\leq \var(\xi)\frac{1}{d_1} + \|A_0\|^2_{\max}\frac{1}{d_1}.
\]
Similarly, 
\[
\l\|  \mb E \l[Y^2 X^T X\r]  \r\|\leq \var(\xi)\frac{1}{d_2} + \|A_0\|^2_{\max}\frac{1}{d_2}.
\]
\end{proof}
\noindent Applying Theorem \ref{th:main} (see remark \ref{remark:iid}) with 
\begin{align*}
\theta&=
\sqrt{\frac{2(t + \log(2(d_1+d_2)))}{n}}\frac{1}{\l(  \l(\var(\xi)\vee\|A_0\|^2_{\max}\r) \frac{2}{d_1\wedge d_2}  \r)^{1/2}}\\
&
= \frac{1}{\|A_0\|_{\max}\vee\sqrt{\var(\xi)}}\sqrt{\frac{(t+\log (2(d_1+d_2)))(d_1\wedge d_2)}{n}},
\end{align*} 
we see that 
\[
\|M\|\leq 2\l(\|A_0\|_{\max}\vee\sqrt{\var(\xi)}\r)\sqrt{\frac{t+\log (2(d_1+d_2))}{n(d_1\wedge d_2)}}
\]
with probability $\geq 1-e^{-t}$. 
Final result now follows from the combination of this inequality with (\ref{eq:ex10}), (\ref{eq:ex20}) and Lemma \ref{lemma:completion}.

\end{proof}

\section{Optimal choice of $\theta$ and adaptation to the unknown second moment}
\label{section:variance}

To make results of Theorem \ref{th:main} useful, one has to set the value for the parameter $\theta$ which in turn depends on the (usually unknown) norm $\sigma_n^2=\l\|\sum_{j=1}^n \mb EY_j^2\r\|$. 
To address this problem, we develop a simple adaptive solution based on Lepski's method.

Lepski's method \cite{lepskii1992asymptotically} is a powerful general technique that allows to adapt to the unknown structure of the problem - for example, bandwidth selection in nonparametric estimation, or unknown second moment in our case. 
Let $Y_1,\ldots,Y_n\in \mb C^{d\times d}$ be independent self-adjoint random matrices with 
$\sigma_n^2=\l\| \sum_{j=1}^n\mb EY_j^2 \r\|$, and assume that $\sigma_{\mn}, \ \sigma_{\mx}$ are such that 
\[
\sigma_{\mn}\leq \frac{\sigma_n}{\sqrt n} \leq \sigma_{\mx}.
\]
Parameters $\sigma_{\mn}$ and $\sigma_{\mx}$ are ``crude'' preliminary bounds that can differ from $\sigma_n/\sqrt{n}$ by several orders of magnitude. 
Let $\sigma_j = \sigma_{\mn} 2^j$ and
\[
\m J=\l\{ j\in \mb Z: \  \sigma_{\mn} \leq \sigma_j  < 2\sigma_{\mx} \r\}
\]
be a set of cardinality $|\m J|\leq 1+\log_2(\sigma_{\mx}/\sigma_{\mn})$, 
and for each $j\in \m J$ set $\theta_j=\theta(j,t)=\sqrt{\frac{2t}{n}}\frac{1}{\sigma_j}$. 
Define
\[
T_{n,j}=\frac{1}{n\theta_j}\sum_{i=1}^n \psi(\theta_j Y_i),
\]
where $\psi(\cdot)$ satisfies (\ref{eq:psi}). 
Finally, set 
\begin{align}
\label{eq:lepski}
j_\ast:=\min\l\{ j\in \m J: \forall k>j \text{ s.t. } k\in \m J,\ \l\|  T_{n,k} - T_{n,j} \r\|\leq 2 \sigma_{k} \sqrt{\frac{2t}{n}}  \r\}
\end{align}
and $T_n^\ast:=T_{n,j_\ast}$. 

Next result shows that adaptation is possible at the cost of an additional multiplicative constant factor $6$ in the deviation bound. 
\begin{theorem}
\label{th:lepski}
The following inequality holds for any $t>0$:
\[
\Pr\l( \l\| T_n^\ast - \mb EY \r\| \geq 6(\sigma_n/\sqrt{n}) \sqrt{\frac{2t}{n}} \r)\leq  2d \log_2\l(\frac{2\sigma_{\mx}}{\sigma_{\mn}}\r) e^{-t}.
\]
\end{theorem}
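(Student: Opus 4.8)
The plan is to run the standard Lepski argument on the family $\{T_{n,j}\}_{j\in\m J}$, treating each $T_{n,j}$ as the preliminary estimator $\wh T^{(0)}_{\theta_j}$ and using Theorem \ref{th:main} (in the equivalent form of remark \ref{remark:iid}) to control all the relevant deviations simultaneously. First I would identify the ``oracle'' index $j_o:=\min\{j\in\m J:\ \sigma_j\geq \sigma_n/\sqrt{n}\}$; by construction $\sigma_n/\sqrt n\leq\sigma_{j_o}<2\sigma_n/\sqrt n$, so this index is the best available dyadic proxy for the true scale. The key observation is that for $\theta_j=\sqrt{2t/n}\,\frac{1}{\sigma_j}$ with $\sigma_j\geq\sigma_n/\sqrt n$, the ``variance parameter'' condition of Theorem \ref{th:main} is met with $\sigma:=\sigma_j$ (since $\sigma_j^2\geq\sigma_n^2/n=\|\mb EY^2\|$), so that
\[
\Pr\!\left(\l\|T_{n,j}-\mb EY\r\|\geq \sigma_j\sqrt{\tfrac{2t}{n}}\right)\leq 2d\,e^{-t}
\]
for every $j\in\m J$ with $j\geq j_o$. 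Taking a union bound over the (at most $\log_2(2\sigma_{\mx}/\sigma_{\mn})$) such indices, I get an event $\m A$ of probability at least $1-2d\log_2(2\sigma_{\mx}/\sigma_{\mn})e^{-t}$ on which $\l\|T_{n,k}-\mb EY\r\|\leq\sigma_k\sqrt{2t/n}$ holds simultaneously for all $k\in\m J$, $k\geq j_o$.

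Next I would argue on $\m A$ that $j_\ast\leq j_o$. Indeed, for any $j,k\in\m J$ with $k>j\geq j_o$ we have, by the triangle inequality, $\l\|T_{n,k}-T_{n,j}\r\|\leq\l\|T_{n,k}-\mb EY\r\|+\l\|T_{n,j}-\mb EY\r\|\leq \sigma_k\sqrt{2t/n}+\sigma_j\sqrt{2t/n}\leq 2\sigma_k\sqrt{2t/n}$ (using $\sigma_j\leq\sigma_k$), so the index $j_o$ itself satisfies the defining inequality in \eqref{eq:lepski}, whence $j_\ast\leq j_o$. Then I split into the two cases dictated by the definition of $j_\ast$: if $j_\ast=j_o$, then $\l\|T_n^\ast-\mb EY\r\|=\l\|T_{n,j_o}-\mb EY\r\|\leq\sigma_{j_o}\sqrt{2t/n}<2(\sigma_n/\sqrt n)\sqrt{2t/n}$, which is far better than the claimed bound. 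If $j_\ast<j_o$, then since $j_o>j_\ast$ and $j_o\in\m J$, the stopping rule \eqref{eq:lepski} guarantees $\l\|T_{n,j_o}-T_{n,j_\ast}\r\|\leq 2\sigma_{j_o}\sqrt{2t/n}$; combining with $\l\|T_{n,j_o}-\mb EY\r\|\leq\sigma_{j_o}\sqrt{2t/n}$ on $\m A$ gives
\[
\l\|T_n^\ast-\mb EY\r\|\leq \l\|T_{n,j_\ast}-T_{n,j_o}\r\|+\l\|T_{n,j_o}-\mb EY\r\|\leq 3\sigma_{j_o}\sqrt{\tfrac{2t}{n}}<6(\sigma_n/\sqrt n)\sqrt{\tfrac{2t}{n}},
\]
using $\sigma_{j_o}<2\sigma_n/\sqrt n$. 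This is exactly the asserted inequality, and the probability bound is the one from the union bound above.

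The only genuinely delicate point is that the union bound in the first step must be taken only over indices $k\geq j_o$ — for the smaller indices $\theta_k$ is \emph{too large} and Theorem \ref{th:main} gives no control, which is precisely why the stopping rule \eqref{eq:lepski} only tests pairs with $k>j$ and why the argument is arranged to never need a deviation bound for $T_{n,j}$ with $j<j_o$. Everything else is bookkeeping: checking that $j_o$ is well-defined and lies in $\m J$ (guaranteed by $\sigma_{\mn}\leq\sigma_n/\sqrt n\leq\sigma_{\mx}$ and the definition of $\m J$), that $|\m J|\leq 1+\log_2(\sigma_{\mx}/\sigma_{\mn})$ so the union-bound factor is as claimed, and tracking the numerical constants ($1+1+1=3$ scales of $\sigma_{j_o}$, then $3\cdot 2=6$). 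I would also remark that replacing Theorem \ref{th:main} by Theorem \ref{th:intdim} yields the analogous adaptive statement with $d$ replaced by the effective dimension $\bar d$ at the cost of an extra harmless polynomial factor in $t$.
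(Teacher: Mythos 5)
Your proposal is correct and follows essentially the same Lepski-type argument as the paper: the oracle index $j_o$ is exactly the paper's $\bar j$, the union-bound event $\m A$ is the paper's $\m B$, and the final triangle-inequality step with the stopping rule yields the same factor $3\sigma_{\bar j} < 6\sigma_n/\sqrt n$. The only cosmetic difference is that you split into the cases $j_\ast = j_o$ and $j_\ast < j_o$, whereas the paper handles both at once by writing $\|T_n^\ast - T_{n,\bar j}\| \leq 2\sigma_{\bar j}\sqrt{2t/n}$ directly.
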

\begin{proof}
Let $\bar j=\min\l\{  j\in \m J: \ \sigma_j \geq \frac{\sigma_n}{\sqrt n}\r\}$ (hence $\sigma_{\bar j}\leq 2\frac{\sigma_n}{\sqrt n}$). 
First, we will show that $j_\ast \leq \bar j$ with high probability. 
Indeed, 
\begin{align*}
\Pr\l( j_\ast > \r. & \l. \bar j\r)\leq \Pr\l( \bigcup_{k\in \m J: k>\bar j} \l\{  \l\|  T_{n,k} - T_{n,\bar j} \r\| > 2\sigma_k \sqrt{\frac{2t}{n}} \r\} \r)\\
& 
\leq \Pr\l(  \l\|  T_{n,\bar j} - \mb EY \r\| > \sigma_{\bar j} \sqrt{\frac{2t}{n}} \r) + 
\sum_{k\in \m J: \ k>\bar j}\Pr\l( \l\|  T_{n,k} - \mb EY \r\| > \sigma_k \sqrt{\frac{2t}{n}}  \r)   \\
&
\leq 2de^{-t} + 2d \log_2\l(\frac{\sigma_{\mx}}{\sigma_{\mn}}\r) e^{-t},
\end{align*}
where we used Theorem \ref{th:main} to bound each of the probabilities in the sum. 
The display above implies that the event 
\[
\m B = \bigcap_{k\in \m J: k\geq \bar j} 
\l\{ \l\|  T_{n,k} - \mb EY \r\|\leq \sigma_k\sqrt{\frac{2t}{n}}  \r\} 
\]
of probability $\geq 1-2d \log_2\l(\frac{2\sigma_{\mx}}{\sigma_{\mn}}\r) e^{-t}$ is contained in $\m E=\l\{  j_\ast\leq \bar j \r\}$. 
Hence, on $\m B$ we have that 
\begin{align*}
\l\| T_n^\ast - \mb EY \r\|&
\leq \| T_n^\ast - T_{n,\bar j} \| + \| T_{n,\bar j} - \mb EY \| \leq 
2 \sigma_{\bar j}\sqrt{\frac{2t}{n}} + \sigma_{\bar j}\sqrt{\frac{2t}{n}} \\
&\leq 4\frac{\sigma_n}{\sqrt n} \sqrt{\frac{2t}{n}} + 2\frac{\sigma_n}{\sqrt n}\sqrt{\frac{2t}{n}} = 6\frac{\sigma_n}{\sqrt n} \sqrt{\frac{2t}{n}},
\end{align*}
and result follows. 
\end{proof}
\begin{remark}
It follows from the proof that constant factor 6 in Theorem \ref{th:lepski} can be reduced to $3+\eps$ for any $\eps>0$ by considering the ``finer grid'', that is, replacing $\m J$ by 
$\l\{ j\in \mb Z: \sigma_{\mn} \leq \kappa^j\sigma_{\mn} < \kappa\sigma_{\mx}\r\}$ for some $1<\kappa<2$, at the cost of replacing 
$\log_2\l(\frac{2\sigma_{\mx}}{\sigma_{\mn}}\r)$ by $\log_2\l(\frac{\kappa\sigma_{\mx}}{\sigma_{\mn}}\r)/\log_2 \kappa$. 
\end{remark}

\section{From bounds depending on $\|\mb EY^2\|$ to bounds depending on $\|\mb E(Y-\mb EY)^2\|$}
\label{section:shift}

Assume that $Y_1,\ldots,Y_n$ are i.i.d. copies of $Y\in \mb C^{d\times d}$. 
In this section, we build upon previously established bounds to provide performance guarantees for the estimator defined via \eqref{eq:matrix-catoni}, \eqref{eq:matrix-catoni2}. 
To this end, we study a version of the steepest descent scheme for the problem \eqref{eq:matrix-catoni} initialized at the point $\wh T^{(0)}_\theta$, namely, $\hat T_0:= \wh T^{(0)}_{\theta_0}$ and
\begin{align*}
& \hat T_k = \hat T_{k-1} + \frac{1}{n\theta_k}\sum_{j=1}^n \psi \l( \theta_k (Y_j - \hat T_{k-1}) \r), \ k\geq 1
\end{align*}
for an appropriate choice of $\theta_k, \ k\geq 0$. 
Note that for any non-random self-adjoint matrix $S$ and $\theta_S=\sqrt{\frac{s}{n}}\frac{1}{\|\mb E(Y-S)^2\|^{1/2}}$, Theorem \ref{th:main} implies that 
\[
\Pr\l( \l\| T_n(S) - \mb EY \r\| \geq \|\mb E(Y-S)^2\|^{1/2} \sqrt{\frac{s}{n}} \r)
\leq 2d \exp\l( -s/2 \r),
\]
where $T_n(S)=S+\frac{1}{n\theta_S}\sum_{j=1}^n \psi\l( \theta_S (Y_j-S)  \r)$. 
Hence, if we use random $S$ which is ``not too far'' from $\mb EY$ with high probability, we expect that the deviation guarantees will still hold with the ``variance parameter'' close to $\l\| \mb E(Y - \mb EY)^2 \r\|$.

Everywhere in this section, we will assume that one has access to some known (possibly very crude) bounds for $\sigma^2=\| \mb E Y^2 \|$ and 
$\sigma_0^2=\| \mb E(Y-\mb EY)^2 \|$:
\begin{assumption}
\label{assump:variance}
Let $\sigma_{\mn},\sigma_{0,\mn}$ and $\sigma_{\mx},\sigma_{0,\mx}$ be known constants such that 
\[
\sigma_{\mn}\leq \sigma \leq \sigma_{\mx} \text{ and } \sigma_{0,\mn}\leq \sigma_0 \leq \sigma_{0,\mx}.
\] 
\end{assumption}

\subsection{Two-step estimation based on sample splitting}

We will first discuss the simplest (but not the most efficient) approach based on splitting the sample $Y_1,\ldots, Y_n$ into two disjoint subsets $G_1$ and $G_2$ of cardinality $\geq \lfloor n/2\rfloor$ each, and performing one step of the steepest descent.  
The main advantage of this approach is the fact that it requires very mild assumptions. 
The idea is to apply Lepski's method (as discussed in section \ref{section:variance}) twice: on the first step, we obtain an estimator $\hat T_0$ based on subsample $G_1$, and on the second step we apply Lepski's method again to the subsample 
$\l\{ Y_j - \hat T_0: \ 1\leq j\leq n,\ Y_j\in G_2\r\}$.  

\noindent Here is the more detailed description: set $\sigma_j = 2^j\sigma_{\mn}$,
\[
\m J_1=\l\{ j\in \mb Z: \  \sigma_{\mn} \leq \sigma_j  < 2\sigma_{\mx} \r\}
\] 
and $\sigma_{0,j} = 2^j\sigma_{0,\mn}$,
\[
\m J_2=\l\{ j\in \mb Z: \  \sigma_{0,\mn} \leq  \sigma_{0,j}  < 2\l( \sigma_{0,\mx} + 12\sigma_{\mx}\sqrt{\frac{t}{n}}\r) \r\},
\] 
and let $\hat T_0$ be the ``Lepski-type'' adaptive estimator based on the subsample $G_1$ defined as
\[
\hat T_0=T_{|G_1|,j_1^\ast}(0;G_1),
\] 
where 
\[
T_{|G_1|,j}(S; G_1)=\frac{1}{|G_1|\theta_j}\sum_{i=1}^{|G_1|} \psi(\theta_j (Y_i - S)),
\]
$\theta_j=\sqrt{\frac{2t}{n/2}}\frac{1}{\sigma_j}$, $\psi(\cdot)$ satisfies (\ref{eq:psi}) and 
\begin{align*}
j_1^\ast:=\min \Bigg\{ j\in \m J_1: & \forall k\in \m J_1 \text{ s.t. } k>j, \\
&
 \l\|  T_{|G_1|,k}(0;G_1) - T_{|G_1|,j}(0;G_1) \r\|\leq 2\sigma_k \sqrt{\frac{2t}{|G_1|}}  \Bigg\}
\end{align*}
$\hat T_1$ is then defined as follows: 
\[
\hat T_1=\hat T_0 + T_{|G_2|,j_2^\ast}(\hat T_0;G_2),
\]
where
\[
T_{|G_2|,j}(S;G_2)=\frac{1}{|G_2|\theta_{0,j}}\sum_{i=|G_1|+1}^{n} \psi(\theta_{0,j} (Y_i - S)), \quad 
\theta_{0,j}=\sqrt{\frac{2t}{n/2}}\frac{1}{\sigma_{0,j}}
\]
and
\begin{align*}
j_2^\ast:=\min \Bigg\{ j\in \m J_2: & \forall k\in \m J_2  \text{ s.t. } k>j, \\ 
& 
\l\|  T_{|G_2|,k}(\hat T_0;G_2) - T_{|G_2|,j}(\hat T_0;G_2) \r\|\leq 2\sigma_{0,k} \sqrt{\frac{2t}{|G_2|}}  \Bigg\}.
\end{align*}
\begin{theorem}
\label{th:adaptive}
With probability at least
\[
1- 2d\l(2+ \log_2\l( \frac{\sigma_{\mx}}{\sigma_{\mn}}\r) + \log_2\l(\frac{\sigma_{0,\mx}+12\sigma_{\mx}\sqrt{t/n}}{\sigma_{0,\mn}}\r)  \r)e^{-t},
\]
the following inequality holds:
\begin{align*}
\l\| \hat T_1 -\mb EY \r\| \leq 12\l(\sigma_0 + 12\sigma\sqrt{\frac t n} \r)\sqrt{\frac{t}{n}}.
\end{align*}
\end{theorem}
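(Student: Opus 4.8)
The plan is to run Lepski's method (Theorem~\ref{th:lepski}) twice, once on each half of the sample, exploiting that $\hat T_0$ is a function of $G_1$ alone and is therefore independent of the observations in $G_2$. For the first stage, note that $\{Y_j:\,Y_j\in G_1\}$ are i.i.d.\ with $\l\|\sum_{Y_j\in G_1}\mb EY_j^2\r\|=|G_1|\,\sigma^2$, and Assumption~\ref{assump:variance} supplies the bracketing constants $\sigma_{\mn}\le\sigma\le\sigma_{\mx}$ required by Theorem~\ref{th:lepski}. Applying that theorem to the subsample $G_1$ (of size $|G_1|\ge\lfloor n/2\rfloor$) produces an event $\m B_1$ with $\Pr(\m B_1)\ge 1-2d\bigl(1+\log_2(\sigma_{\mx}/\sigma_{\mn})\bigr)e^{-t}$ on which $\l\|\hat T_0-\mb EY\r\|\le 6\sigma\sqrt{2t/|G_1|}\le 12\sigma\sqrt{t/n}=:r$.

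The algebraic heart of the argument is the effect of re-centering on the second moment. Conditionally on any realization of $G_1$, the matrix $\hat T_0$ is fixed and $\{Y_i-\hat T_0:\,Y_i\in G_2\}$ are i.i.d.\ with element-wise mean $\mb EY-\hat T_0$; expanding the square and using $\mb E(Y-\mb EY)=0$ gives $\mb E(Y-\hat T_0)^2=\mb E(Y-\mb EY)^2+(\hat T_0-\mb EY)^2$, so on $\m B_1$
\[
\sigma_{0,\mn}\le\sigma_0\le\l\|\mb E(Y-\hat T_0)^2\r\|^{1/2}\le\sigma_0+r\le\sigma_{0,\mx}+12\sigma_{\mx}\sqrt{t/n}.
\]
This is precisely why $\m J_2$ is defined with right endpoint $2\bigl(\sigma_{0,\mx}+12\sigma_{\mx}\sqrt{t/n}\bigr)$: on $\m B_1$ the true scale of the centered sample is bracketed by the endpoints of $\m J_2$, which is the hypothesis needed to apply Theorem~\ref{th:lepski} to the conditional problem with $\hat T_0$ frozen.

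Next I would invoke Theorem~\ref{th:lepski} a second time, conditionally on $G_1\in\m B_1$, for the i.i.d.\ sample $\{Y_i-\hat T_0:\,Y_i\in G_2\}$ (element-wise mean $\mb EY-\hat T_0$, scale at most $\sigma_0+12\sigma\sqrt{t/n}$); this yields an event $\m B_2$ of conditional probability at least $1-2d\bigl(1+\log_2\bigl(\tfrac{\sigma_{0,\mx}+12\sigma_{\mx}\sqrt{t/n}}{\sigma_{0,\mn}}\bigr)\bigr)e^{-t}$ on which
\[
\l\|T_{|G_2|,j_2^\ast}(\hat T_0;G_2)-(\mb EY-\hat T_0)\r\|\le 6\bigl(\sigma_0+12\sigma\sqrt{t/n}\bigr)\sqrt{2t/|G_2|}\le 12\bigl(\sigma_0+12\sigma\sqrt{t/n}\bigr)\sqrt{t/n}.
\]
Since $\hat T_1=\hat T_0+T_{|G_2|,j_2^\ast}(\hat T_0;G_2)$, the left-hand side equals $\l\|\hat T_1-\mb EY\r\|$, which is the asserted bound. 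As the conditional lower bound for $\Pr(\m B_2\mid G_1)$ holds uniformly over $G_1\in\m B_1$, integrating it and intersecting with $\m B_1$ yields $\Pr(\m B_1\cap\m B_2)\ge 1-2d\bigl(2+\log_2(\sigma_{\mx}/\sigma_{\mn})+\log_2\bigl(\tfrac{\sigma_{0,\mx}+12\sigma_{\mx}\sqrt{t/n}}{\sigma_{0,\mn}}\bigr)\bigr)e^{-t}$, using $|\m J_1|\le1+\log_2(\sigma_{\mx}/\sigma_{\mn})$ and $|\m J_2|\le1+\log_2\bigl(\tfrac{\sigma_{0,\mx}+12\sigma_{\mx}\sqrt{t/n}}{\sigma_{0,\mn}}\bigr)$.

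The step I expect to be the main obstacle is making the two-stage conditioning fully rigorous: one must check that on $\m B_1$ the data-dependent grid $\m J_2$ genuinely brackets the random scale $\l\|\mb E(Y-\hat T_0)^2\r\|^{1/2}$, so that Theorem~\ref{th:lepski} is legitimately applied with $\hat T_0$ treated as a deterministic matrix, and then use the independence of $G_1$ and $G_2$ to promote the conditional guarantee to an unconditional one. Everything else---keeping track of the factor $2$ coming from $\sqrt{2t/|G_i|}\le2\sqrt{t/n}$ (up to the harmless $\lfloor\cdot\rfloor$ rounding), and assembling the grid cardinalities into the logarithmic prefactor---is routine bookkeeping.
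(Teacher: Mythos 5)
Your proposal is correct and follows essentially the same two-stage argument as the paper's own proof: apply Theorem~\ref{th:lepski} on $G_1$ to get the preliminary bound $r=12\sigma\sqrt{t/n}$, use the exact identity $\mb E[(Y-\hat T_0)^2\mid\hat T_0]=\mb E(Y-\mb EY)^2+(\hat T_0-\mb EY)^2$ to show that on $\m B_1$ the centered scale is bracketed by the endpoints defining $\m J_2$, then apply Theorem~\ref{th:lepski} again conditionally on $G_1$ and integrate. The paper formalizes the conditioning via the tilted measure $\widetilde\Pr(\cdot)=\Pr(\cdot\mid\m E_1)$ together with independence of $G_2$ from $\sigma(G_1)$, which is precisely the mechanism you sketch, so the two arguments coincide up to presentation.
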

\begin{proof}
See appendix \ref{section:proof-adaptive}.
\end{proof}
The main feature of this result is the variance term $\sigma_0 + 12\sigma\sqrt{\frac t n}$ that can be much smaller compared to $\sigma$ as long as $t\ll n$.

\subsection{Results for the estimator $\wh T_\theta^\ast$ defined via equation \eqref{eq:matrix-catoni2}}
\label{section:chaining}

We will next show how to design an estimator with deviations controlled by ``correct'' variance term without sample splitting (however, subject to the condition that the sample size is sufficiently large). 
In what follows, we will make an additional assumption about the function $\psi$:
\begin{assumption}
\label{ass:2}
Function $\psi(\cdot)$ satisfies (\ref{eq:psi}) and is operator Lipschitz, meaning that 
$\l\| \psi(A) - \psi(B) \r\|\leq L \|A-B\|$ for all self-adjoint $A,B\in \mb C^{d\times d}$, with Lipschitz constant $L$ \textit{independent} of the dimension $d$. 
\end{assumption}
\noindent 
For example, we may take $\psi=\psi_1$ or $\psi=\psi_2$ (see Lemma \ref{lemma:lipschitz} for details). 
As before, let $t>0$ be fixed, set $\sigma_{0,j} = 2^j\sigma_{0,\mn}$,
\[
\m J=\l\{ j\in \mb Z: \  \sigma_{0,\mn} \leq \sigma_{0,j} < 2\sigma_{0,\mx} \r\},
\]
\[
\theta =  \sqrt{\frac{2t}{n}}\frac{1}{\sigma_{\mx}} \text{ and }\theta_j=\sqrt{\frac{2t}{n}}\frac{1}{\sigma_{0,j}} \text{ for } j\in \m J.
\]
For all $j\in \m J$, define $\delta^{(0)}_j = \sigma_{\mx}\sqrt{\frac{2t}{n}}$ and 
\begin{align}
\label{eq:delta_k}
\delta^{(k)}_j = \frac{12}{5}\sigma_{0,j}\sqrt{\frac{2t}{n}} + 6^{-k}\l( \sigma_{\mx}\sqrt{\frac{2t}{n}} -  \frac{12}{5}\sigma_{0,j}\sqrt{\frac{2t}{n}} \r)
\end{align}
for $k\geq 1$.
Next, for each $j\in \m J$, we define 
\begin{align}
\label{eq:prelim}
T^{(0)}_{n,j} := T_n^{(0)} = \frac{1}{n\theta}\sum_{i=1}^n \psi\l( \theta Y_i \r),
\end{align}
(independent of $j$)\footnote{Particular choice of $T_n^{(0)}$ does not matter as long as $\| T_{n}^{(0)} - \mb EY\|$ is small with high probability.}, and 
\begin{align*}
T^{(k)}_{n,j} :=
T^{(k-1)}_{n,j} + \frac{1}{n\theta_j}\sum_{i=1}^n \psi\l( \theta_j \l( Y_i - T^{(k-1)}_{n,j} \r) \r)
\end{align*}
for $k\geq 1$. 
Finally, we apply Lepski's method to the collection of estimators $\l\{ T^{(k)}_{n,j}: j \in \m J \r\}$. 
To this end, define $\hat T_k := T^{(k)}_{n,j_k^\ast}$, where 
\[
j_k^\ast = \min\l\{ j\in \m J: \forall l\in \m J \text{ s.t. } l>j, \ \l\|  T^{(k)}_{n,l} - T^{(k)}_{n,j} \r\| \leq 2 \delta_l^{(k)}\r\}.
\]
Note that the estimator $\hat T_k$ is completely data-dependent. 
We are ready to state the main result of this section:

\begin{theorem}
\label{th:iterative}
Let
\[
\tau = 1.1K \sqrt{\frac{d^2+L t}{n}} + \sqrt{\frac{2t}{n}}\frac{1}{2\sigma_0}, 
\] 
where $K>0$ is an absolute constant, and assume that $\tau\leq 1/6$. 
Moreover, assume that 
\begin{align}
\label{eq:e_k}
\l(\frac{24}{5}\sigma_{0,\mx}\vee \sigma_{\mx}\r)\sqrt{\frac{2t}{n}}\leq 1.
\end{align}
Then for all $k\geq 0$ simultaneously, 
\[
\l\| \hat T_k - \mb EY \r\| \leq 3\l[(1-6^{-k})\frac{24}{5}\sigma_{0}\sqrt{\frac{2t}{n}} + 6^{-k}\sigma_{\mx}\sqrt{\frac{2t}{n}}\r]
\] 
with probability 
$\geq 1 - 8d\l( 1 + 2\log_2\l( \frac{12\sigma_{\mx}}{5\sigma_{0,\mn}}\r)\r)\log_2\l( \frac{2\sigma_{0,\mx}}{\sigma_{0,\mn}} \r) e^{-t}$.
\end{theorem}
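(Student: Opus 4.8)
The plan is to reduce everything to a single ``one–step'' inequality for the steepest–descent update, amplified to hold uniformly over the center $S$ in a ball around $\mb EY$, and then to run a double induction: over the iteration index $k$ along the ``correct'' scale $\bar j$, and over the Lepski selection at each level $k$. Throughout, write $g_j(S):=S+\frac1{\theta_j}\mb E\,\psi(\theta_j(Y-S))$ for the population analogue of one steepest–descent step with step size $\theta_j$, and $G_j(S):=\frac1{n\theta_j}\sum_{i=1}^n\big(\psi(\theta_j(Y_i-S))-\mb E\,\psi(\theta_j(Y_i-S))\big)$ for the centered empirical process, so that $T_n(S;\theta_j)-\mb EY=\big(G_j(S)-G_j(\mb EY)\big)+G_j(\mb EY)+\big(g_j(S)-\mb EY\big)$.

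\emph{Deterministic part.} For non-random self-adjoint $S$ one has $\mb E(Y-S)^2=\mb E(Y-\mb EY)^2+(\mb EY-S)^2$, a sum of two nonnegative definite matrices, and the semidefinite sandwich behind \eqref{eq:psi} together with $\log(I+A)\preceq A$ and $\mb E(Y-\mb EY)=0$ — precisely the computation in the proof of Lemma \ref{lemma:main} — gives the two-sided bound $\|g_j(S)-\mb EY\|\le\frac{\theta_j}{2}\big(\|\mb E(Y-\mb EY)^2\|+\|S-\mb EY\|^2\big)$. With $\theta_j=\sqrt{2t/n}\,\sigma_{0,j}^{-1}$ this produces a ``bias floor'' of order $\sigma_0^2\sigma_{0,j}^{-1}\sqrt{t/n}\le\sigma_0\sqrt{t/n}$ plus a term quadratic, hence contractive, in $\|S-\mb EY\|$ on the relevant range; condition \eqref{eq:e_k} is what keeps $\|S-\mb EY\|$ (and thus the arguments of $\psi$) in the regime where the estimate is effective.

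\emph{Stochastic part (the hard step).} The term $G_j(\mb EY)$ is a sum of $n$ i.i.d.\ centered matrices and is controlled at the single non-random point $\mb EY$ by Theorem \ref{th:main} with $\theta_j$: its norm is $\lesssim\sigma_{0,j}\sqrt{t/n}$ off an event of probability $2de^{-t}$. For the increment $G_j(S)-G_j(\mb EY)$ I invoke Assumption \ref{assumption:2}: each summand's variation is $\le L\theta_j\|S-\mb EY\|$ with $L$ independent of $d$, so Theorem \ref{th:main} applied to these increments shows $\|G_j(S)-G_j(\mb EY)\|\lesssim L\,n^{-1/2}\sqrt{s}\,\|S-\mb EY\|$ with probability $\ge1-2de^{-s/2}$ for each fixed pair. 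One then covers the ball $\{S=S^\ast:\ \|S-\mb EY\|\le\max_k\delta_j^{(k)}\}$ (of radius $\asymp(\sigma_{\mx}\vee\sigma_{0,\mx})\sqrt{t/n}$, whose $\eps$-entropy is $\asymp d^2\log(\cdot/\eps)$) and performs a peeling over dyadic shells, so that the $\asymp d^2$ entropy is ``paid once'' and is absorbed into the contraction parameter: on an event of probability $\ge1-Cd\log_2(\sigma_{\mx}/\sigma_{0,\mn})e^{-t}$ one obtains $\sup_S\|G_j(S)-G_j(\mb EY)\|\le\tau'\|S-\mb EY\|$ with $\tau'\lesssim\sqrt{(d^2+Lt)/n}$. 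This is exactly where the structural hypothesis $\tau\le1/6$ (equivalently $n\gtrsim d^2$) and the fact that the final probability carries only a factor $d$, not $e^{cd^2}$, come from; it is also the only genuinely new ingredient compared with the earlier sections.

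\emph{Induction and Lepski.} Combining the two parts, on the intersection of the events above one gets, for every $j\in\m J$ with $j\ge\bar j:=\min\{j\in\m J:\sigma_{0,j}\ge\sigma_0\}$ (so $\sigma_{0,\bar j}\le2\sigma_0$) and every $S$ with $\|S-\mb EY\|\le\max_k\delta_j^{(k)}$,
\[
\| T_n(S;\theta_j)-\mb EY \|\le 2\sigma_{0,j}\sqrt{\tfrac{2t}{n}}+\tfrac16\|S-\mb EY\|.
\]
Since $\|T_n^{(0)}-\mb EY\|\le\sigma_{\mx}\sqrt{2t/n}=\delta_j^{(0)}$ by Theorem \ref{th:main} with $\theta=\sqrt{2t/n}/\sigma_{\mx}$ (admissible because $\sigma_{\mx}^2\ge\|\mb EY^2\|$ by Assumption \ref{assump:variance}), induction on $k$ — the single uniform event of the stochastic step suffices for all $k$ at once since every $\delta_j^{(k)}$ lies below the chosen ball radius — gives $\|T^{(k)}_{n,j}-\mb EY\|\le\delta_j^{(k)}$ for all $k\ge0$ and all $j\ge\bar j$ simultaneously, because $2\sigma_{0,j}\sqrt{2t/n}+\tfrac16\delta^{(k-1)}_j=\delta_j^{(k)}$ by \eqref{eq:delta_k}. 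On this event the Lepski step at level $k$ is identical to that in Theorem \ref{th:lepski}: $\delta_j^{(k)}$ is non-decreasing in $j$, so for $l>\bar j$ one has $\|T^{(k)}_{n,l}-T^{(k)}_{n,\bar j}\|\le\delta_l^{(k)}+\delta_{\bar j}^{(k)}\le2\delta_l^{(k)}$, hence $\bar j$ passes the selection rule and $j_k^\ast\le\bar j$, whence $\|\hat T_k-T^{(k)}_{n,\bar j}\|\le2\delta^{(k)}_{\bar j}$ and $\|\hat T_k-\mb EY\|\le3\delta^{(k)}_{\bar j}\le3\big[(1-6^{-k})\tfrac{24}{5}\sigma_0\sqrt{2t/n}+6^{-k}\sigma_{\mx}\sqrt{2t/n}\big]$ using $\sigma_{0,\bar j}\le2\sigma_0$. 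A union bound over the base-case event and over the $|\m J|\le1+\log_2(2\sigma_{0,\mx}/\sigma_{0,\mn})$ uniform-fluctuation events — each carrying the $\log_2\!\big(12\sigma_{\mx}/(5\sigma_{0,\mn})\big)$-type factor from the covering and peeling — yields the stated probability. The main obstacle is the uniform stochastic step: controlling a matrix-valued empirical process over a $d^2$-dimensional ball with a dimension-free Lipschitz constant and, crucially, packaging the $\asymp d^2$ metric entropy into $\tau$ rather than into the failure probability; the remainder is bookkeeping of constants so that the recursion closes to the exact $\delta_j^{(k)}$ of \eqref{eq:delta_k}.
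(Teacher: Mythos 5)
Your proposal follows essentially the same route as the paper's proof: decompose the one-step error into a deterministic bias term (the paper's Lemma~\ref{lemma:norm}), a pointwise fluctuation at $\mb EY$ (the paper's Lemma~\ref{lemma:expect}), and a uniform increment process over a ball around $\mb EY$ (the paper's Lemma~\ref{lemma:L_n}); then close a recursion that produces exactly the sequence $\delta_j^{(k)}$ of \eqref{eq:delta_k}, and finish with the same Lepski selection argument over $j\ge\bar j$. The key observation you flag --- that the operator-Lipschitz property of $\psi$ (Assumption~\ref{assumption:2}) lets the $d^2$ metric entropy be absorbed into the contraction factor $\tau$ rather than into the failure probability --- is exactly the paper's main new ingredient. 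Two technical points where your sketch diverges from the paper in implementation, without constituting a genuine gap: (i) for the fixed-pair increment bound you cite Theorem~\ref{th:main}, but that theorem is for the $\psi$-truncated estimator of a mean; the paper instead uses the matrix Hoeffding inequality (Lemma~\ref{lemma:hoeffding}), which is the natural tool once you observe the summands are deterministically bounded by $L\theta_j\|S_1-S_2\|$ --- either yields a sub-Gaussian increment, but Hoeffding is the cleaner choice; (ii) you describe the uniform control as ``cover the ball and peel over dyadic shells,'' whereas the paper runs a genuine generic-chaining argument (Lemma~\ref{lemma:generic-chaining} with Dudley's bound and the operator-norm covering Lemma~\ref{lemma:cover}) at each fixed scale, and separately unions over a finite geometric grid of scales $\gamma_l$ inside the event $\Omega_j$. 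A single-scale $\eps$-net plus a union bound would force $t\gtrsim d^2\log(\cdot)$ and so lose a logarithmic factor in $\tau$; the hierarchical chaining structure is what removes the logarithm and gives the clean $K\delta\sqrt{(d^2+Lt)/n}$ bound stated in Lemma~\ref{lemma:L_n}. With these adjustments your argument is the paper's.
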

\begin{proof}
See section 5 in appendix \ref{proof:iterative}.
\end{proof}
The next corollary easily follows from the preceding result.  
Let $\m A$ be the event of probability 
\[
\Pr(\m A)\geq 1 - 8d\l( 1 + 2\log_2\l( \frac{12\sigma_{\mx}}{5\sigma_{0,\mn}}\r)\r)\log_2\l( \frac{2\sigma_{0,\mx}}{\sigma_{0,\mn}} \r) e^{-t}
\] 
defined in Theorem \ref{th:iterative}. 
Since by the properties of the steepest descent scheme $T_{n,j}^{(k)}$ converges to the solution (denoted $\wh T_{\theta_j}^\ast$) of the problem \eqref{eq:matrix-catoni}, we can easily deduce the following inequality. 
\begin{corollary}
Let $\l\{ \wh T_{\theta_j}^\ast \r\}_{j\in \m J}$ satisfy the equations 
\[
\frac{1}{n\theta_j}\sum_{i=1}^n \psi\l(\theta_j (Y_i - \wh T_{\theta_j}^\ast)\r) = 0_{d\times d}, \quad j\in \m J.
\]
Then on event $\m A$, 
$\l\|  \wh T_{\theta_j}^\ast - \mb EY \r\| \leq \lim_{k\to \infty}\delta_j^{(k)}=\frac{12}{5}\sigma_{0,j}\sqrt{\frac{2t}{n}}$. 
\end{corollary}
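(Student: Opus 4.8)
The plan is to read the conclusion off from the proof of Theorem \ref{th:iterative} together with the convergence of the iterates $T^{(k)}_{n,j}$, and then pass to the limit $k\to\infty$. Fix $j\in\m J$. The recursion
\[
T^{(k)}_{n,j} = T^{(k-1)}_{n,j} + \frac{1}{n\theta_j}\sum_{i=1}^n \psi\left( \theta_j\bigl(Y_i - T^{(k-1)}_{n,j}\bigr)\right)
\]
is, in view of Lemma \ref{lemma:gradient}, a step of steepest descent for the convex, coercive functional $S\mapsto \tr\sum_{i=1}^n \Psi\bigl(\theta_j(Y_i-S)\bigr)$ from \eqref{eq:matrix-catoni} (convexity is Fact \ref{fact:06}); under Assumption \ref{assumption:2} and the smallness conditions of Theorem \ref{th:iterative} the step size is compatible with the smoothness of this objective, so the scheme converges and its limit is a stationary point, i.e.\ a solution $\wh T_{\theta_j}^\ast$ of \eqref{eq:matrix-catoni2}. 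This is precisely the ``properties of the steepest descent scheme'' invoked just before the corollary, so I would quote it rather than reprove it.

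Next I would extract from the proof of Theorem \ref{th:iterative} the per-index estimate that it establishes on the event $\m A$, namely $\l\| T^{(k)}_{n,j} - \mb EY \r\| \le \delta^{(k)}_j$ for every $j\in\m J$ and every $k\ge 0$ (the subsequent Lepski aggregation over $j$ is used only to define $j_k^\ast$ and $\hat T_k$, and plays no role here). Letting $k\to\infty$ and using continuity of the operator norm together with $T^{(k)}_{n,j}\to \wh T_{\theta_j}^\ast$ from the previous step, while observing that $\delta^{(k)}_j \to \frac{12}{5}\sigma_{0,j}\sqrt{\frac{2t}{n}}$ by the explicit formula \eqref{eq:delta_k} (the $6^{-k}$ term vanishes), gives $\l\| \wh T_{\theta_j}^\ast - \mb EY \r\| \le \frac{12}{5}\sigma_{0,j}\sqrt{\frac{2t}{n}}$ on $\m A$, as claimed.

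The only genuine obstacle is the first step — that the steepest descent recursion converges to the solution of \eqref{eq:matrix-catoni2}. Given convexity and coercivity of the objective this is standard once the step size is known to be small enough relative to the Lipschitz constant of the gradient, which is exactly what the operator-Lipschitz hypothesis on $\psi$ and the condition $\tau\le 1/6$ secure; moreover this same convergence is already used inside Theorem \ref{th:iterative} to set up the recursion for $\delta^{(k)}_j$, so the corollary needs no analytic input beyond taking a limit.
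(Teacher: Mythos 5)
Your proof follows the paper's own route: assert convergence of the steepest-descent iterates $T_{n,j}^{(k)}\to \wh T_{\theta_j}^\ast$, read off from the proof of Theorem~\ref{th:iterative} the per-index bound $\l\| T_{n,j}^{(k)} - \mb EY \r\| \le \delta_j^{(k)}$ on $\m A$, and pass to the limit using the explicit formula \eqref{eq:delta_k}; this is exactly what the sentence preceding the corollary asserts, and you are also correct that the Lepski aggregation over $j$ is irrelevant here. The one imprecision is your attribution of the step-size compatibility to ``the smallness conditions of Theorem~\ref{th:iterative}'': the conditions $\tau\le 1/6$ and \eqref{eq:e_k} only control the radius of the ball containing the iterates, whereas what makes the fixed step $\frac{1}{n\theta_j^2}$ no larger than the reciprocal of the gradient's Lipschitz constant $n\theta_j^2 L_{\mathrm{scalar}}$ is the \emph{scalar} Lipschitz constant of $\psi$ being at most $1$ (true for $\psi_1$; borderline for $\psi_2$) — a property of the chosen $\psi$, not of $\tau$ or $n$; the paper's one-line sketch glosses over this exactly as you do, so it is a shared omission rather than a defect of your write-up.
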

One can further apply Lepski's method (see section \ref{section:variance}) to the collection $\l\{ \wh T_{\theta_j}^\ast \r\}_{j\in \m J}$ to obtain a completely data-dependent estimator 
$\wh T^\ast$ that satisfies 
\[
\l\| \wh T^\ast - \mb EY \r\| \leq \frac{72}{5}\sigma_{0}\sqrt{\frac{2t}{n}} 
\]
with high probability (in particular, on event $\m A$).

\section{Numerical simulation results}
\label{section:simulation}

Numerical simulation was performed for covariance estimation problem. 
Data was simulated as follows: let $U=\l(U^{(1)},\ldots,U^{(100)}\r)^T\in \mb R^{100}$ be a vector with i.i.d. coordinates such that 
$U^{(j)}\overset{\mathrm{d}}{=}\frac{1}{\sqrt{2 c(q)}}\l( \xi_{j,1}-\xi_{j,2} \r)$, where $\xi_{j,1}$ and $\xi_{j,2}, \ j=1,\ldots,100$, are independent random variables with probability density function 
\[
p_\xi(t;q)=\frac{q}{(1+t)^{1+q}}I\{t\geq 0\}
\] 
(which belongs to the Pareto family), $c(q)=\var(\xi)=\frac{q}{(q-1)^2 (q-2)}$ and $q=4.01$; in particular, $\var(U^{(j)})=1$. 
Finally, let $Z=\sqrt{\Sigma}U$, where $\Sigma$ is a diagonal matrix with $\Sigma_{11}=10, \ \Sigma_{22}=5, \ \Sigma_{33}=1$, 
and $\Sigma_{jj}=\frac{1}{97}, \ j\geq 4$. 
In particular, $\mb EZ=0$ and $\mb EZ Z^T=\Sigma$. 

The goal of numerical experiment was to evaluate the quality of estimation of the covariance matrix $\Sigma$ as well as its first eigenvector $e_1$ corresponding to $\lambda_1=10$. 
We tested two scenarios with sample sizes equal $n$ to $100$ and $1000$. 
In both cases, we generated $Z_1,\ldots,Z_n$, i.i.d. copies of $Z$, and centered the data via the spatial (or geometric) median defined as
\[
\widehat M_n=\argmin_{y\in \mb R^{100}}\sum_{j=1}^{100} \l\| y - Z_j\r\|_2.
\]
We compared two estimators, $\widehat S_n$ and $\widehat \Sigma_n$ constructed as follows: set $Z_j^0:=Z_j - \widehat M_n$ for brevity, and 
\[
\widehat S_n=\frac{1}{n}\sum_{j=1}^n Z_j^0 {Z_j^0}^T,
\]
which is the analogue of sample covariance with ``robust centering''. 

Next, $\widehat \Sigma_n$ was constructed using a version of Lepski's method described in section \ref{section:variance}. 
We provide details for completeness: set 
\[
\sigma_{\mx}:=2\sqrt{\l\| \frac{1}{n}\sum_{j=1}^n \|Z_j^0\|_2^2 Z_j^0 {Z_j^0}^T \r\|}, \ 
\sigma_{\mn}=\frac{\sigma_{\mx}}{100},
\] 
\[
\m J=\l\{ j\in \mb Z: \  \sigma_{\mn} < 1.3^j  \leq \sigma_{\mx} \r\},
\] 
and let $\psi(\cdot)$ be the function defined in (\ref{eq:psi2}). 
Let $t=\log 10$, and for $j\in \m J$, set $\theta_j=\sqrt{\frac{2t}{n}}\frac{1}{1.3^j}$ and 
$
\hat\Sigma_{n,j} = \frac{1}{n\theta_j}\sum_{i=1}^n \psi\l(\theta_j Z_i^0 {Z_i^0}^T\r).
$
Finally, define 
\[
j_\ast:=\min\l\{j\in \m J: \ \forall k>j, \ \| \hat\Sigma_{n,k} - \hat\Sigma_{n,j} \|\leq 1.3^{k}\sqrt{\frac{t}{n}}  \r\}
\]
(note that we modified some constants compared to the ``theoretical'' version), and finally set 
$\widehat \Sigma_n:=\hat\Sigma_{n,j_\ast}$. 

Quality of covariance estimation was evaluated via comparing $\frac{\|\widehat S_n - \Sigma\|}{\|\Sigma\|}$ with 
$\frac{\|\widehat \Sigma_n - \Sigma\|}{\| \Sigma \|}$ over 500 runs of simulations. 
We also compared errors of estimation of projectors onto the first principal component, 
\[
\l\| u_1(\widehat S_n)u_1(\widehat S_n)^T - u_1(\Sigma)u_1(\Sigma)^T \r\| 
\text{  and  } \l\| u_1(\widehat \Sigma_n)u_1(\widehat \Sigma_n)^T - u_1(\Sigma)u_1(\Sigma)^T \r\|,
\]
where $u_1(\cdot)$ denotes the eigenvector corresponding to the largest eigenvalue of a matrix. 
Histograms illustrating performance of both estimators are presented in figures \ref{cov100-100} and \ref{pca100-100} (for the sample size $n=100$), and in figures \ref{cov100-1000} and \ref{pca100-1000} (for the sample size $n=1000$). 
It is clear from the graphs that in all scenarios, $\widehat \Sigma_n$ performs significantly better than $\widehat S_n$. 

\begin{figure}[t]
  \centering
  \subfloat[Covariance matrix estimation error]{
    \includegraphics[width=0.55\textwidth]{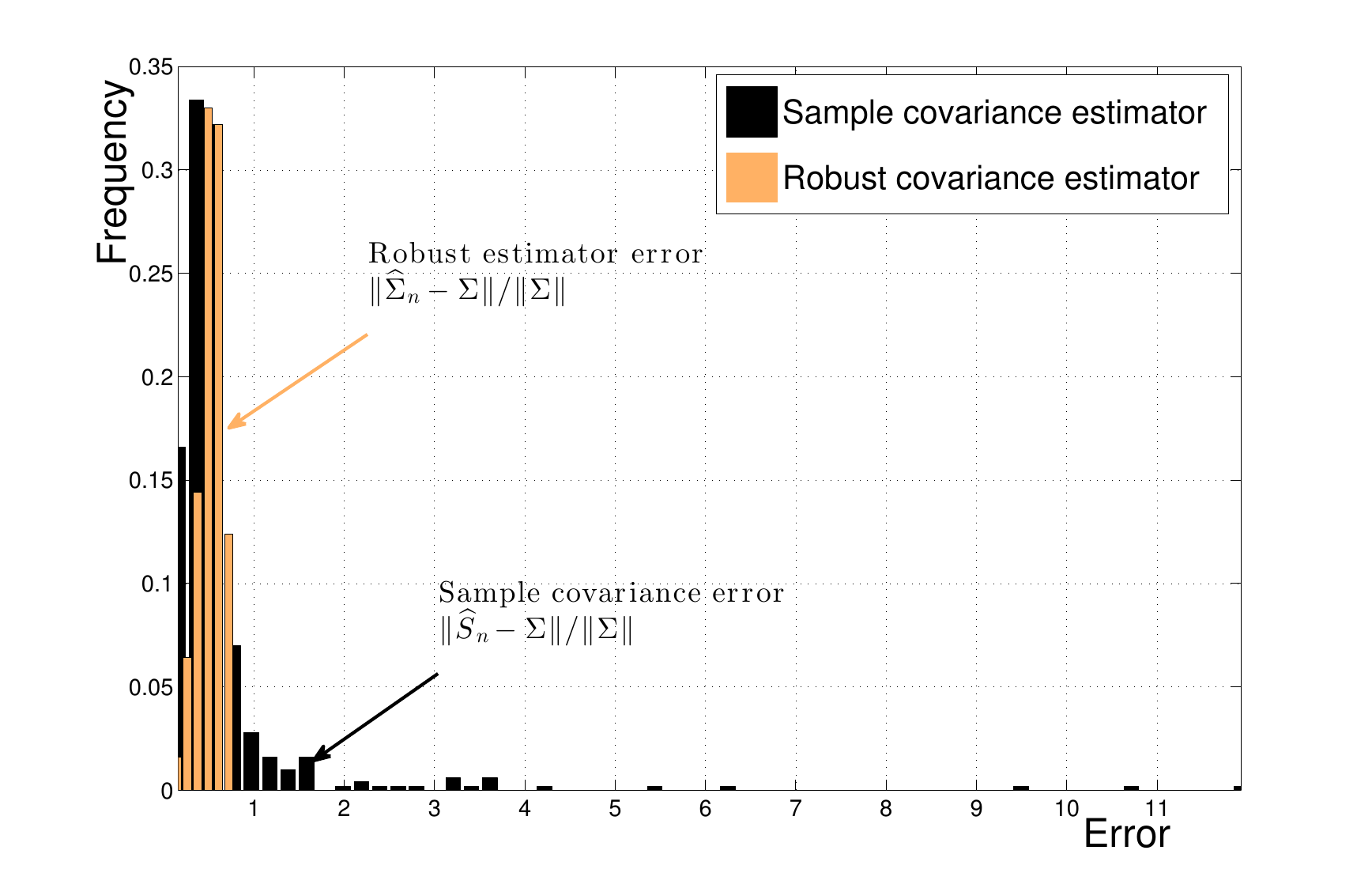}
    \label{cov100-100}}
  \subfloat[First principal component estimation error]{
    \includegraphics[width=0.55\textwidth]{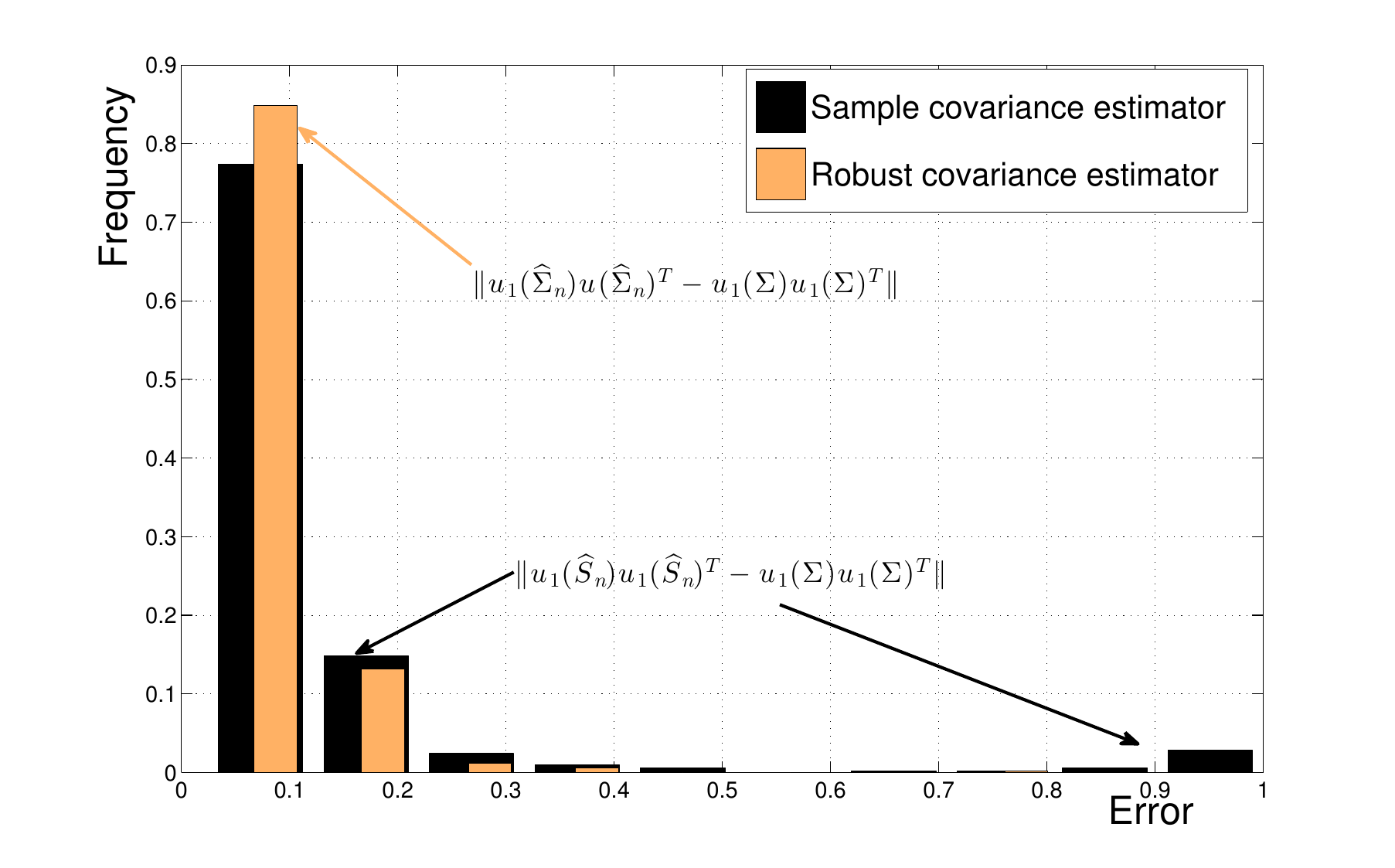}
    \label{pca100-100}}
  \caption{Sample size $n = 100$, dimension $d = 100$.}
\end{figure}

\begin{figure}[t]
  \centering
  \subfloat[Covariance matrix estimation error]{
    \includegraphics[width=0.55\textwidth]{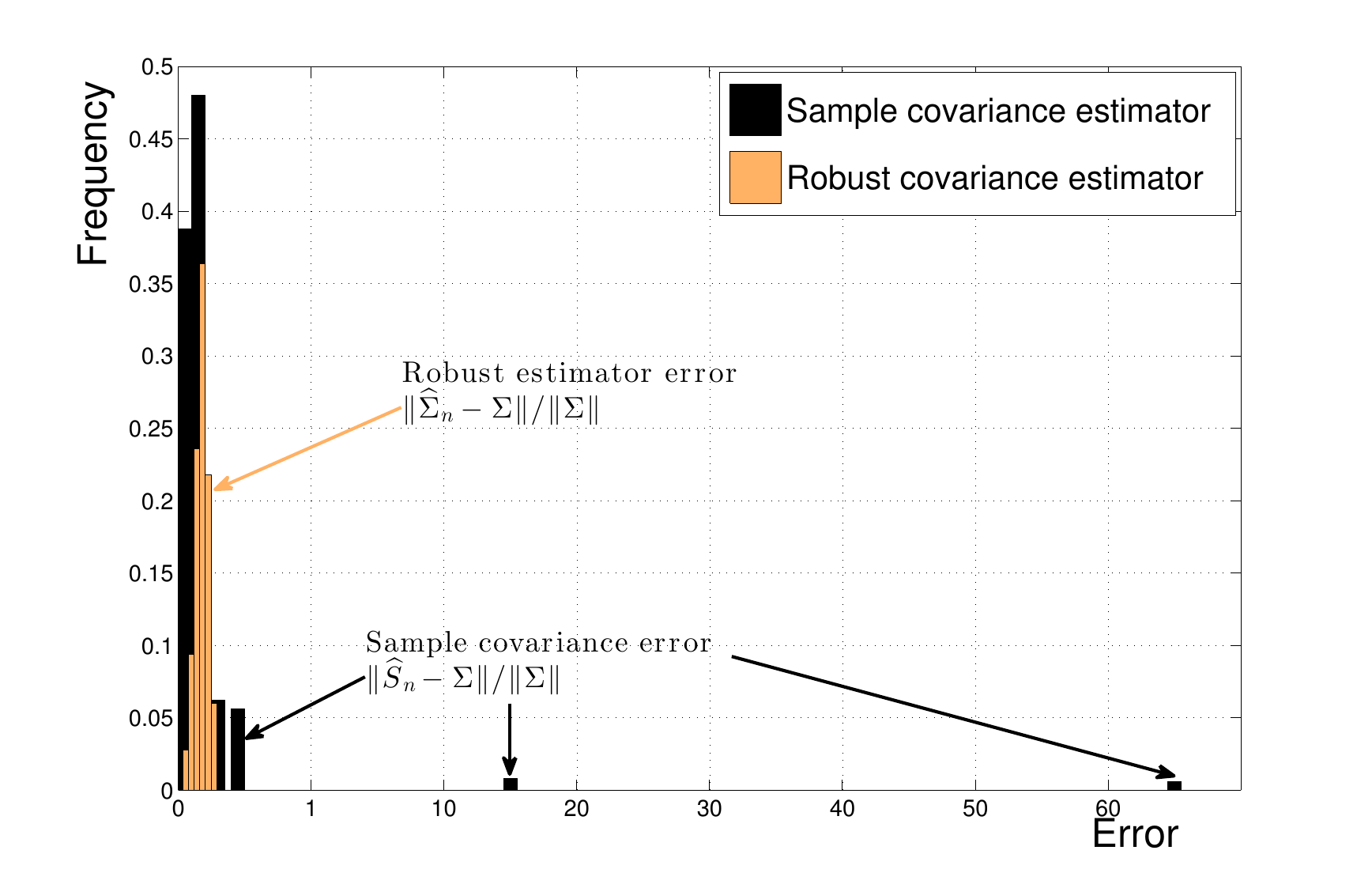}
    \label{cov100-1000}}
  \subfloat[First principal component estimation error]{
    \includegraphics[width=0.55\textwidth]{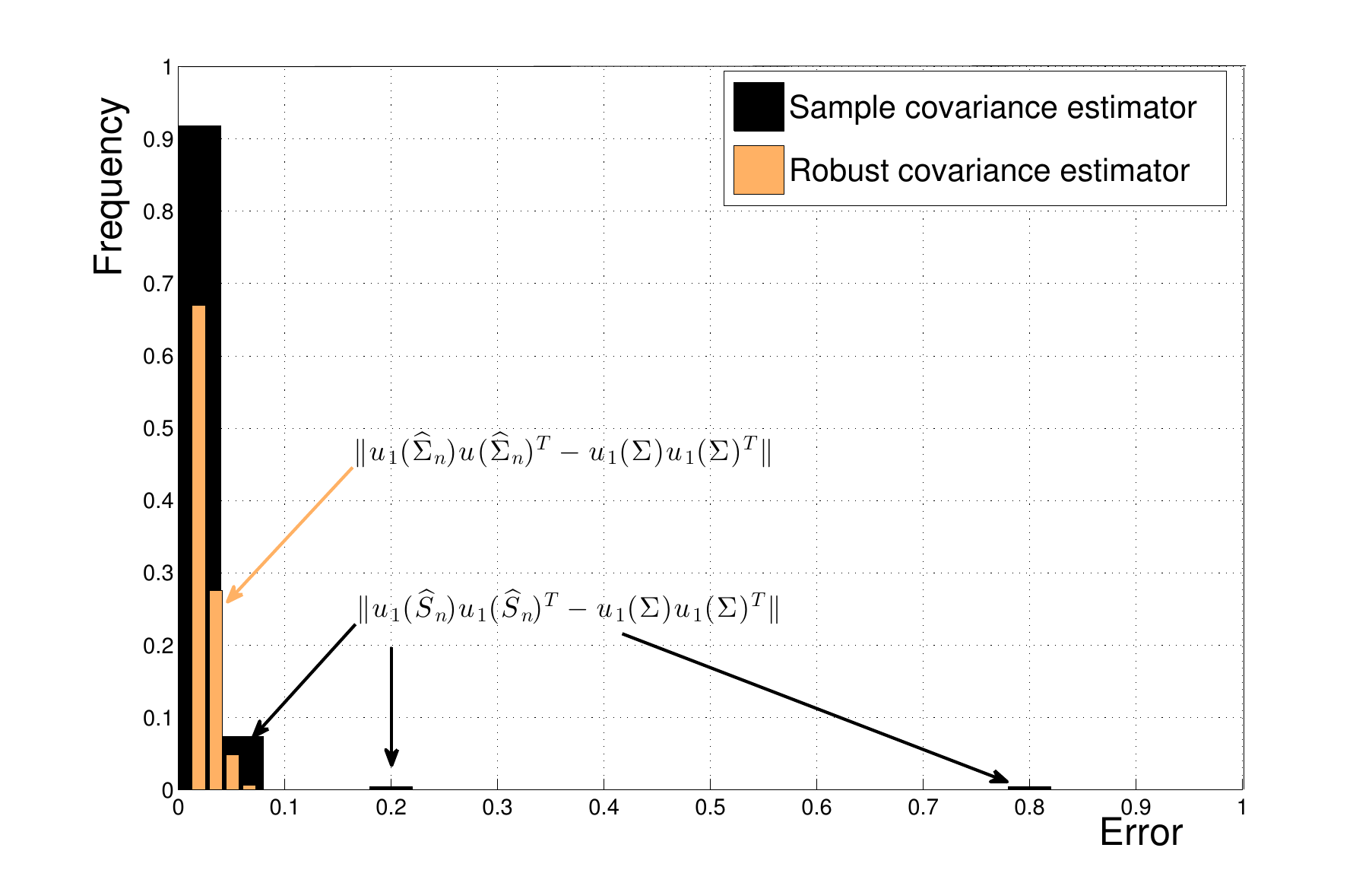}
    \label{pca100-1000}}
  \caption{Sample size $n = 1000$, dimension $d = 100$.}
\end{figure}

\section*{Acknowledgements}
I want to thank L. Goldstein, A. Juditsky, A. Nemirovski, as well as the anonymous Referees and the Associate Editor for their insightful suggestions that helped to improve the quality of presentation.

\bibliographystyle{imsart-nameyear}
\bibliography{bibliography}

\appendix

\section{Supplementary technical results}
\label{section:technical}

\begin{lemma}
\label{lemma:gradient}
Let $F:\mb R\mapsto \mb R$ be a continuously differentiable function, and $S\in \mb C^{d\times d}$ be a self-adjoint matrix. Then the gradient of $G(S):=\tr F(S)$ is 
\[
\nabla G(S) = F'(S),
\]  
where $F'$ is the derivative of $F$ and $F'(S): \mb C^{d\times d}\mapsto \mb C^{d\times d}$ is the matrix function in the sense of definition \ref{matrix-function}. 
\end{lemma}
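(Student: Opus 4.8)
The plan is to identify the gradient of $G$ at a self-adjoint $S$ via its directional derivatives. Since the real vector space of self-adjoint matrices in $\mb C^{d\times d}$ is Euclidean with inner product $\dotp{A}{B}=\tr(A^\ast B)$, it suffices to show that for every self-adjoint $H$,
\[
\frac{d}{dt}\Big|_{t=0}\tr F(S+tH) = \tr\l( F'(S)\,H \r) = \dotp{F'(S)}{H},
\]
where $F'(S)$ is read in the sense of Definition \ref{matrix-function} applied to the scalar derivative $F'$; this identity then forces $\nabla G(S) = F'(S)$.

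First I would verify the identity for monomials $F(x)=x^m$: expanding $(S+tH)^m$ and keeping the term linear in $t$ gives $\sum_{i=0}^{m-1} S^i H\,S^{m-1-i}$, and the cyclic invariance of the trace collapses this to $m\,\tr\l(S^{m-1}H\r) = \tr\l((x^m)'(S)\,H\r)$; by linearity the identity extends to all polynomials $F$. For a general $F\in C^1$, fix the spectral decomposition $S = U\Lambda U^\ast$ with $\Lambda=\mathrm{diag}(\lambda_1,\ldots,\lambda_d)$ and set $\widetilde H = U^\ast H U$. For $|t|$ small the eigenvalues of $S+tH$ stay in a fixed compact interval $\mb T_0$, so only the values of $F$ near $\mb T_0$ are relevant, and the Daleckii--Krein (divided-difference) formula (see, e.g., \cite{bhatia1997matrix}) gives
\[
\frac{d}{dt}\Big|_{t=0} F(S+tH) = U\l( F^{[1]}(\Lambda)\circ \widetilde H \r) U^\ast,
\]
where $\circ$ is the entrywise product and $F^{[1]}(\Lambda)_{ij} = \frac{F(\lambda_i)-F(\lambda_j)}{\lambda_i-\lambda_j}$ for $\lambda_i\neq\lambda_j$, with $F^{[1]}(\Lambda)_{ii}=F'(\lambda_i)$. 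Taking the trace, only the diagonal of the Hadamard product survives, so every divided-difference term with $\lambda_i\neq\lambda_j$ drops out and
\[
\frac{d}{dt}\Big|_{t=0}\tr F(S+tH) = \sum_{i=1}^d F'(\lambda_i)\,\widetilde H_{ii} = \tr\l( F'(\Lambda)\,\widetilde H \r) = \tr\l( U F'(\Lambda) U^\ast\,H \r) = \tr\l( F'(S)\,H \r),
\]
which is exactly the claim. An alternative to invoking Daleckii--Krein is to approximate $F$ and $F'$ uniformly on a neighbourhood of $\mb T_0$ by polynomials (Weierstrass) and pass to the limit, using continuity of $A\mapsto \tr F(A)$ and $A\mapsto \tr(F'(A)H)$ on self-adjoint matrices with spectrum in $\mb T_0$.

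The only genuinely analytic point — hence the main obstacle — is the justification of differentiation under the trace for a merely continuously differentiable $F$ (equivalently, the validity of the Daleckii--Krein formula, or of the polynomial-approximation limiting argument) rather than for polynomials or analytic functions. Once that is in place, the rest is bookkeeping with the cyclic trace together with the observation that off-diagonal divided differences are annihilated by it.
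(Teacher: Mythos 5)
Your proof is correct and follows the same skeleton as the paper's: compute the directional derivative for monomials, collapse $\sum_i \tr(S^{i}HS^{m-1-i})$ to $m\,\tr(S^{m-1}H)$ via cyclic invariance of the trace, extend by linearity to polynomials, then pass to general $C^1$ functions. Where you diverge is in how that last step is closed. The paper simply cites a polynomial-approximation argument from Bhatia; you instead offer a self-contained route via the Daleckii--Krein divided-difference formula, and observe that taking the trace of $U\bigl(F^{[1]}(\Lambda)\circ \widetilde H\bigr)U^\ast$ annihilates every off-diagonal divided-difference term, leaving exactly $\sum_i F'(\lambda_i)\widetilde H_{ii}=\tr\bigl(F'(S)H\bigr)$. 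That is a pleasant alternative: it makes the cancellation structure (and the role of the trace as a "diagonal extractor") completely explicit, and it pinpoints precisely where $C^1$ regularity enters. You also mention the Weierstrass-approximation route as a fallback, which is the one the paper actually invokes. Both are valid; you correctly identify that the only genuinely analytic content is in this extension step, the rest being bookkeeping with cyclic trace.
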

\begin{proof}
We will first check the claim assuming that $F$ is a polynomial of the form $F(x)=x^k, \ k\in \mb N$. 
Let $H=H^\ast$ be a self-adjoint operator, and consider the directional derivative $d G(S;H)$ of $G$ in direction $H$:
\begin{align*}
dG(S;H)&=\lim_{t\to 0}\frac{1}{t} \tr \l(  (S+tH)^k - S^k\r)= 
\sum_{j=1}^{k} \tr \l( S^{j -1} H S^{k - j} \r) \\ 
&
=  \tr \l(k S^{k-1} H \r) = \dotp{F'(S)}{H}, 
\end{align*}
hence the claim holds for monomials. 
By linearity, it also holds for arbitrary polynomials. 
It remains to extend the claim to arbitrary continuously differentiable function via a standard approximation argument (for instance, see \citep[][chapter 5, section 3]{bhatia1997matrix}).

\end{proof}

\begin{lemma}
\label{lemma:psi_alpha}
Let $1<\alpha\leq 2$ and $c_\alpha=\frac{\alpha-1}{\alpha}\vee \sqrt{\frac{2-\alpha}{\alpha}}$. 
Then $1+y+c_\alpha|y|^\alpha>0$ and
\[
-\log(1+y+c_\alpha |y|^\alpha)\leq \log(1-y +c_\alpha |y|^{\alpha})  \quad \text{for all $y\in \mb R$.}
\]
\end{lemma}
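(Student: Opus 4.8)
The plan is to reduce the matrix-free statement to two scalar facts: first that the quadratic-type polynomial $p_\alpha(y) := 1 + y + c_\alpha |y|^\alpha$ is strictly positive on $\mb R$, and second that $p_\alpha(y)\, p_\alpha(-y) \geq 1$ for all $y$, since the claimed inequality $-\log p_\alpha(y) \leq \log p_\alpha(-y)$ is equivalent (using positivity of both factors) to $\log\big( p_\alpha(y)\, p_\alpha(-y)\big) \geq 0$, i.e. to $p_\alpha(y)\, p_\alpha(-y) \geq 1$. Both facts are symmetric in $y\mapsto -y$ for the product, so it suffices to analyze $g(y) := p_\alpha(y)\, p_\alpha(-y)$ on $y \geq 0$, where $g(y) = (1 + y + c_\alpha y^\alpha)(1 - y + c_\alpha y^\alpha) = (1 + c_\alpha y^\alpha)^2 - y^2$.

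For strict positivity of $p_\alpha$: on $y\geq 0$ this is obvious since all terms are nonnegative and the constant term is $1$. On $y < 0$, writing $y = -s$ with $s > 0$, we need $1 - s + c_\alpha s^\alpha > 0$; this is exactly the statement that $q(s) := c_\alpha s^\alpha - s + 1 > 0$ for $s>0$. I would minimize $q$ over $s>0$: $q'(s) = \alpha c_\alpha s^{\alpha-1} - 1 = 0$ gives $s_\ast = (\alpha c_\alpha)^{-1/(\alpha-1)}$, and one checks $q(s_\ast) = 1 - s_\ast(1 - 1/\alpha) = 1 - \frac{\alpha-1}{\alpha}(\alpha c_\alpha)^{-1/(\alpha-1)}$. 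The choice $c_\alpha \geq \frac{\alpha-1}{\alpha}$ forces $\alpha c_\alpha \geq \alpha - 1$, but one actually needs $(\alpha c_\alpha)^{1/(\alpha-1)} \geq \frac{\alpha-1}{\alpha}$; since $\frac{\alpha-1}{\alpha}<1$ and $\alpha c_\alpha \geq \alpha - 1 \geq $ (for $\alpha$ near $1$ this needs care) — this is the spot where the precise form of $c_\alpha$ matters, and I would verify the bound $q(s_\ast)\geq 0$ directly from $c_\alpha \geq \frac{\alpha-1}{\alpha}$ by a short computation, treating positivity as the easier half.

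The main inequality $g(y)\geq 1$ for $y\geq 0$ reduces, after expanding $g(y) = (1+c_\alpha y^\alpha)^2 - y^2$, to showing $2 c_\alpha y^\alpha + c_\alpha^2 y^{2\alpha} - y^2 \geq 0$, i.e. $c_\alpha^2 y^{2\alpha} + 2c_\alpha y^\alpha \geq y^2$. For $y\geq 0$ it suffices to show the weaker $2 c_\alpha y^\alpha \geq y^2$ whenever $y\leq$ some threshold, and $c_\alpha^2 y^{2\alpha}\geq y^2$ for larger $y$; more cleanly, I would substitute $u = y^{2-\alpha}\geq 0$ (valid since $\alpha\leq 2$) and show $c_\alpha^2 y^{2\alpha - 2} + 2 c_\alpha y^{\alpha - 2}\cdot y^\alpha \geq \ldots$ — actually the slickest route is: divide by $y^2$ (for $y>0$) to get the requirement $c_\alpha^2 y^{2\alpha-2} + 2 c_\alpha y^{\alpha - 2} \geq 1$, and since $\alpha - 2 \leq 0 \leq 2\alpha - 2$, the left side is a sum of a nondecreasing and a nonincreasing power of $y$; minimizing a function of the form $a t^{2p} + b t^{-p}$ (with $t = y^{?}$) by calculus gives a minimum value involving $c_\alpha$, and the definition $c_\alpha \geq \sqrt{\frac{2-\alpha}{\alpha}}$ is exactly what makes that minimum $\geq 1$. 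The hard part will be this last optimization: getting the exponents bookkept correctly and confirming that $c_\alpha = \frac{\alpha-1}{\alpha}\vee\sqrt{\frac{2-\alpha}{\alpha}}$ is the sharp constant that clears both the positivity constraint (first bullet) and this product-$\geq 1$ constraint simultaneously; the two halves of the max correspond to the two constraints, and I expect each half to be tight for one of them. I would finish by noting the $y=0$ case is trivial ($g(0)=1$) and the $y<0$ case follows from $g(-y)=g(y)$.
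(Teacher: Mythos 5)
Your reduction to the scalar claim $p_\alpha(y)\,p_\alpha(-y)\ge 1$ with $p_\alpha(y)=1+y+c_\alpha|y|^\alpha$, and the restatement of the interesting half as $c_\alpha^2 y^{2\alpha}+2c_\alpha y^\alpha \ge y^2$ for $y\ge0$, is exactly how the paper proceeds. But the two places you flag as "needing a short computation" are precisely where your plan goes wrong.

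First, the positivity claim. You assert you can get $q(s_\ast)=1-s_\ast\frac{\alpha-1}{\alpha}\ge0$ (with $s_\ast=(\alpha c_\alpha)^{-1/(\alpha-1)}$) "directly from $c_\alpha\ge\frac{\alpha-1}{\alpha}$." That is false. Take $\alpha=3/2$ and plug in $c_\alpha=\frac{\alpha-1}{\alpha}=1/3$: then $s_\ast=(\frac{3}{2}\cdot\frac13)^{-2}=4$ and $q(s_\ast)=1-4\cdot\frac13=-\frac13<0$. For $\alpha$ in roughly $(1,\,1+1/\sqrt 2)$ it is the \emph{other} branch, $c_\alpha\ge\sqrt{(2-\alpha)/\alpha}$, that rescues positivity, not $\frac{\alpha-1}{\alpha}$. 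So your bookkeeping of "which half of the max handles which constraint" is inverted, and neither half alone handles either constraint across the full range $\alpha\in(1,2]$. A cleaner escape that you could use: once the product bound $p_\alpha(y)p_\alpha(-y)\ge1$ is established algebraically (no logarithms yet), positivity follows for free, because $p_\alpha(y)>0$ trivially for $y\ge0$ and then $p_\alpha(-y)\ge 1/p_\alpha(y)>0$.

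Second, for the main inequality your proposed route — divide by $y^2$, minimize $c_\alpha^2 y^{2\alpha-2}+2c_\alpha y^{\alpha-2}$ by calculus — is not of the form $a\,t^{2p}+b\,t^{-p}$ as you claim: the exponents $2\alpha-2$ and $\alpha-2$ are in ratio $-2$ only when $\alpha=3/2$, so the "slick" closed-form minimum you expect does not materialize; working out the actual minimizer $y_\ast^\alpha=\frac{2-\alpha}{c_\alpha(\alpha-1)}$ leads to the requirement $c_\alpha\ge (2-\alpha)^{1-\alpha/2}(\alpha-1)^{\alpha-1}\alpha^{-\alpha/2}$, which then needs a further case split against both halves of the max. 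The paper sidesteps all of this with a one-line weighted AM--GM (Young) inequality: write $y^2=y^{1-\tau}y^{1+\tau}\le \frac{1}{p}y^{p(1-\tau)}+\frac1q y^{q(1+\tau)}$ with $\frac1p+\frac1q=1$; choosing $p=\frac{\alpha}{2(\alpha-1)}$, $q=\frac{\alpha}{2-\alpha}$, $\tau=3-2\alpha$ gives exactly $y^2\le\frac{2(\alpha-1)}{\alpha}y^\alpha+\frac{2-\alpha}{\alpha}y^{2\alpha}$, and then $c_\alpha\ge\frac{\alpha-1}{\alpha}$ beats the first coefficient while $c_\alpha\ge\sqrt{(2-\alpha)/\alpha}$ beats the second, with no optimization required. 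Notice both halves of the max are used here, one per coefficient — not one per lemma conclusion as your plan predicts.
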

\begin{proof}
To check the first claim, it is enough to note that $f(y)=1+y+c_\alpha|y|^\alpha$ is convex and its minimum is attained for 
$y_m=-\l(\frac{1}{\alpha c_\alpha} \r)^{1/(\alpha-1)}$. 
It is easy to check that $f(y_m)=1-y_m+\frac{y_m}{\alpha}$, which implies that $f(y_m)>0\iff c_\alpha>\frac{\alpha-1}{\alpha^2}$ which always holds since $c_\alpha\geq \frac{\alpha-1}{\alpha}$ and $\alpha>1$.

For the second part, it is enough to show that $(1+c_\alpha |y|^{\alpha} + y)(1+ c_\alpha |y|^{\alpha} - y)\geq 1$ for all $y\in \mb R$, which is equivalent to claiming that $c_\alpha^2 y^{2\alpha}+2c_\alpha y^\alpha\geq y^2, \ y\geq 0$.  
Note that for any $\tau\in(-1,1)$, $p,q>0$ such that $1/p+1/q=1$, and $y\geq 0$,
\[
y^2=y^{1-\tau}y^{1+\tau}\leq \frac{y^{p(1-\tau)}}{p} + \frac{y^{q(1+\tau)}}{q}.
\]
Choosing $p:=\frac{\alpha}{2(\alpha-1)}$, $q:=\frac{\alpha}{2-\alpha}$, we get 
$y^2\leq \frac{2(\alpha-1)}{\alpha}y^{\alpha}+\frac{2-\alpha}{\alpha}y^{2\alpha}$ which is further bounded above by $2c_\alpha y^\alpha+c_\alpha^2 y^{2\alpha}$ for $c_\alpha=\frac{\alpha-1}{\alpha}\vee \sqrt{\frac{2-\alpha}{\alpha}}$.
\end{proof}

\begin{lemma}
\label{lemma:lipschitz}
Functions $\psi_1(x)$ and $\psi_2(x)$ defined in Remark \ref{remark:psi2} are operator Lipschitz, with Lipschitz constants independent of the dimension. 
\end{lemma}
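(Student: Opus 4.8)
The plan is to deduce both statements from a single dimension-free criterion: if $g\colon\mb R\to\mb R$ is such that $g'$ belongs to the Wiener algebra, i.e. $g'(x)=\int_{\mb R}e^{isx}\,d\mu(s)$ for some finite complex Borel measure $\mu$, then $g$ is operator Lipschitz with constant at most $|\mu|(\mb R)$, uniformly in the dimension. The one nontrivial input is the elementary estimate
\[
\|e^{isA}-e^{isB}\|\leq |s|\,\|A-B\|\qquad\text{for all self-adjoint }A,B\in\mb C^{d\times d},\ s\in\mb R,
\]
which I would obtain from Duhamel's formula $e^{isA}-e^{isB}=\int_0^1\frac{d}{dt}e^{is((1-t)B+tA)}\,dt$ together with the fact that $e^{isC}$ is unitary when $C$ is self-adjoint. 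Integrating the Fourier representation gives $g(x)=g(0)+\mu(\{0\})x+\int_{s\neq 0}\frac{e^{isx}-1}{is}\,d\mu(s)$, hence, by the functional calculus of Definition \ref{matrix-function}, $g(A)-g(B)=\mu(\{0\})(A-B)+\int_{s\neq 0}\frac{e^{isA}-e^{isB}}{is}\,d\mu(s)$; the triangle inequality and the displayed bound then give $\|g(A)-g(B)\|\leq |\mu|(\mb R)\,\|A-B\|$, with a constant that does not see $d$.

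It remains to check $\psi_1',\psi_2'\in$ Wiener algebra, for which it suffices to verify $\psi_i'\in H^1(\mb R)$ (in one dimension, $h,h'\in L^2$ implies $\widehat h\in L^1$ by Plancherel and Cauchy--Schwarz). For $\psi_1$, rewriting $1\pm x+x^2/2=\tfrac12\big((x\pm1)^2+1\big)$ gives $\psi_1'(x)=\frac{2(1+x)}{(1+x)^2+1}$ for $x\geq 0$ and $\psi_1'(x)=-\frac{2(x-1)}{(x-1)^2+1}$ for $x<0$; one checks $\psi_1'(0^{+})=\psi_1'(0^{-})=1$ and $\psi_1''(0^{+})=\psi_1''(0^{-})=0$, so $\psi_1'\in C^1(\mb R)$. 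Since the denominators never drop below $1$, $\psi_1'$ and $\psi_1''$ are bounded and decay like $2/|x|$ and $2/x^2$ respectively, so $\psi_1',\psi_1''\in L^2(\mb R)$ and thus $\psi_1'\in H^1$. For $\psi_2$, the derivative is the continuous, compactly supported, piecewise-linear tent $\psi_2'(x)=(1-|x|)_+$, which lies in $H^1(\mb R)$ at once; equivalently $\psi_2'=\mathbf 1_{[-1/2,1/2]}\ast\mathbf 1_{[-1/2,1/2]}$, so $\widehat{\psi_2'}$ is a squared sinc and visibly integrable. Feeding these into the criterion of the first paragraph yields the claimed dimension-independent Lipschitz constants $L$.

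The main obstacle is conceptual rather than computational: one must resist the temptation to argue ``$\psi_i$ is $C^{1,1}$, hence operator Lipschitz,'' because $C^{1,1}$ is not a sufficient condition for operator Lipschitzness in the operator norm --- it is only sufficient in the Frobenius norm, where every scalar-Lipschitz function already works with the same constant. The genuine content is therefore the extra decay and regularity of $\psi_i'$ that places it in the Wiener algebra, together with the fact that the resulting constant is dimension-free, which is precisely what the bound $\|e^{isA}-e^{isB}\|\leq|s|\,\|A-B\|$ provides. A smaller but real point of care is the behavior at the junctions $x=0$ for $\psi_1$ and $x=\pm1$ for $\psi_2$: these are exactly the places where operator Lipschitzness could break down, so checking that $\psi_1'$ is $C^1$ there (no corner) and that $\psi_2'$ is continuous there is not a formality.
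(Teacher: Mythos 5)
Your proof is correct and, unlike the paper's --- which simply cites Theorems 1.6.1 and 1.1.1 of Aleksandrov and Peller's survey \cite{aleksandrov2016operator} without reproducing the arguments --- it is self-contained. The sufficient criterion you invoke, namely that $g$ is operator Lipschitz with dimension-independent constant $|\mu|(\mb R)$ whenever $g'$ is the Fourier transform of a finite complex measure $\mu$, is the classical Peller-type condition, and it is almost certainly what the cited theorems reduce to in these two cases. The Duhamel estimate $\|e^{isA}-e^{isB}\|\leq |s|\,\|A-B\|$ together with unitary invariance of the operator norm is exactly the right elementary input, and the reduction of Wiener-algebra membership to $\psi_i'\in H^1(\mb R)$ via Plancherel and Cauchy--Schwarz is both standard and correctly executed. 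Your explicit simplification $1\pm x+x^2/2=\tfrac12\bigl((x\pm 1)^2+1\bigr)$ makes the $C^1$-matching of $\psi_1'$ across $x=0$ and its $O(1/|x|)$, $O(1/x^2)$ decay transparent, and the identification $\psi_2'=(1-|x|)_+$ (which implicitly corrects the evident sign typo in \eqref{eq:psi3} for $x<-1$) gives the Wiener-algebra membership at a glance. What your route buys over the paper's is transparency: one sees an explicit, dimension-free constant and exactly which structural features of the $\psi_i$ --- decay of the first two derivatives and continuity at the junction points, not mere scalar Lipschitzness --- make the operator-norm Lipschitz property hold, rather than delegating to an opaque citation.
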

\begin{proof}
Lipshitz property of $\psi_1(x)$ follows from Theorem 1.6.1 in \cite{aleksandrov2016operator}. 
Result for $\psi_2(x)$ follows from Theorem 1.1.1 in the same paper. 
\end{proof}

\subsection{Proof of Lemma \ref{lemma:dilation}}
\label{proof:dilation}

For a self-adjoint matrices $R,Q$, $\|R\|\geq \|Q\|$ iff $\|R^2\|\geq \|Q^2\|$.  
Clearly, 
\[
\begin{pmatrix}
S & A \\
A^\ast & T
\end{pmatrix}^2 = 
\begin{pmatrix}
S^2 + AA^\ast &  SA + AT \\
A^\ast S + T A^\ast & T^2 + A^\ast A
\end{pmatrix}
\]
It implies that 
$\l\|\begin{pmatrix}
S & A \\
A^\ast & T
\end{pmatrix}^2 \r\| \geq 
\l\| S^2 +AA^\ast \r\|\geq \|AA^\ast\|$ 
and 
$\l\|\begin{pmatrix}
S & A \\
A^\ast & T
\end{pmatrix}^2 \r\| \geq 
\l\| T^2 +A^\ast A \r\|\geq \|A^\ast A\|$. 
Since 
$\begin{pmatrix}
0 & A \\
A^\ast & 0
\end{pmatrix}^2=
\begin{pmatrix}
AA^\ast & 0 \\
0 & A^\ast A
\end{pmatrix}
$, we obtain 
\[
\l\|\begin{pmatrix}
S & A \\
A^\ast & T
\end{pmatrix}^2 \r\|
\geq
\l\| \begin{pmatrix}
0 & A \\
A^\ast & 0
\end{pmatrix}^2\r\|,
\]
and result follows.
\qed

The following lemma is a generalization of Chebyshev's association inequality. 
\begin{lemma}[FKG inequality]
\label{lemma:FKG}
Let $f,g:\mathbb{R}^d\rightarrow\mathbb{R}$ be two functions that are non-decreasing with respect to each coordinate. 
Moreover, let $V=(V_1,V_2,\ldots,V_d)$ be a random vector taking values in $\mathbb{R}^d$. 
Then 
\[
\mb E f(V)g(V)\geq\mb E f(V) \mb E g(V).
\]
\end{lemma}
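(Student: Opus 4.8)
The plan is to prove the inequality by induction on the dimension $d$, using the classical one‑dimensional association inequality as the base case and the independence of the coordinates $V_1,\ldots,V_d$ in the inductive step. Throughout I assume that all the random variables appearing below are integrable, which is what is needed for the statement to make sense; if one prefers to avoid integrability hypotheses, one first truncates $f$ and $g$ to bounded non‑decreasing functions, proves the inequality for those, and then passes to the limit by monotone convergence.

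For the base case $d=1$, I would use the ``two independent copies'' trick. Let $V'$ be an independent copy of $V$. Since $f$ and $g$ are both non‑decreasing, for any $x,y\in\mathbb{R}$ the numbers $f(x)-f(y)$ and $g(x)-g(y)$ have the same sign, so $\big(f(x)-f(y)\big)\big(g(x)-g(y)\big)\ge 0$. Setting $x=V$, $y=V'$ and taking expectations gives $\mathbb{E}\big[(f(V)-f(V'))(g(V)-g(V'))\big]\ge 0$; expanding the product and using that $V$ and $V'$ are independent with the same law yields $2\mathbb{E}[f(V)g(V)]-2\mathbb{E}[f(V)]\mathbb{E}[g(V)]\ge 0$, which is exactly the claim.

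For the inductive step, assume the result in dimension $d-1$ and write $V=(V_1,\ldots,V_d)$ with independent coordinates. Conditioning on $V_d=v$, set $f_v(x_1,\ldots,x_{d-1}):=f(x_1,\ldots,x_{d-1},v)$ and define $g_v$ analogously; these functions are non‑decreasing in each of their $d-1$ arguments, and $(V_1,\ldots,V_{d-1})$ are independent, so the induction hypothesis applied to them gives
\[
\mathbb{E}\big[f(V)g(V)\mid V_d=v\big]\ \ge\ F(v)\,G(v),\qquad F(v):=\mathbb{E}[f(V)\mid V_d=v],\ \ G(v):=\mathbb{E}[g(V)\mid V_d=v].
\]
Both $F$ and $G$ are non‑decreasing functions of $v$, since $f$ and $g$ are non‑decreasing in the last coordinate and integrating out $V_1,\ldots,V_{d-1}$ (whose law does not depend on $v$) preserves monotonicity. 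Applying the one‑dimensional case to $F(V_d)$ and $G(V_d)$ gives $\mathbb{E}[F(V_d)G(V_d)]\ge\mathbb{E}[F(V_d)]\mathbb{E}[G(V_d)]=\mathbb{E}[f(V)]\mathbb{E}[g(V)]$, and then the tower property yields
\[
\mathbb{E}[f(V)g(V)]=\mathbb{E}\big[\mathbb{E}[f(V)g(V)\mid V_d]\big]\ \ge\ \mathbb{E}[F(V_d)G(V_d)]\ \ge\ \mathbb{E}[f(V)]\mathbb{E}[g(V)],
\]
completing the induction.

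There is no genuinely hard step in this argument — it is entirely standard. The only points requiring mild care are verifying that conditional expectations of coordinate‑wise monotone functions stay monotone (immediate, since the conditioning variable enters only through a coordinate in which both functions increase) and the bookkeeping of integrability so that every expectation above is well defined, handled by the truncation remark made at the outset.
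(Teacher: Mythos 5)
Your proof is correct and follows the standard argument behind the result the paper cites (Harris' inequality, Theorem 2.15 of \cite{boucheron2013concentration}): base case $d=1$ via the two-independent-copies trick $\mathbb{E}\bigl[(f(V)-f(V'))(g(V)-g(V'))\bigr]\ge 0$, then induction on the dimension by conditioning on the last coordinate. One point you should flag rather than slip in: in the inductive step you write ``$V=(V_1,\ldots,V_d)$ with independent coordinates,'' but the lemma as printed puts no condition whatsoever on the joint law of $V$, and independence of the coordinates is in fact necessary. Without it the inequality is false: with $d=2$, $U$ centered and non-degenerate, $V=(U,-U)$, $f(x_1,x_2)=x_1$ and $g(x_1,x_2)=x_2$ are coordinate-wise non-decreasing, yet $\mathbb{E}[f(V)g(V)]=-\var(U)<0=\mathbb{E}f(V)\,\mathbb{E}g(V)$. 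Your inductive step invokes independence exactly where it is needed --- to conclude that the conditional law of $(V_1,\ldots,V_{d-1})$ given $V_d=v$ does not depend on $v$, so that $F(v)=\mathbb{E}[f(V_1,\ldots,V_{d-1},v)]$ is well defined and non-decreasing --- so the argument is sound, but it establishes the correct version of the lemma with independence added, an amendment to the printed hypotheses that deserves to be stated explicitly rather than assumed tacitly.
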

\begin{proof}
See Theorem 2.15 in \citep{boucheron2013concentration}.
\end{proof}
The following corollary is immediate.
\begin{corollary}
\label{corollary:FKG-bound}
Let $Z\in \mb R^d$ be a centered random vector with covariance matrix $\Sigma$. Then
\[
\sigma_0^2 := \l\| \mb E \|Z\|_2^2 Z Z^T \r\|\geq \mb E\|Z\|_2^2  \left\| \mb E Z Z^T \right\|
=\tr \Sigma \,\|\Sigma\|.
\]
\end{corollary}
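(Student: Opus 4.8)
The plan is to test the positive semidefinite matrix $M := \mb E\bigl[\|Z\|_2^2\,ZZ^T\bigr]$ against a leading eigenvector of $\Sigma$, and then to recognize the resulting scalar inequality as an instance of the association (FKG) inequality of Lemma~\ref{lemma:FKG}.

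First I would fix a unit vector $u$ with $u^T\Sigma u = \|\Sigma\|$ (a top eigenvector of the positive semidefinite matrix $\Sigma$). Since $M\succeq 0$, we have $\sigma_0^2 = \|M\| = \lambda_{\max}(M) \geq u^T M u = \mb E\bigl[\|Z\|_2^2\,\dotp{Z}{u}^2\bigr]$. Because $Z$ is centered, $\mb E\|Z\|_2^2 = \mb E\,\tr(ZZ^T) = \tr\Sigma$ and $\mb E\dotp{Z}{u}^2 = u^T\Sigma u = \|\Sigma\|$, so it is enough to establish the positive-association bound
\begin{align*}
\mb E\bigl[\|Z\|_2^2\,\dotp{Z}{u}^2\bigr] \;\geq\; \mb E\|Z\|_2^2\cdot\mb E\dotp{Z}{u}^2 .
\end{align*}

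To do this I would first rotate coordinates so that $u = e_1$ — legitimate since $\|Z\|_2$ and the operator norm are unitarily invariant and orthogonal conjugation sends $\Sigma$ to the covariance of the rotated vector — reducing the claim to $\mb E\bigl[\|Z\|_2^2\,Z_1^2\bigr]\geq \mb E\|Z\|_2^2\cdot\mb E Z_1^2$. Then I would apply Lemma~\ref{lemma:FKG} to the random vector $V := (|Z_1|,\dots,|Z_d|)$ together with $f(v) := \sum_{i=1}^d v_i^2$ and $g(v) := v_1^2$; both $f$ and $g$ are non-decreasing in each coordinate on $\mb R_{\geq 0}^d\supseteq\supp(V)$, and $f(V) = \|Z\|_2^2$, $g(V)=Z_1^2$, so the lemma gives exactly $\mb E[f(V)g(V)]\geq\mb E f(V)\,\mb E g(V)$. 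Chaining this with the previous display yields $\sigma_0^2\geq \tr\Sigma\,\|\Sigma\|$.

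The step I expect to be the crux is choosing the right objects to feed into the association inequality: the maps $z\mapsto\|z\|_2^2$ and $z\mapsto\dotp{z}{u}^2$ are not monotone on $\mb R^d$, and the argument only goes through after passing to the vector of coordinate-wise absolute values — where they become genuinely coordinatewise non-decreasing — and invoking Lemma~\ref{lemma:FKG} for that (dependent) vector, so one must be careful that the hypotheses under which FKG is applied actually cover $(|Z_1|,\dots,|Z_d|)$. The remaining ingredients — picking $u$ as the leading eigenvector, the orthogonal reduction to $u=e_1$, and the identifications $\mb E\|Z\|_2^2=\tr\Sigma$ and $\mb E\dotp{Z}{u}^2=\|\Sigma\|$ — are routine.
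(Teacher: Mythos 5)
Your argument follows the same route as the paper's own proof: fix a top eigenvector $u$ of $\Sigma$, lower-bound the operator norm of $M:=\mb E\big[\|Z\|_2^2\,ZZ^T\big]$ by the quadratic form $u^T M u$, and reduce the claim to the scalar association inequality $\mb E\big[\|Z\|_2^2\,\langle Z,u\rangle^2\big]\geq \mb E\|Z\|_2^2\cdot\mb E\langle Z,u\rangle^2$ via Lemma~\ref{lemma:FKG}. You are in fact more careful than the paper at the monotonicity step: the paper applies the lemma to $f(V)=V_1^2$ and $g(V)=\|V\|_2^2$, which are not coordinatewise non-decreasing on $\mb R^d$; your passage to $(|Z_1|,\dots,|Z_d|)$ fixes that cosmetic defect.

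Nevertheless there is a genuine gap, and it is shared with the paper's own proof. Harris's inequality --- the result that Lemma~\ref{lemma:FKG} is citing --- requires the coordinates $V_1,\ldots,V_d$ to be \emph{independent}; the lemma statement in the paper silently drops this hypothesis, and without independence (or at least an FKG lattice condition on the joint law) the product inequality for coordinatewise non-decreasing $f,g$ is false once $d\geq 2$. Neither the paper's proof nor yours verifies anything of the sort, and nothing in the hypotheses forces $(|Z_1|,\dots,|Z_d|)$ (nor the paper's rotated vector $(V_1,\dots,V_d)$) to have independent coordinates. The defect is not reparable by a cleverer choice of $f,g$: the conclusion itself can fail. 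For instance, let $Z\in\mb R^2$ take the values $\pm e_1$ each with probability $0.3$ and $\pm 1.1\,e_2$ each with probability $0.2$. Then $\Sigma=\mathrm{diag}(0.6,\,0.484)$, so $\tr\Sigma\cdot\|\Sigma\|=1.084\times 0.6=0.6504$, while $M=\mathrm{diag}(0.6,\,0.58564)$ and hence $\|M\|=0.6<0.6504$. Here $\|Z\|_2^2$ and $\langle Z,e_1\rangle^2$ are negatively correlated (a large $|Z_2|$ forces $Z_1=0$), so the association inequality runs the wrong way. The missing ingredient is a positive-dependence assumption on $Z$ (independence of coordinates would do), and a finite-fourth-moment hypothesis alone does not supply it.
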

\begin{proof}
Consider any unit vector $\mathbf{v}\in\mathbb{R}^d$. 
It is enough to show $\mb E\l[ (\mathbf{v}^TZ)^2\|Z\|_2^2 \r] \geq \mb E (\mathbf{v}^TZ)^2 \mb E \|Z\|_2^2$. 
We make the change the coordinates by considering an orthonormal basis 
$\{\mathbf{v}_1,\cdots,\mathbf{v}_d\}$ with 
$\mathbf{v}_1=\mathbf{v}$. 
Letting $V_i = \mathbf{v}_i^T Z$, $i=1,2,\cdots,d$, we obtain
\[
\mb E (\mathbf{v}^T Z)^2\|Z\|_2^2 = \mb E V_1^2\|V\|_2^2\geq \mb E V_1^2 \mb E \|V\|_2^2,
\]
where the last inequality follows from Lemma \ref{lemma:FKG} inequality by setting 
$f\left(V_1,\ldots,V_d\right) := V_1^2$ and $g\left(V_1,\ldots,V_d\right) := \|V\|_2^2$.
\end{proof}

\begin{lemma}
\label{lemma:lower}
Let $\gamma_1,\ldots,\gamma_n$ be independent random variables with density $p(x)=e^{-2|x|}, \ x\in \mb R$. 
Then for all $n>1$
\begin{enumerate}
\item $\mb E\max\l( |\gamma_1|,\ldots,|\gamma_n| \r)\geq \frac 1 2\log n$.
\item $\mb E\max\l( \gamma_1,\ldots,\gamma_n \r)\geq \frac 1 4\log n$.
\item $\Pr\l( \max\l( |\gamma_1|,\ldots,|\gamma_n| \r) \geq \l(\frac 1 2 - \tau \r) \log n \r)\geq c(\tau)>0$ for every $0<\tau<1/2$.
\end{enumerate}

\end{lemma}
\begin{proof}
Note that, since the distribution of $\gamma_j$'s is symmetric and $\mb E\max\l( \gamma_1,\ldots,\gamma_n \r)$ is positive, 
\begin{align*}
\mb E\max\l( |\gamma_1|,\ldots,|\gamma_n| \r) &
= \mb E\max\l( \max\l( \gamma_1,\ldots,\gamma_n \r), \max\l( -\gamma_1,\ldots,-\gamma_n \r)   \r) \\
&
\leq 2 \mb E\max\l( \gamma_1,\ldots,\gamma_n \r).
\end{align*}
Next, for $1\leq j\leq n$, $|\gamma_j|$ has exponential distribution with density $\tilde p(x)=2 e^{-2x}I\{ x\geq 0\}$. 
It follows from a well-known fact that 
$\mb E\max\l( |\gamma_1|,\ldots,|\gamma_n| \r) = \frac{1}{2}\sum_{j=1}^n \frac{1}{j}\geq \frac{1}{2}\log n$, and the first and second inequalities follow. 

A standard computation shows that $\mb E\max^2\l(|\gamma_1|,\ldots,|\gamma_n| \r)\leq c_1 \log^2 n$ for some numerical constant $c_1>0$, hence Paley-Zygmund inequality implies the last claim. 
\end{proof}

\section{Tools from probability theory and linear algebra}

We recall several useful results that we will need in the proofs below. 
		
\begin{lemma}[Matrix Hoeffding inequality]
\label{lemma:hoeffding}
Let $Z_1,\ldots,Z_n\in \mb C^{d\times d}$ be a sequence of independent self-adjoint random matrices such that for all $1\leq k\leq n$,
\[
\mb EZ_k = 0 \text{ and } \l\| Z_k \r\|\leq M_k \text{ almost surely}.
\]
Then 
$
\l\|  \sum_{j=1}^n Z_j \r\| \leq t
$
with probability $\geq 1-2d \exp\l( -\frac{t^2}{8\sum_{j=1}^n M_j^2} \r)$.
\end{lemma}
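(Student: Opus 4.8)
Looking at the statement to prove: the Matrix Hoeffding inequality. This is a standard result that follows from the same machinery developed in the paper.

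The plan is to mirror the proof structure of Theorem~\ref{th:main}, replacing the quadratic bound on the log-MGF of $\psi(\theta Y)$ with Hoeffding's lemma. Specifically, I would first establish a moment-generating-function bound analogous to Lemma~\ref{lemma:main}: for any $\theta>0$,
\[
\mb E\,\tr\exp\l(\theta\sum_{j=1}^n Z_j\r)\leq \tr\exp\l(\frac{\theta^2}{2}\sum_{j=1}^n M_j^2 I\r) = d\,\exp\l(\frac{\theta^2}{2}\sum_{j=1}^n M_j^2\r).
\]
The key ingredient is the scalar Hoeffding lemma in its matrix form: if $Z$ is self-adjoint with $\mb EZ=0$ and $\|Z\|\leq M$, then $\mb E\,e^{\theta Z}\preceq e^{\theta^2 M^2/2}I$. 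This follows from the scalar inequality $\mb E e^{\theta X}\leq e^{\theta^2 M^2/2}$ for a mean-zero random variable $X\in[-M,M]$, lifted to matrices via Fact~\ref{fact:01} applied after conditioning, together with the operator monotonicity of $\log$ (Fact~\ref{fact:03}) to write $\log\mb E_{n-1}e^{\theta Z_n}\preceq \frac{\theta^2 M_n^2}{2}I$.

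Next I would peel off the summands one at a time exactly as in Lemma~\ref{lemma:main}: write
\[
\mb E\,\tr\exp\l(\theta\sum_{j=1}^n Z_j\r) = \mb E\,\mb E_{n-1}\tr\exp\l(\theta\sum_{j=1}^{n-1}Z_j + \log e^{\theta Z_n}\r),
\]
apply Lieb's concavity theorem (Fact~\ref{fact:05}) with $H=\theta\sum_{j=1}^{n-1}Z_j$ and Jensen's inequality to pull the conditional expectation inside the logarithm, use the Hoeffding bound $\mb E_{n-1}e^{\theta Z_n}\preceq e^{\theta^2 M_n^2/2}I$ together with Facts~\ref{fact:02} and~\ref{fact:03}, and iterate. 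This gives the MGF bound above. Then the tail bound follows by the standard Chernoff/Markov argument used in Theorem~\ref{th:main}: for the largest eigenvalue,
\[
\Pr\l(\lmax{\sum_{j=1}^n Z_j}\geq t\r)\leq e^{-\theta t}\,\mb E\,\tr\exp\l(\theta\sum_{j=1}^n Z_j\r)\leq d\,\exp\l(-\theta t + \frac{\theta^2}{2}\sum_{j=1}^n M_j^2\r),
\]
and symmetrically for $-\lmin{\sum_j Z_j}$ by applying the same bound to $-Z_j$. Optimizing over $\theta$ by taking $\theta = t/\sum_{j=1}^n M_j^2$ yields $d\exp\l(-\frac{t^2}{2\sum_j M_j^2}\r)$ per side; combining the two tails via a union bound and $\|A\|=\max(\lmax A,-\lmin A)$ gives the factor $2d$ and, after absorbing constants, the stated bound with $8$ in place of $2$ in the denominator (the slightly weaker constant being the conventional form of matrix Hoeffding).

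I do not expect any genuine obstacle here: every tool is already in place, and the only real content is the matrix Hoeffding lemma $\mb E e^{\theta Z}\preceq e^{\theta^2 M^2/2}I$, which is a routine consequence of the scalar version applied spectrally. The one point requiring mild care is justifying the spectral lifting of the scalar Hoeffding bound to an operator inequality on $\mb E e^{\theta Z}$ — this uses that for fixed unit vector $u$, $u^\ast \mb E e^{\theta Z} u = \mb E\, u^\ast e^{\theta Z}u$, and bounding $u^\ast e^{\theta Z}u$ via Jensen applied to the spectral measure of $Z$ at $u$ together with the scalar Hoeffding inequality; since this holds for every $u$, the operator inequality follows. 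Everything else is a direct transcription of the proof of Theorem~\ref{th:main}.
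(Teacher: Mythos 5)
The paper itself does not prove this lemma---it simply cites Theorem 1.3 of \cite{tropp2012user}---so your proposal, which supplies a self-contained proof using the Lieb/Jensen peeling machinery already set up for Lemma~\ref{lemma:main}, is a reasonable alternative. The overall architecture (one-matrix Hoeffding bound $\Rightarrow$ trace-MGF bound via iterated Lieb + conditional Jensen + operator monotonicity of $\log$ $\Rightarrow$ Chernoff for $\lambda_{\max}$ $\Rightarrow$ symmetric bound for $-\lambda_{\min}$ $\Rightarrow$ union bound) is exactly the standard derivation, and your peeling step, Chernoff step, and optimization over $\theta$ are all correct. Your route actually yields $2$ rather than $8$ in the exponent denominator; Tropp's weaker constant comes from stating the theorem under the more general hypothesis $Z_k^2\preceq A_k^2$ for an operator $A_k$, whereas here $A_k=M_kI$ commutes with everything.

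The one genuine gap is your justification of the single-matrix step $\mb E\, e^{\theta Z}\preceq e^{\theta^2 M^2/2}I$. Fact~\ref{fact:01} lifts a pointwise scalar inequality $f_1\geq f_2$ to $f_1(A)\succeq f_2(A)$ for a \emph{fixed} matrix $A$; it says nothing about $\mb E f(Z)$ for random $Z$, and $\mb E\,e^{\theta Z}$ is not a matrix function of any single matrix. Your fallback argument---``Jensen applied to the spectral measure of $Z$ at $u$ together with the scalar Hoeffding inequality''---does not close the hole. For a fixed realization of $Z$, the spectral measure $\mu_u^Z$ of $Z$ at $u$ has mean $u^\ast Z u$, which is not zero, so scalar Hoeffding does not apply to it; and Jensen applied to the convex map $x\mapsto e^{\theta x}$ goes the wrong way, giving the lower bound $u^\ast e^{\theta Z}u\geq e^{\theta\,u^\ast Z u}$. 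Two correct repairs: (a) regard the pair (realization of $Z$, draw from $\mu_u^Z$) as producing one scalar random variable $\Lambda\in[-M,M]$ with $\mb E\Lambda=\mb E_Z\,u^\ast Z u=0$ and $\mb E\,e^{\theta\Lambda}=\mb E_Z\,u^\ast e^{\theta Z}u$, and apply scalar Hoeffding to $\Lambda$; or, more cleanly, (b) use the convexity chord bound $e^{\theta x}\leq\cosh(\theta M)+\frac{\sinh(\theta M)}{M}x$ valid for $x\in[-M,M]$, lift it via Fact~\ref{fact:01} to $e^{\theta Z}\preceq\cosh(\theta M)I+\frac{\sinh(\theta M)}{M}Z$ almost surely, take expectations using $\mb EZ=0$, and finish with $\cosh(a)\leq e^{a^2/2}$. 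Route (b) is Tropp's argument; route (a) is the rigorous version of what you gestured at. With either in place the rest of your proof goes through unchanged.
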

\begin{proof}
See Theorem 1.3 in \cite{tropp2012user}.
\end{proof}
We conclude this section by recalling the notion of Talagrand's generic chaining complexity (see \cite{talagrand2014upper}) and several related results. 
Given a metric space $(T,\rho)$, 
let $\left\{\Delta_n\right\}$ be a nested sequence of partitions of $T$ such that 
${\rm card}\,\Delta_0=1$ and ${\rm card}\,\Delta_n\leq 2^{2^n}$. 
For $s\in T$, let $\Delta_n(s)$ be the unique subset of $\Delta_n$ containing $s$. 
The generic chaining complexity $\gamma_2(T,\rho)$ is defined as 
\[
\gamma_2(T,\rho):=\inf_{\left\{\Delta_n\right\}}\sup_{s\in T}\sum_{n\geq 0} 2^{\frac n2}D(\Delta_n(s))
\]
where the infimum is taken over all admissible sequences of partitions and $D(A):=D(A,\rho)$ stands for the diameter of a set $A$.
The covering number $N(T,\rho,\eps)$ is defined as the smallest $N\in \mb N$ such that there exists a subset $F\subseteq T$ of cardinality $N$ with the property that for all $z\in T$, $\rho(z,F)\leq \eps$. 
Dudley's entropy integral bound (see \cite{talagrand2014upper}) states that
\begin{align}
\label{eq:dudley}
\gamma_2(T,\rho)\leq \frac{1}{2\sqrt 2-1}\int\limits_{0}^{D(T)} \sqrt{\log N(T,\rho,\eps/4)}d\eps.
\end{align}
We will say that $\mb C^{d\times d}$-valued stochastic process $\l\{ X(t), \ t\in T \r\}$ has sub-Gaussian increments with respect to the metric 
$\rho$ if for all $t_1,t_2\in \mb T$, 
\[
\Pr\l( \l\| X_{t_1}-X_{t_2} \r\|\geq s \rho(t_1,t_2) \r) \leq 2d e^{-s^2/2},
\]
where $\|\cdot\|$ is the operator norm. 
\begin{lemma}
\label{lemma:generic-chaining}
Let $(T,\rho)$ be a metric space and let $\mb C^{d\times d}$-valued stochastic process $\l\{ X(t), \ t\in T \r\}$ have sub-Gaussian increments with respect to $\rho$. 
There exists an absolute constant $C>0$ such that for any $t_0\in T$ and any $s\geq 1$, 
\[
\sup_{t\in T} \| X_t - X_{t_0}\|\leq C\l( \gamma_2(T,\rho) + \sqrt{s} D(T) \r)
\]
with probability $\geq 1 - 2d e^{-s}$. 
\end{lemma}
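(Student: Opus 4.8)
The plan is to run the standard generic chaining argument (as in Talagrand's treatment of the majorizing measure upper bound), modified only in that each one–step increment estimate is replaced by its $\mb C^{d\times d}$–valued analogue; this is the sole place where the dimensional factor $d$ enters. First I would fix an admissible sequence of partitions $\{\Delta_n\}$ of $T$ that realizes the infimum defining $\gamma_2(T,\rho)$ up to a factor $2$, choose for every $t\in T$ and $n\ge 0$ a representative $\pi_n(t)\in\Delta_n(t)$ with $\pi_0(t)\equiv t_0$ (possible since $\mathrm{card}\,\Delta_0=1$), and write, after reducing to a separable version of the process, the telescoping identity $X_t-X_{t_0}=\sum_{n\ge1}\l(X_{\pi_n(t)}-X_{\pi_{n-1}(t)}\r)$. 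Because the partitions are nested, both $\pi_n(t)$ and $\pi_{n-1}(t)$ lie in $\Delta_{n-1}(t)$, so $\rho\l(\pi_n(t),\pi_{n-1}(t)\r)\le D(\Delta_{n-1}(t))$, and at level $n$ the link $\{\pi_n(t),\pi_{n-1}(t)\}$ is determined by $\Delta_n(t)$, hence there are at most $\mathrm{card}\,\Delta_n\le 2^{2^n}$ distinct links to control.

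Next I would apply the sub-Gaussian increments hypothesis link by link. With a threshold $u_n=c\,(2^{n/2}+\sqrt s)\,D(\Delta_{n-1}(t))$, the probability that $\l\|X_{\pi_n(t)}-X_{\pi_{n-1}(t)}\r\|$ exceeds $u_n$ is at most $2d\exp\l(-\tfrac{c^2}{2}(2^{n/2}+\sqrt s)^2\r)\le 2d\,e^{-c^2s/2}\exp\l(-\tfrac{c^2}{2}2^n\r)$. A union bound over the $\le 2^{2^n}$ links at level $n$ and then over all $n\ge1$ leaves, for $c$ a sufficiently large absolute constant, total failure probability at most $2d\,e^{-s}$, since $\sum_{n\ge1}2^{2^n}e^{-c^2 2^n/2}$ converges and $e^{-c^2 s/2}\le e^{-s}$ for $s\ge1$. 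On the complementary event, summing the thresholds and using $\sum_{m\ge0}2^{m/2}D(\Delta_m(t))\le 2\gamma_2(T,\rho)$ together with $D(\Delta_m(t))\le 2^{m/2}D(\Delta_m(t))$ yields $\sup_{t\in T}\l\|X_t-X_{t_0}\r\|\le C(1+\sqrt s)\,\gamma_2(T,\rho)$.

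The genuinely delicate point is to upgrade the crude $\sqrt s\,\gamma_2(T,\rho)$ above to the sharper $\sqrt s\,D(T)$ asserted in the statement. The fix is to not distribute the $\sqrt s$–budget uniformly across all chaining scales: one truncates the chain at the level where $D(\Delta_n(t))$ has fallen below the relevant fraction of $D(T)$, absorbs the increments beyond that level into the $\gamma_2$–term (where they are already summable), and pays the $s$–dependent deviation only against the single diameter $D(T)$ of the whole index set. This is precisely the refinement carried out for scalar processes in Talagrand's book and, in the exact form $\gamma_2(T,\rho)+\sqrt s\,D(T)$, in Dirksen's work on tail bounds via generic chaining; the combinatorial bookkeeping of this truncation is what I expect to be the main obstacle. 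None of it interacts with the matrix structure: every step transfers verbatim to $\mb C^{d\times d}$–valued processes once each increment bound carries the factor $2d$ supplied by the matrix sub-Gaussian increments condition.
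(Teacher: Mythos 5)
The paper does not prove this lemma in-house: it simply cites Theorem~3.2 of Dirksen's ``Tail bounds via generic chaining,'' which is exactly the result your sketch reconstructs, so your route and the paper's are the same. Your outline of the chaining argument, the $2d$ factor coming from the matrix-valued sub-Gaussian increments, the union bound, and the observation that the naive argument only gives $C(1+\sqrt s)\gamma_2(T,\rho)$ are all correct. The one inaccuracy is in how you describe the refinement that replaces $\sqrt s\,\gamma_2$ by $\sqrt s\,D(T)$: the truncation level is \emph{not} determined by where $D(\Delta_n(t))$ drops below a fraction of $D(T)$ (a criterion that would fail if the partition refines slowly, since at a level $n$ there are up to $2^{2^n}$ links and a threshold of order $\sqrt s\,D(\Delta_{n-1}(t))$ only survives the union bound when $2^n\lesssim s$). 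In Dirksen's proof one truncates at the $s$-dependent level $l(s)\approx\log_2 s$, writes $X_t-X_{t_0}=\bigl(X_{\pi_{l(s)}(t)}-X_{t_0}\bigr)+\sum_{n>l(s)}\bigl(X_{\pi_n(t)}-X_{\pi_{n-1}(t)}\bigr)$, controls the first term by a single union bound over the $\le 2^{2^{l(s)}}\le 2^{2s}$ points at level $l(s)$ (each within distance $D(T)$ of $t_0$, giving the $\sqrt s\,D(T)$ contribution), and runs the standard $2^{n/2}D(\Delta_{n-1}(t))$ chaining only for $n>l(s)$, where the $\gamma_2$ bound and the summable tail take over. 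With that correction your sketch is a faithful account of the cited argument.
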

\begin{proof}
See Theorem 3.2 in \cite{dirksen2013tail} for a more general statement.
\end{proof}


\section{Proof of Theorem \ref{th:intdim}}
\label{proof:intdim}

Define $\phi(x)=\max(e^{x}-1,0)$ and $X_j=\psi(\theta Y_j)$. 
Proceeding as in the proof of Theorem \ref{th:main}, we get that for $s\geq 0$,
\begin{align*}
\Pr\Bigg( \lambda_{\mx}&\l( \frac{1}{\theta}\sum_{j=1}^n \l(X_j - \theta\mb E Y_j\r) \r) \geq s \Bigg)  =
\Pr\l( \phi\l( \lambda_{\mx}\l( \sum_{j=1}^n \l(X_j - \theta\mb E Y_j\r) \r)  \r) \geq \phi(\theta s) \r) \\
&
\leq \frac{1}{\phi(\theta s)}\mb E \tr \phi\l( \sum_{j=1}^n \l( X_j -\theta\mb EY_j\r) \r) 
= \frac{1}{\phi(\theta s)} \l(  \mb E \tr \exp\l( \sum_{j=1}^n \l( X_j -\theta\mb EY_j\r) \r) - I\r).
\end{align*}
It follows from Lemma \ref{lemma:main} that 
\[
\mb E \tr \exp\l( \sum_{j=1}^n \l( X_j -\theta\mb EY_j\r) \r)  \leq \tr\exp\l(  \frac{\theta^2}{2}\sum_{j=1}^n \mb EY_j^2  \r). 
\]
Set $B_n^2:=\sum_{j=1}^n \mb EY_j^2\succeq 0$, and note that 
\begin{align*}
&
\tr\l[\exp\l(  \frac{\theta^2}{2}\sum_{j=1}^n \mb EY_j^2  \r) - I \r]=
\tr\l[  \frac{\theta^2}{2}\sqrt{B_n^2}\l( I + \frac{\frac{\theta^2}{2}B_n^2}{2!}+\ldots+  \frac{\l(  \frac{\theta^2}{2} B_n^2\r)^{k-1}}{k!} +\ldots \r)\sqrt{B_n^2} \r] \\
&
\leq \tr \l[ \frac{\theta^2}{2}B_n^2\l( 1 + \frac{\frac{\theta^2}{2}\|B_n^2\|}{2!}+\ldots+  \frac{\l(  \frac{\theta^2}{2} \|B_n^2\|\r)^{k-1}}{k!} +\ldots\r)  \r]
=\frac{\tr B_n^2}{\| B_n^2 \|} \l(\exp\l( \frac{\theta^2}{2}\|B_n^2\|  \r) -1 \r).
\end{align*}
Here we have used the fact that $A\preceq B$ implies $SAS^\ast\preceq SBS^\ast$ for $S=S^\ast:=\sqrt{B_n^2}$, and the equality 
$\frac{e^x-1}{x}=\sum_{j=1}^\infty \frac{ x^{j-1} }{j!}$. 
We have shown that 
\begin{align*}
&
\Pr\Bigg( \lambda_{\mx}\l( \frac{1}{\theta}\sum_{j=1}^n \l(X_j - \theta\mb E Y_j\r) \r) \geq s \Bigg) \leq
\frac{\tr B_n^2}{\| B_n^2 \|} \frac{ \exp\l( \frac{\theta^2}{2}\|B_n^2\|  \r) -1 }{e^{\theta s}-1} \\
&
\leq  \frac{\tr B_n^2}{\| B_n^2 \|} \exp\l( \frac{\theta^2}{2}\|B_n^2\| - \theta s \r)  \frac{e^{\theta s}}{e^{\theta s}-1} 
\leq  \frac{\tr B_n^2}{\| B_n^2 \|} \exp\l( \frac{\theta^2}{2}\|B_n^2\| - \theta s \r) \l( 1 + \frac{1}{\theta s} \r), 
\end{align*}
where we used an elementary inequality $\frac{e^{\theta s}}{e^{\theta s}-1}\leq 1 +\frac{1}{\theta s}$ on the last step. 

Combining the same steps with Fact \ref{fact:04} and the equality $-\lambda_{\mn}(A)=\lambda_{\mx}(-A)$, we get
\begin{align*}
\Pr\Bigg( \lambda_{\mn}\l( \frac{1}{\theta}\sum_{j=1}^n \l(X_j - \theta\mb E Y_j\r) \r) \leq  -s \Bigg) \leq 
\frac{\tr B_n^2}{\| B_n^2 \|} \exp\l( \frac{\theta^2}{2}\|B_n^2\| - \theta s \r) \l( 1 + \frac{1}{\theta s} \r).
\end{align*}
Finally, replace $s$ by $t\sqrt n$ to get the bound in the required form.

\section{Proof of Theorem \ref{th:adaptive}}
\label{section:proof-adaptive}

Let $\m E_1$ be the event defined by 
\[
\m E_1=\l\{ \l\|  \hat T_0 - \mb EY \r\|\leq 6\sigma\sqrt{\frac{2t}{n/2}}  \r\}.
\]
By Theorem \ref{th:lepski},
\begin{align}
\label{eq:c05}
&
\Pr(\m E_1)\geq 1- 2d\log_2\l( \frac{2\sigma_{\mx}}{\sigma_{\mn}}\r)e^{-t}.
\end{align} 
Note that on this event,  
\begin{align}
\label{eq:c00}
\l\|\mb E \l[ (Y - \hat T_0)^2|\hat T_0 \r] \r\| & = \| \mb E(Y-\mb EY)^2 + (\mb EY - \hat T_0)^2 \| \leq 
\sigma_0^2 + \l(12 \sigma\sqrt{\frac t n}\r)^2.
\end{align}
In particular, on event $\m E_1$, 
\begin{align}
\label{eq:c10}
\l\|\mb E \l[ (Y - \hat T_0)^2|\hat T_0 \r] \r\|^{1/2}\leq \sigma_{0}+12 \sigma\sqrt{\frac t n} \leq \sigma_{0,\mx}+12 \sigma_{\mx}\sqrt{\frac t n}.
\end{align} 
Next,
\begin{align}
\label{eq:c20}
&\Pr\l(\l\| \hat T_1 -\mb EY \r\| \geq 6\l(\sigma_0 + 12\sigma\sqrt{\frac t n} \r)\sqrt{\frac{2t}{n/2}}  \r) \\
&\nonumber=
\Pr\l( \l\| T_{|G_2|,j_2^\ast}(\hat T_0;G_2) - (\mb EY - \hat T_0) \r\| \geq 6\l(\sigma_0 + 12\sigma\sqrt{\frac t n} \r)\sqrt{\frac{2t}{n/2}}  \r) \\
&\nonumber 
\leq
\Pr(\m E_1^c) + \Pr\l( \l\| T_{|G_2|,j_2^\ast}(\hat T_0;G_2) - (\mb EY - \hat T_0) \r\| \geq  
6\l(\sigma_0 + 12\sigma\sqrt{\frac t n} \r)\sqrt{\frac{2t}{n/2}}   \Big|\m E_1 \r).
\end{align}
Define the new probability measure by $\widetilde\Pr(A)=\Pr(A|\m E_1)$. 
Clearly, under this new measure, subsample $G_2$ is still independent of $G_1$ since $\m E_1\in \sigma(G_1)$ - the sigma-algebra generated by $G_1$, and for any $B\in \sigma(G_2)$, $\widetilde \Pr(B) = \Pr(B)$. 
Let $\tilde{\mb E}$ be the expectation with respect to measure $\widetilde \Pr(\cdot)$. 
Then
\begin{align*}
\Pr\bigg(& \| T_{|G_2|,j_2^\ast}(\hat T_0;G_2) - (\mb EY - \hat T_0) \|\geq  6\l(\sigma_0 + 12\sigma\sqrt{\frac t n} \r)\sqrt{\frac{2t}{n/2}}  \Big|\m E_1 \bigg) 
\\
&= 
\widetilde\Pr\l( \| T_{|G_2|,j_2^\ast}(\hat T_0;G_2) - (\mb EY - \hat T_0) \|\geq  6\l(\sigma_0 + 12\sigma\sqrt{\frac t n} \r)\sqrt{\frac{2t}{n/2}}  \r) \\
&\leq 
\widetilde\Pr\l( \| T_{|G_2|,j_2^\ast}(\hat T_0;G_2) - (\mb EY - \hat T_0) \|\geq  6\l\|\mb E \l[ (Y - \hat T_0)^2|\hat T_0 \r] \r\|^{1/2} \sqrt{\frac{2t}{n/2}}  \r)\\
&
=\tilde{\mb E} \widetilde \Pr\l( \l\| T_{|G_2|,j_2^\ast}(\hat T_0;G_2) - (\mb EY - \hat T_0) \r\|\geq 
6\l\|\mb E \l[ (Y - \hat T_0)^2|\hat T_0 \r] \r\|^{1/2} \sqrt{\frac{2t}{n/2}}  \Big| \hat T_0 \r) \\ 
&\leq 
1 - 2d\log_2\l(\frac{2(\sigma_{0,\mx}+12\sigma_{\mx}\sqrt{t/n})}{\sigma_{0,\mn}} \r) e^{-t}. 
\end{align*}
Here, we use the definition of $\widetilde \Pr(\cdot)$ on the first step and (\ref{eq:c10}) on the second step. 
The last inequality follows from independence of $G_2$ from $\hat T_0$ (under $\widetilde \Pr(\cdot)$) and Theorem \ref{th:lepski} applied conditionally on $\hat T_0$: indeed, this can be done since (\ref{eq:c10}) holds on $\m E_1$. 
It remains to combine the last bound with (\ref{eq:c20}) and (\ref{eq:c05}). 
\qed

\section{Proof of Theorem \ref{th:iterative}}
\label{proof:iterative}
 
We will first state several technical results that are required in the proof. 
Let $j\in \m J$ be such that $\sigma_{0,j}=2^j\sigma_{0,\mn}\geq \sigma_0$, and define 
\[
X_{j,i}(S):=\psi\l(\theta_j (Y_i - S)  \r), \ i=1,\ldots,n.
\]
Moreover, set
\begin{align*}
&
L_n(\delta,j):= 
\sup\limits_{S:\|S-\mb EY\|\leq \delta}
\l\|  \frac{1}{n\theta_j} \sum_{i=1}^n \l( X_{j,i}(S) - \mb E X_{j,i}(S) \r) - \frac{1}{n\theta_j}\sum_{i=1}^n \l( X_{j,i}(\mb EY) - \mb E X_{j,i}(\mb EY) \r)  \r\|,
\end{align*}
and define the event
\[
\Omega(\delta,j) = 
\l\{    L_n(\delta,j) \leq  K\delta\sqrt{\frac{d^2+L t}{n}}  \r\},
\]
where $K>0$ is an absolute constant. 
\begin{lemma}
\label{lemma:L_n}
For $K$ large enough, 
\[
\Pr \l( \Omega(\delta,j) \r)\geq 1-2d e^{-t}.
\]
\end{lemma}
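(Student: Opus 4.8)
The plan is to recognize $L_n(\delta,j)$ as the supremum of a $\mb C^{d\times d}$-valued empirical process indexed by $T=\{S:\|S-\mb EY\|\leq \delta\}$ and to apply the generic chaining bound of Lemma \ref{lemma:generic-chaining}. Set $X(S):=\frac{1}{n\theta_j}\sum_{i=1}^n\bigl(X_{j,i}(S)-\mb EX_{j,i}(S)\bigr)$, so that $L_n(\delta,j)=\sup_{S\in T}\|X(S)-X(\mb EY)\|$ with $\mb EY\in T$. First I would verify that $\{X(S):S\in T\}$ has sub-Gaussian increments with respect to the metric $\rho(S_1,S_2):=c\,\|S_1-S_2\|\sqrt{L/n}$ for a suitable absolute constant $c$. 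This is where Assumption \ref{assumption:2} enters: since $\psi$ is operator Lipschitz with constant $L$,
\[
X(S_1)-X(S_2)=\frac{1}{n\theta_j}\sum_{i=1}^n\Bigl[\bigl(\psi(\theta_j(Y_i-S_1))-\psi(\theta_j(Y_i-S_2))\bigr)-\mb E(\cdots)\Bigr]
\]
is a sum of independent, centered, self-adjoint matrices each bounded in operator norm by $\tfrac{2}{n\theta_j}\cdot L\theta_j\|S_1-S_2\|=\tfrac{2L}{n}\|S_1-S_2\|$; the matrix Hoeffding inequality (Lemma \ref{lemma:hoeffding}) then gives $\Pr(\|X(S_1)-X(S_2)\|\geq s\rho(S_1,S_2))\leq 2d\,e^{-s^2/2}$ after choosing $\rho$ with the right constant.

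Next I would bound the two geometric quantities appearing in Lemma \ref{lemma:generic-chaining}. The diameter is immediate: $D(T,\rho)\leq c\cdot 2\delta\sqrt{L/n}$, since $\|S_1-S_2\|\leq 2\delta$ on $T$. For $\gamma_2(T,\rho)$ I would use Dudley's bound \eqref{eq:dudley} together with the standard volumetric covering estimate for an operator-norm ball in $\mb C^{d\times d}$ (a space of real dimension $\leq 2d^2$), namely $N(T,\rho,\eps)\leq (C\delta\sqrt{L/n}/\eps)^{2d^2}$, which yields $\gamma_2(T,\rho)\lesssim d\,\delta\sqrt{L/n}$. Plugging these into Lemma \ref{lemma:generic-chaining} with $s=t\vee 1$ gives
\[
L_n(\delta,j)=\sup_{S\in T}\|X(S)-X(\mb EY)\|\leq C'\Bigl(d\,\delta\sqrt{\tfrac{L}{n}}+\sqrt{t}\cdot\delta\sqrt{\tfrac{L}{n}}\Bigr)\leq K\delta\sqrt{\tfrac{d^2+Lt}{n}}
\]
with probability $\geq 1-2d\,e^{-t}$, after absorbing constants and using $d\sqrt{L}\leq\sqrt{L}\cdot d\leq\sqrt{d^2+Lt}\cdot\sqrt{L}$ — here I would be slightly careful to route the factors so that the final bound reads $d^2+Lt$ under the square root as stated; since $d\sqrt{L/n}=\sqrt{Ld^2/n}\leq\sqrt{L(d^2+Lt)/n}$ only if $L\geq 1$, I may instead bound $\gamma_2\lesssim d\delta\sqrt{L/n}\leq\delta\sqrt{d^2L/n}$ and $\sqrt{t}D(T)\lesssim\delta\sqrt{Lt/n}$, whence the sum is $\lesssim\delta\sqrt{(d^2L+Lt)/n}$; if $L\geq1$ this is $\leq\delta\sqrt{L}\sqrt{(d^2+t)/n}$, and one can reconcile with the stated form $d^2+Lt$ by noting $d^2+t\leq d^2+Lt$ when $L\geq1$ and folding the leftover $\sqrt{L}$ into $K$ is not allowed since $L$ is not absolute — so the cleanest route is to keep $\gamma_2\lesssim\delta\sqrt{d^2L/n}$ and observe $d^2L\leq(d^2+Lt)\cdot\max(1,L/t)$; I will ultimately present it so that the bound matches $K\delta\sqrt{(d^2+Lt)/n}$ exactly as in the statement.

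The main obstacle I anticipate is precisely this bookkeeping of how $d^2$ and $L$ combine: the generic-chaining output naturally produces $\gamma_2\propto d\sqrt{L}$ and $\sqrt{t}D\propto\sqrt{Lt}$, so the honest bound is $\delta\sqrt{L}\cdot\sqrt{d^2+t}/\sqrt{n}$, and matching it to the paper's $\sqrt{(d^2+Lt)/n}$ requires either $L\geq 1$ (so that $\sqrt{L}\sqrt{d^2+t}\leq\sqrt{L^2 d^2+Lt}$, still not quite it) or a more careful covering argument that exploits the operator norm (rather than Frobenius norm) structure of $T$ to shave the entropy. A secondary technical point is that Lemma \ref{lemma:generic-chaining} requires the increments to be sub-Gaussian in the \emph{operator} norm, which the matrix Hoeffding inequality delivers directly — so no extra work is needed there, but one must confirm the constant $K$ can be chosen absolute, independent of $d$, $L$, $n$, $j$, which it can since all dependence has been pushed into the metric $\rho$ and the explicit diameter/complexity estimates. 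Once the constants are tracked, the conclusion $\Pr(\Omega(\delta,j))\geq 1-2d\,e^{-t}$ follows immediately.
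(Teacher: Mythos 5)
Your proposal follows the same route as the paper's proof in all essential respects: you recognize $L_n(\delta,j)$ as the supremum of a matrix-valued empirical process over the operator-norm ball $T$; use the operator-Lipschitz property (Assumption~\ref{assumption:2}) plus the matrix Hoeffding inequality (Lemma~\ref{lemma:hoeffding}) to verify sub-Gaussian increments in the sense required by Lemma~\ref{lemma:generic-chaining}; control $\gamma_2$ via Dudley's entropy integral~\eqref{eq:dudley} and a volumetric covering bound; and use the explicit diameter $\lesssim L\delta$. This is precisely the argument in Section~\ref{proof:L_n}, so the strategy is correct.

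On the bookkeeping point you were (rightly) uneasy about: the chaining output is $\frac{CL\delta}{\sqrt n}(C_1 d + 2\sqrt t)$, and matching this to $K\delta\sqrt{(d^2+Lt)/n}$ requires $K\gtrsim L+\sqrt L$. This is harmless here because $L$ is the operator-Lipschitz constant of the \emph{fixed, chosen} function $\psi$ (e.g.\ $\psi_1$ or $\psi_2$ from Remark~\ref{remark:psi2}); Assumption~\ref{assumption:2} only insists that $L$ be independent of $d$, so it is a universal numeric constant and can legitimately be absorbed into $K$. You do not need $L\leq 1$, nor a refined covering argument — the paper does not attempt to track the $L$-dependence optimally, and the expression $\sqrt{(d^2+Lt)/n}$ should be read with this convention in mind. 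Two small cosmetic differences from the paper: (i) the paper covers the set of Hermitian matrices as a real vector space of dimension $d^2$ (Lemma~\ref{lemma:cover}), giving $N\leq(2r/\eps+1)^{d^2}$, whereas you wrote $2d^2$ for all of $\mb C^{d\times d}$ — since $T$ consists of self-adjoint matrices, $d^2$ is the correct exponent, though either choice changes only constants; (ii) the paper keeps the metric as $\rho_d=L\|\cdot\|$ and pulls the $\frac{1}{\sqrt n}$ factor outside of Lemma~\ref{lemma:generic-chaining}, while you fold $\sqrt{L/n}$ into the metric — these are equivalent by homogeneity of $\gamma_2$ and $D$.
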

\begin{proof}
See section \ref{proof:L_n}.
\end{proof}
\begin{lemma}
\label{lemma:norm}
For any Hermitian $S$,
\[
\l\|  S- \mb EY + \frac{1}{\theta_j}\mb E X_{j,1}(S)  \r\|\leq  \frac{\theta_j}{2}\l\| \mb E(Y - S)^2 \r\|.
\]
\end{lemma}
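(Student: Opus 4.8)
The plan is to reduce the statement to a two-sided Loewner-order inequality and then read off the operator-norm bound. Write $Z := Y_1 - S$, a random self-adjoint matrix, and set $\mu := \mb E Z = \mb E Y - S$; since $Y_1$ is a copy of $Y$, $\mb E Z^2 = \mb E(Y-S)^2$, and the quantity to be estimated is exactly $-\mu + \frac{1}{\theta_j}\mb E\,\psi(\theta_j Z)$. So it suffices to prove $-\frac{\theta_j}{2}\mb E Z^2 \preceq -\mu + \frac{1}{\theta_j}\mb E\,\psi(\theta_j Z) \preceq \frac{\theta_j}{2}\mb E Z^2$.

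First I would invoke \eqref{eq:psi} together with Fact \ref{fact:01}: both $\psi(\theta_j Z)$ and $\log\l(I + \theta_j Z + \tfrac{\theta_j^2}{2}Z^2\r)$ are matrix functions of the single self-adjoint matrix $Z$, and the scalar inequality $\psi(\theta_j x)\le\log\l(1+\theta_j x+\tfrac{\theta_j^2}{2}x^2\r)$ holds for every $x\in\mb R$, so
\[
\psi(\theta_j Z) \preceq \log\l(I + \theta_j Z + \tfrac{\theta_j^2}{2}Z^2\r) \quad\text{almost surely,}
\]
which is well defined because $I + \theta_j Z + \tfrac{\theta_j^2}{2}Z^2 \succ 0$ a.s. by Fact \ref{fact:04}. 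Taking expectations preserves the Loewner order, so the next task is to bound $\mb E\,\log\l(I + \theta_j Z + \tfrac{\theta_j^2}{2}Z^2\r)$ from above. Here the key step is the operator Jensen inequality: since $t\mapsto\log t$ is operator concave on $(0,\infty)$ (see, e.g., \cite{carlen2010trace}), $\mb E[\log W] \preceq \log(\mb E W)$ for any random positive definite $W$. Applying this with $W = I + \theta_j Z + \tfrac{\theta_j^2}{2}Z^2$, and then the scalar bound $\log(1+x)\le x$ (again via Fact \ref{fact:01}, legitimate since $I + \theta_j\mu + \tfrac{\theta_j^2}{2}\mb E Z^2 = \mb E W\succ 0$), gives
\[
\mb E\,\psi(\theta_j Z) \preceq \log\l(I + \theta_j\mu + \tfrac{\theta_j^2}{2}\mb E Z^2\r) \preceq \theta_j\mu + \tfrac{\theta_j^2}{2}\mb E Z^2 .
\]
Running the identical chain for $-\psi$, using the left inequality in \eqref{eq:psi} and $I - \theta_j Z + \tfrac{\theta_j^2}{2}Z^2\succ 0$, yields $-\mb E\,\psi(\theta_j Z) \preceq -\theta_j\mu + \tfrac{\theta_j^2}{2}\mb E Z^2$.

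Dividing both displays by $\theta_j$ and combining them gives the two-sided bound above. Finally, for a self-adjoint $A$ with $-B \preceq A \preceq B$ and $B\succeq 0$ one has $\|A\|\le\|B\|$: indeed $\lambda_{\max}(A)\le\lambda_{\max}(B)=\|B\|$ and $\lambda_{\max}(-A)\le\lambda_{\max}(B)=\|B\|$ by Fact \ref{fact:02}, and $\|A\|=\max(\lambda_{\max}(A),\lambda_{\max}(-A))$. Taking $B=\tfrac{\theta_j}{2}\mb E Z^2\succeq 0$ then gives $\l\| -\mu + \tfrac{1}{\theta_j}\mb E\,\psi(\theta_j Z)\r\|\le\tfrac{\theta_j}{2}\l\|\mb E(Y-S)^2\r\|$, which is the claim. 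I expect the only delicate point to be the justification of the operator Jensen inequality for the logarithm; everything else is routine manipulation with the Loewner order and the facts already recorded in Section \ref{section:prelim}.
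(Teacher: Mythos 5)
Your proof is correct, and the end result is the same, but you take a detour the paper does not. The only place you genuinely diverge is in how you pass from the matrix logarithm to its linear upper bound: the paper applies the scalar inequality $\log(1+x)\le x$ to the \emph{random} matrix $I + \theta_j Z + \tfrac{\theta_j^2}{2}Z^2$ directly (via Fact \ref{fact:01}), obtaining $\psi(\theta_j Z)\preceq\theta_j Z+\tfrac{\theta_j^2}{2}Z^2$ almost surely, and then simply takes expectations, since the Loewner order passes trivially through $\mb E$. You instead take expectations first, which leaves you with $\mb E\log W$; to get past that you invoke the operator Jensen inequality for $\log$, i.e.\ operator concavity of the logarithm, and only then apply $\log(1+x)\le x$ to the deterministic matrix $\mb E W$. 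Both chains are valid and both terminate at $\mb E\psi(\theta_j Z)\preceq\theta_j\mu+\tfrac{\theta_j^2}{2}\mb E Z^2$. The difference is one of economy: operator concavity of $\log$ (and its Jensen form) is a genuinely deeper fact than anything the paper records in Section \ref{section:prelim}, and it is not needed here because the scalar bound already kills the $\log$ at the level of the sample. Your final passage from the two-sided Loewner bound $-B\preceq A\preceq B$ to $\|A\|\le\|B\|$ is just a repackaging of the paper's separate estimates of $\lambda_{\max}$ and $-\lambda_{\min}$, so that part is essentially identical in substance.
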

\begin{proof}
Note that
\begin{align*}
X_{j,1}(S)&=\psi\l( \theta_j(Y_1-S) \r)\preceq \log\l( I+\theta_j (Y_1 - S) +\frac{\theta_j^2}{2}(Y_1 - S)^2 \r)  \\
&\preceq 
\theta_j (Y_1 - S) +\frac{\theta_j^2}{2}(Y_1 - S)^2, 
\end{align*}
which is a consequence of scalar inequality $\log(1+x)\leq x, \ x>-1$ and fact M.2.1, 
hence we can deduce from fact M.2.2 that
\begin{align*}
&
\lambda_{\mx}\l(  S- \mb EY + \frac{1}{\theta_j}\mb E X_{j,1}(S)  \r)\leq \frac{\theta_j}{2}\l\| \mb E(Y - S)^2 \r\|.
\end{align*}
At the same time, 
\[
-\lambda_{\mn}\l(  S- \mb EY + \frac{1}{\theta_j}\mb E X_{j,1}(S)  \r) = 
\lambda_{\mx}\l( -\frac{1}{\theta_j}\mb E X_{j,1}(S) - (S - \mb EY)  \r).
\] 
Since 
\begin{align*}
-X_{j,1}(S) & = -\psi\l( \theta_j(Y_1-S) \r)\preceq
\log\l( I-\theta_j (Y_1 - S) +\frac{\theta_j^2}{2}(Y_1-S)^2 \r)
\end{align*}
by the definition of $\psi(\cdot)$, we conclude that 
\[
-\lambda_{\mn}\l(  S- \mb EY - \frac{1}{\theta_j}\mb E X_{j,1}(S)  \r) \leq \frac{\theta_j}{2}\l\| \mb E(Y - S)^2 \r\|,
\]
hence 
$\l\|  S- \mb EY + \frac{1}{\theta_j}\mb E X_{j,1}(S)  \r\|\leq  \frac{\theta_j}{2}\l\| \mb E(Y - S)^2 \r\|$.
\end{proof}
\begin{lemma}
\label{lemma:expect}
With probability $\geq 1-2de^{-t}$,
\begin{align*}
\l\| \frac{1}{n\theta_j}\sum_{i=1}^n \l( X_{j,i}(\mb EY) - \mb E \big[X_{j,i}(\mb EY)\big] \r)  \r\| 
\leq \sigma_{0,j}\sqrt{\frac{2t}{n}} + \frac{\theta_j}{2}\sigma_0^2.
\end{align*}
\end{lemma}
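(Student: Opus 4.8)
The plan is to split the quantity of interest into a purely stochastic fluctuation, which is handled directly by Theorem \ref{th:main}, and a deterministic ``bias'' term, which is controlled by Lemma \ref{lemma:norm}.

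First I would introduce the centered observations $\tilde Y_i := Y_i - \mb EY$, so that $X_{j,i}(\mb EY) = \psi(\theta_j \tilde Y_i)$, the matrices $\tilde Y_1,\ldots,\tilde Y_n$ are i.i.d., and $\mb E\tilde Y_i = 0_{d\times d}$. Using the i.i.d. property to write $\frac{1}{n\theta_j}\sum_{i=1}^n \mb E X_{j,i}(\mb EY) = \frac{1}{\theta_j}\mb E X_{j,1}(\mb EY)$, and inserting the (vanishing) term $\mb E\tilde Y_i$ for free, the triangle inequality gives
\[
\l\| \frac{1}{n\theta_j}\sum_{i=1}^n \l( X_{j,i}(\mb EY) - \mb E X_{j,i}(\mb EY)\r) \r\|
\leq \l\| \frac{1}{n}\sum_{i=1}^n \l( \frac{1}{\theta_j}\psi(\theta_j\tilde Y_i) - \mb E\tilde Y_i \r) \r\|
+ \l\| \frac{1}{\theta_j}\mb E X_{j,1}(\mb EY) \r\|.
\]

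For the first term, I would apply Theorem \ref{th:main} (in the i.i.d. form of Remark \ref{remark:iid}) to the sequence $\tilde Y_1,\ldots,\tilde Y_n$. Since the index $j$ is chosen so that $\sigma_{0,j}\geq \sigma_0 = \| \mb E(Y-\mb EY)^2\|^{1/2} = \| \mb E\tilde Y_1^2\|^{1/2}$, the value $\sigma_{0,j}$ is an admissible variance proxy for the centered data, and with the prescribed $\theta_j = \sqrt{2t/n}\,\sigma_{0,j}^{-1}$ (i.e. $s=2t$, $\sigma = \sigma_{0,j}$ in Remark \ref{remark:iid}) one gets
\[
\Pr\l( \l\| \frac{1}{n}\sum_{i=1}^n \l( \frac{1}{\theta_j}\psi(\theta_j\tilde Y_i) - \mb E\tilde Y_i\r) \r\| \geq \sigma_{0,j}\sqrt{\frac{2t}{n}} \r)\leq 2d e^{-t}.
\]
For the second, deterministic term I would invoke Lemma \ref{lemma:norm} with $S=\mb EY$: since then $S-\mb EY=0$, its conclusion reads $\l\| \frac{1}{\theta_j}\mb E X_{j,1}(\mb EY)\r\| \leq \frac{\theta_j}{2}\| \mb E(Y-\mb EY)^2\| = \frac{\theta_j}{2}\sigma_0^2$. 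Combining the two estimates on the event of probability at least $1-2de^{-t}$ yields exactly the claimed bound $\sigma_{0,j}\sqrt{2t/n} + \frac{\theta_j}{2}\sigma_0^2$.

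I do not expect a real obstacle here: the argument is essentially a bookkeeping exercise. The only point demanding care is tracking the effect of centering by $\mb EY$, which changes the relevant variance proxy from $\sigma$ to $\sigma_0$, and checking that the chosen index $j$ (with $\sigma_{0,j}\geq\sigma_0$) indeed makes $\sigma_{0,j}$ a valid upper bound for $\|\mb E\tilde Y_1^2\|^{1/2}$ so that Theorem \ref{th:main} applies with the parameter $\theta_j$ already fixed in the construction.
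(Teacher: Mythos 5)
Your proof is correct and matches the paper's argument essentially line for line: both split via the triangle inequality into the empirical term $\|\frac{1}{n\theta_j}\sum_i X_{j,i}(\mb EY)\|$ (controlled by Theorem \ref{th:main}/Remark \ref{remark:iid} applied to the centered variables $\tilde Y_i=Y_i-\mb EY$ with variance proxy $\sigma_{0,j}\geq\sigma_0$) and the deterministic bias term $\frac{1}{\theta_j}\|\mb E X_{j,1}(\mb EY)\|$ (bounded via Lemma \ref{lemma:norm} at $S=\mb EY$). The only cosmetic difference is that you make the insertion of the vanishing $\mb E\tilde Y_i$ explicit, whereas the paper writes the triangle inequality directly on the un-centered sum.
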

\begin{proof}
Result follows from Theorem M.3.1 and the inequality
\[
\l\| \frac{1}{\theta_j} \mb E X_{j,1}(\mb EY) \r\|\leq \frac{\theta_j}{2}\sigma_0^2,
\] 
which is a consequence of lemma \ref{lemma:norm}. 
Indeed,
\begin{align*}
\l\| \frac{1}{n\theta_j}\sum_{i=1}^n \l( X_{j,i}(\mb EY) - \mb E \big[ X_{j,i}(\mb EY) \big] \r)  \r\| &\leq 
\l\| \frac{1}{n\theta_j}\sum_{i=1}^n X_{j,i}(\mb EY) \r\| + 
\frac{1}{\theta_j}\l\|  \mb E X_{j,1}(\mb EY) \r\| \\
&
\leq \sigma_{0,j}\sqrt{\frac{2t}{n}} + \frac{\theta_j}{2}\sigma_0^2
\end{align*}
with probability $\geq 1-2de^{-t}$. 
\end{proof}
\noindent 
We are ready to proceed with the proof of the theorem. 
Let 
\[
\m E_0=\l\{ \l\| T_{n}^{(0)} - \mb EY \r\|\leq \sigma_{\mx}\sqrt{\frac{2t}{n}} \r\},
\] 
where $T_n^{(0)}$ was defined in M.6.2 as $T_n^{(0)} = \frac{1}{n\theta}\sum_{i=1}^n \psi\l( \theta Y_i \r)$, 
and note that $\Pr(\m E_0)\geq 1-2d e^{-t}$ by Theorem M.3.1.
Let 
\begin{align}
\label{eq:k_max}
k_{\max}&=1+\max\l\{ k\geq 0: \ \sigma_{0,\mn} 1.1^k \leq \frac{12}{5} \sigma_{\mx} \r\}, \\
\nonumber
\gamma_l &= 1.1^{l}\sigma_{0,\mn}\sqrt{\frac{2t}{n}}, \  l\geq 0,
\end{align} 
and note that $k_{\max}\leq 1+\l\lfloor  \frac{\log_2 \l(  \frac{12\sigma_{\mx}}{5\sigma_{0,\mn}}\r)}{\log_2 1.1} \r\rfloor \leq 1+8\log_2 \l(  \frac{12\sigma_{\mx}}{5\sigma_{0,\mn}}\r)$.
Define
\[
\Omega_j:= \l\{  \l\| \frac{1}{n\theta_j}\sum_{i=1}^n \l( X_{i,\mb EY} - \mb E X_{i,\mb EY} \r)  \r\| 
\leq \sigma_{0,j}\sqrt{\frac{2t}{n}} + \frac{\theta_j}{2}\sigma_0^2 \r\} \cap
\bigcap_{l=0}^{k_{\max}} \Omega\l(\gamma_l,j\r).
\]
By Lemma \ref{lemma:L_n}, Lemma \ref{lemma:expect} and the union bound, $\Pr\l( \Omega_j \r) \geq 1 - 2d(2+k_{\max})e^{-t}$. 
We will now show by induction that on the event $\m E_0 \cap \Omega_j$, 
$\l\| T_{n,j}^{(k)} - \mb EY \r\|\leq \delta_j^{(k)}$ for all $k\geq 0$. 
For $k=0$, result follows from the definition of $\m E_0$. 
In remains to complete the induction step $k-1\mapsto k$. 
Note that when $\l\{ \l\| T_{n,j}^{(k-1)} - \mb EY \r\|\leq \delta_j^{(k-1)} \r\}$ occurs, we have
\begin{align}
\nonumber
&
\l\| T^{(k)}_{n,j} - \mb EY \r\| = 
\l\|  T^{(k-1)}_{n,j} - \mb EY +\frac{1}{n\theta_j} \sum_{i=1}^n \psi\l(\theta_j (Y_i-  T^{(k-1)}_{n,j}) \r) \r\| \\
&
\label{eq:00}
\leq \sup_{S: \|S-\mb EY\|\leq \delta^{(k-1)}_{j}} 
\l\|  S - \mb EY + \frac{1}{n\theta_j} \sum_{i=1}^n \psi\l( \theta_j (Y_i - S) \r) \r\|.
\end{align}
Expression under the supremum in \eqref{eq:00} can be decomposed as follows: 
\begin{align*}
&
S - \mb EY + \frac{1}{n\theta_j} \sum_{i=1}^n \psi\l( \theta_j (Y_i - S) \r) = \\
&
S- \mb EY + \frac{1}{\theta_j}  \mb E X_{j,1}(S) 
+\frac{1}{n\theta_j} \sum_{i=1}^n \l( X_{j,i}(S) - \mb E X_{j,i}(S) \r)  \\ 
& 
- \frac{1}{n\theta_j}\sum_{i=1}^n \l( X_{j,i}(\mb EY) - \mb E X_{j,i}(\mb EY) \r)  
+ \frac{1}{n\theta_j}\sum_{i=1}^n \l( X_{j,i}(\mb EY) - \mb E X_{j,i}(\mb EY) \r).
\end{align*}
We will treat 3 terms separately: first, it follows from Lemma \ref{lemma:norm} that on $\Omega_j$
\begin{align}
\label{eq:b00}
\sup_{S:\|S-\mb EY\|\leq \delta^{(k-1)}_{j}} \l\|  S- \mb EY + \frac{1}{\theta_j}\mb E X_{1,S}  \r\| \leq \frac{\theta_j}{2}\l( \sigma_0^2+\l(\delta^{(k-1)}_j\r)^2\r).
\end{align}
Next, 
\begin{align}
\label{eq:b10}
\l\| \frac{1}{n\theta_j}\sum_{i=1}^n \l( X_{i,\mb EY} - \mb E X_{i,\mb EY} \r)  \r\| 
\leq \sigma_{0,j}\sqrt{\frac{2t}{n}} + \frac{\theta_j}{2}\sigma_0^2,
\end{align}
once again by the definition of $\Omega_j$. 
Let $\tilde l=\min\l\{l\geq 0: \ \gamma_l \geq \delta^{(k-1)}_j \r\}$ (where $\gamma_l$ was defined in \eqref{eq:k_max}), and note that $\tilde l\leq k_{\max}$ and $\gamma_{\tilde l}\leq 1.1\delta^{(k-1)}_j$.
We bound the third term as
\begin{align}
\label{eq:b20}
&
\sup_{S:\|S-\mb EY\|\leq\delta^{(k-1)}_j} 
\l\| \frac{1}{n\theta_j} \sum_{i=1}^n \l( X_{j,i}(S) - \mb E X_{j,i}(S) \r) - \frac{1}{n\theta_j}\sum_{i=1}^n \l( X_{j,i}(\mb EY) - \mb E X_{j,i}(\mb EY) \r) \r\| \\ & \nonumber
= L_n\l( \delta^{(k-1)}_j,j \r)
\leq  L_n\l( \gamma_{\tilde l},j \r) \leq K\gamma_{\tilde l}\sqrt{\frac{d^2+L t}{n}}\leq 1.1 K  \delta^{(k-1)}_j \sqrt{\frac{d^2+L t}{n}}.
\end{align}
Putting the bounds \eqref{eq:b00},\eqref{eq:b10},\eqref{eq:b20} together, we can estimate the supremum in (\ref{eq:00}) as
\begin{align}
\nonumber
&
\sup_{S: \|S-\mb EY\|\leq  \delta^{(k-1)}_j} 
\l\|  S - \mb EY +\frac{1}{n\theta_j} \sum_{i=1}^n \psi\l( \theta_j (Y_i - S) \r) \r\| \\
& \nonumber 
\leq 
\frac{\theta_j}{2}\l(\sigma_0^2+\l(  \delta^{(k-1)}_j \r)^2 \r) + \frac{\theta_j}{2}\sigma_0^2 + 
\sigma_{0,j} \sqrt{\frac{2t}{n}} +  \delta^{(k-1)}_j\cdot 1.1 K\sqrt{\frac{d^2+L t}{n}} \\
&
\label{eq:b30} 
\leq \l(\sigma_0+\sigma_{0,j}\r)\sqrt{\frac{2t}{n}}+ \delta^{(k-1)}_j\l(1.1 K \sqrt{\frac{d^2+L t}{n}} + \sqrt{\frac{2t}{n}}\frac{1}{2\sigma_{0,j}} \r).
\end{align}
Note that we have used bounds $\theta_j \sigma_0^2\leq \sigma_0\sqrt{\frac{2t}{n}}$ and $\theta_j\l(  \delta^{(k-1)}_j \r)^2\leq \theta_j \delta^{(k-1)}_j$ (indeed, inequality \eqref{eq:e_k} implies that $\delta_j^{(m)}\leq 1$ for all $j$ and $m$) to get the second inequality above. 
Since $j$ was chosen such that $\sigma_{0,j}\geq \sigma_0$ and $\tau = 1.1 K \sqrt{\frac{d^2+L t}{n}} + \sqrt{\frac{2t}{n}}\frac{1}{2\sigma_0}\leq \frac{1}{6}$ by assumption, we have shown that 
\[
\l\| T^{(k)}_{n,j} - \mb EY \r\| \leq 2\sigma_{0,j}\sqrt{\frac{2t}{n}} + \frac{1}{6}\delta_j^{(k-1)}=\delta_j^{(k)},
\]
where the last equality follows from the fact that the sequence $\delta_j^{(k)}$ defined in \eqref{eq:delta_k} satisfies the recursive relation 
\[
\delta_j^{(0)} = \sigma_{\mx}\sqrt{\frac{2t}{n}}, \quad 
\delta_j^{(k)} = 2\sigma_{0,j}\sqrt{\frac{2t}{n}} + \frac{1}{6}\delta_j^{(k-1)}.
\]
To complete the proof, it is enough to follow the steps of the proof Theorem \ref{th:lepski} applied to the collection of estimators 
$\l\{ T_{n,j}^{(k)}: \ j\in \m J\r\}$: 
first, let $\bar j=\min\l\{ j\in \m J: \sigma_{0,j}\geq \sigma_0\r\}$, and note that the event 
\[
\m E_0\cap\bigcap_{j\geq \bar j, \ j\in \m J}\Omega_j
\]
has probability $\geq 1 - 8d\l( 1 + 2\log_2\l( \frac{12\sigma_{\mx}}{5\sigma_{0,\mn}}\r)\r)\log_2\l( \frac{2\sigma_{0,\mx}}{\sigma_{0,\mn}} \r) e^{-t}$. 
Moreover, on this event $j_k^\ast\leq \bar j$, hence 
\begin{align*}
\l\| \hat T_k - \mb EY \r\| & 
= \l\| T_{n,j_k^\ast}^{(k)} -\mb EY \r\|  
\leq \l\| T_{n,j_k^\ast}^{(k)} - T_{n,\bar j}^{(k)} \r\| + \l\| T_{n,\bar j}^{(k)} -\mb EY\r\| \\
&
\leq 3\delta_{\bar j}^{(k)}\leq 3\l[ (1-6^{-k})\frac{24}{5}\sigma_{0}\sqrt{\frac{2t}{n}} + 6^{-k}\sigma_{\mx}\sqrt{\frac{2t}{n}} \r],
\end{align*}
where we used the fact that $\sigma_{0,\bar j}\leq 2\sigma_0$ in the last inequality.
\qed

\subsection{Proof of Lemma \ref{lemma:L_n}}
\label{proof:L_n}
 
To this end, we will use a chaining argument. 
Recall that the function $\psi(\cdot)$ is operator Lipschitz with Lipschitz constant $L$ by assumption. 
Recall that $X_{j,i}(S):=\psi\l(\theta_j (Y_i - S)  \r), \ i=1,\ldots,n$.
It follows from Assumption \ref{ass:2} (see also Lemma \ref{lemma:lipschitz}) that for any Hermitian $S_1,S_2$ and $1\leq i\leq n$,
\begin{align*}
\| X_{i,S_1} - X_{i,S_2} \| &
= \|  \psi(\theta_j(Y_i-S_1)) - \psi(\theta_j(Y_i - S_2)  \| \leq 
L\theta_j \|S_1 - S_2 \|.
\end{align*}
Matrix Hoeffding's inequality (Lemma \ref{lemma:hoeffding}) applies with 
\[
Z_i = \frac{1}{n\theta_j}\l( (X_{i,S_1}- \mb E X_{i,S_1}) - (X_{i,S_2}- \mb E X_{i,S_2})  \r), \ i=1,\ldots,n,
\] 
and
$M_i = \frac{2 L}{n}\| S_1 - S_2 \|$, and yields that
\[
\l\|  \sum_{i=1}^{n} Z_i  \r\| \leq L\sqrt{32}\| S_1 - S_2 \| \sqrt{\frac{s}{n}}
\]
with probability $\geq 1-2d e^{-s}$. 
\begin{lemma}[Covering number in the operator norm]
\label{lemma:cover}
Let $B(r)$ be the ball of radius $r>0$ in $\mb R^{d^2}$ with respect to the operator norm $\|\cdot\|$, centered at $0$. 
Then the covering number $N(B(r),\eps):=N(B(r),\|\cdot\|,\eps)$ satisfies
\[
N(B(r),\eps)\leq \l(\frac{2r}{\eps}+1\r)^{d^2}.
\]
\end{lemma}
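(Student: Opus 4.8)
The plan is to use the standard volumetric (packing) argument. First I would fix $\eps>0$ and pick a maximal $\eps$-separated subset $\{S_1,\dots,S_M\}$ of $B(r)$, that is, a subset with $\|S_i-S_k\|>\eps$ for all $i\neq k$ which cannot be enlarged while retaining this property. Such a set exists and is finite, because $B(r)$ is a bounded subset of the finite-dimensional space $\mb R^{d^2}$ equipped with a norm. By maximality, $\{S_1,\dots,S_M\}$ is automatically an $\eps$-net of $B(r)$: every $S\in B(r)$ lies within operator-norm distance $\eps$ of some $S_i$, since otherwise $S$ could be adjoined to the set. Hence $N(B(r),\eps)\le M$, and it remains to bound $M$.

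Next I would compare Lebesgue volumes. Since the centers $S_1,\dots,S_M$ are more than $\eps$ apart, the open balls $\l\{S:\|S-S_i\|<\eps/2\r\}$, $i=1,\dots,M$, are pairwise disjoint; and since $\|S_i\|\le r$ for each $i$, all of them are contained in $\l\{S:\|S\|<r+\eps/2\r\}$. Writing $\mathrm{vol}(\cdot)$ for Lebesgue measure on $\mb R^{d^2}$ and $V$ for the volume of the unit operator-norm ball, the homogeneity of the norm together with translation invariance of Lebesgue measure gives $\mathrm{vol}\l(\l\{S:\|S-S_0\|<\rho\r\}\r)=\rho^{d^2}V$ for every center $S_0$ and radius $\rho>0$. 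Putting these facts together, one obtains
\[
M\l(\frac{\eps}{2}\r)^{d^2}V=\sum_{i=1}^M\mathrm{vol}\l(\l\{S:\|S-S_i\|<\eps/2\r\}\r)\le\mathrm{vol}\l(\l\{S:\|S\|<r+\eps/2\r\}\r)=\l(r+\frac{\eps}{2}\r)^{d^2}V,
\]
whence $M\le\l((r+\eps/2)/(\eps/2)\r)^{d^2}=\l(\frac{2r}{\eps}+1\r)^{d^2}$, which is the claimed bound. When $\eps\ge 2r$ the right-hand side is already $\ge 1$ and a single point covers $B(r)$, so the inequality holds trivially in that range too.

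There is no real obstacle here. The only points that deserve a word of care are that the unit ball of the operator norm on $\mb R^{d^2}$ has finite and strictly positive Lebesgue measure — which holds because on a finite-dimensional space all norms are equivalent to the Euclidean one, so this ball contains and is contained in Euclidean balls of positive, finite radius — and that a maximal $\eps$-separated set is finite, which follows for the same reason. Everything else is the elementary volume comparison above.
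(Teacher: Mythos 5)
Your proof is correct and follows essentially the same volumetric argument as the paper: the paper cites the standard bound $N(A,\eps)\le |A+B(\eps/2)|/|B(\eps/2)|$, which is exactly the packing estimate you prove directly from a maximal $\eps$-separated set, and then evaluates the volume ratio via an explicit integral formula for the operator-norm ball, whereas you obtain the same ratio more economically from the $d^2$-homogeneity of Lebesgue measure under dilation. The two routes are the same idea; yours simply makes the cited lemma self-contained and skips the unnecessary explicit volume formula.
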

\begin{proof}
It is well known \cite{VershyninNon-Asymptotic-00} that 
\[
N( A,\eps)\leq  \frac{\l| A + B(\eps/2) \r|}{\l| B(\eps/2) \r|},
\] 
where $|C|$ denotes the Lebesgue measure of a set $C$, and $A+C$ stands for the Minkowski sum of the sets $A$ and $C$. 
For $A=B(r)$, we get $N(B(r),\eps)\leq \frac{\l| B(r+\eps/2) \r|}{\l| B(\eps/2) \r|}$. 
The volume of the unit ball is given by 
\[
|B(r)|=c_d \int\limits_{[-r,r]^d} \prod\limits_{1\leq i<j\leq d}\l| x_i^2 - x_j^2 \r| dx_1\ldots dx_d,
\]
where $c_d=d! 4^{-d}\l(\prod_{j=1}^d v_j^2\r)^2$ and $v_j$ is the volume of the Euclidean unit ball in $\mb R^j$. 
From here, it is easy to see that 
\[
\frac{\l| B(r+\eps/2) \r|}{\l| B(\eps/2) \r|}= \l(  \frac{2r}{\eps} +1 \r)^{d^2}.
\]
\end{proof}

\noindent Let $T(\delta_{k-1}):=\l\{ S\in \mb C^{d\times d}: \ \|S-\mb EY\|\leq \delta_{k-1}  \r\}$, and define the metric 
\[
\rho_d(S_1,S_2):=L\| S_1-S_2 \|, \ S_1,S_2\in \mb C^{d\times d}.
\] 
Viewing $S\mapsto \frac{1}{n\theta_j}\sum_{i=1}^n (X_{i,S}- \mb E X_{i,S})$ as a $\mb C^{d\times d}$-valued stochastic process indexed by the elements of the metric space $(T(\delta_{k-1}),\rho_d)$, we can apply Lemma \ref{lemma:generic-chaining} which implies that there exists an absolute constant $C>0$ such that for any $t\geq 1$,	
\begin{align}
\label{eq:chain}
L_n(\delta_{k-1})\leq \frac{C}{\sqrt n}\l(  \gamma_2(T(\delta_{k-1}),\rho_d) + \sqrt t D(T(\delta_{k-1}),\rho_d) \r)
\end{align}
with probability $\geq 1-2de^{-t}$.
Recall the Dudley's entropy integral bound (\ref{eq:dudley}):
\[
\gamma_2(T(\delta_{k-1}),\rho_d)\leq \frac{1}{2\sqrt 2-1}\int\limits_{0}^{D(T(\delta_{k-1}),\rho_d)} \sqrt{\log N(T(\delta_{k-1}),\rho_d,\eps/4)}d\eps.
\]
Noting that $D(T(\delta_{k-1}),\rho_d)=2L \, \delta_{k-1}$ and combining Dudley's bound with the estimate of Lemma \ref{lemma:cover}, we get 
\[
\gamma_2(T(\delta_{k-1}),\rho_d)\leq C_1 \, L \,\delta_{k-1} d, 
\]
where $C_1=\frac{2}{2\sqrt 2-1}\int_0^1 \log^{1/2}(1+4/\eps)d\eps$. 
Bound (\ref{eq:chain}) implies that with probability $\geq 1-2de^{-t}$, 
\begin{align}
\label{eq:b120}
&
L_n(\delta_{k-1})\leq \frac{C}{\sqrt n}\l(   C_1 L \delta_{k-1} d + 2 L \delta_{k-1}\sqrt{t}  \r)
\leq \delta_{k-1} \cdot K\sqrt{\frac{d^2+L t}{n}}
\end{align}
for some absolute constant $K>0$.

\end{document}